\def\pathconnected{path-connected}
\def\bigexpecin#1#2{\E^{\mskip1.5mu #1}\bigl\{\mskip-1mu#2\mskip-1mu\bigr\}}   
\def\Norm#1{\norm{#1}_2}
\begin{document}   


\title{The Eyring--Kramers law \\
for potentials with nonquadratic saddles}
\author{Nils Berglund and Barbara Gentz}
\date{}   

\maketitle

\begin{abstract}
\noindent
The Eyring--Kramers law describes the mean transition time of an overdamped
Brownian particle between local minima in a potential landscape. In the
weak-noise limit, the transition time is to leading order exponential in  the
potential difference to overcome. This exponential is corrected by a prefactor
which depends on the principal curvatures of the potential at the starting
minimum and at the highest saddle crossed by an optimal transition path. The
Eyring--Kramers law, however, does not hold whenever one or more of these
principal
curvatures vanishes, since it would predict a vanishing or infinite transition
time. We derive the correct prefactor up to multiplicative errors that tend to
one in the zero-noise limit. As an illustration, we discuss the case of a
symmetric pitchfork bifurcation, in which the prefactor can be expressed in
terms of modified Bessel functions, as well as bifurcations with two vanishing
eigenvalues. The corresponding transition times are studied in a full
neighbourhood of the bifurcation point. These results extend work by Bovier,
Eckhoff,
Gayrard and Klein~\cite{BEGK},
who rigorously analysed the case of quadratic saddles, using
methods from potential theory. 
\end{abstract}

\leftline{\small{\it Date.\/} April 7, 2009. {\it Revised.\/} October 29, 2009.}
\leftline{\small 2000 {\it Mathematical Subject Classification.\/} 
60J45, 31C15 (primary), 60J60, 37H20 (secondary)}
\noindent{\small{\it Keywords and phrases.\/}
Stochastic differential equations, 
exit problem, 
transition times,
most probable transition path,
large deviations, 
Wentzell-Freidlin theory, 
meta\-stability, 
potential theory, 
capacities,
subexponential asymptotics,
pitchfork bifurcation.
}  

\tableofcontents
\newpage


\section{Introduction}
\label{sec_intro}

Consider the stochastic differential equation 
\begin{equation}
\label{intro1}
\6x_t = -\nabla V(x_t)\,\6t + \sqrt{2\eps} \,\6W_t\;,
\end{equation}
where $V:\R^d\to\R$ is a confining potential. The Eyring--Kramers law
(\cite{Eyring,Kramers}) describes the expected transition time $\tau$ between
potential minima in the small-noise limit $\eps\to0$. In the one-dimensional
case ($d=1$), it has the following form. Assume $x$ and $y$ are quadratic local
minima of $V$, separated by a unique quadratic local maximum $z$. Then the
expected transition time from $x$ to $y$ satisfies
\begin{equation}
\label{intro2}
\bigexpecin{x}{\tau} \simeq
\frac{2\pi}{\sqrt{V''(x)\abs{V''(z)}}} \e^{[V(z)-V(x)]/\eps}\;.
\end{equation}
In the multidimensional case ($d\geqs2$), assume the local minima are separated
by a unique saddle $z$, which is such that the Hessian $\hessian{V(z)}$ admits a
single negative eigenvalue $\lambda_1(z)$, while all other eigenvalues are
strictly positive. Then the analogue of~\eqref{intro2} reads 
\begin{equation}
\label{intro3}
\bigexpecin{x}{\tau} \simeq
\frac{2\pi}{\abs{\lambda_1(z)}}
\sqrt{\frac{\det(\hessian{V}(z))}{\det(\hessian{V}(x))}}
\e^{[V(z)-V(x)]/\eps}\;.
\end{equation}
This expression has been generalised to situations where there are several
alternative saddles allowing to go from $x$ to $y$, and to potentials with more
than two minima. 

A long time has elapsed between the first presentation of the
formula~\eqref{intro3} by Eyring \cite{Eyring} and Kramers~\cite{Kramers} and
its rigorous mathematical proof (including a precise definition of what the
symbol \lq\lq$\simeq$\rq\rq\ in~\eqref{intro3} actually means). The exponential
asymptotics were proved to be correct by Wentzell and Freidlin in the early
Seventies, using the theory of large deviations~\cite{VF69,VF70,FW}. While being
very flexible, and allowing to study more general than gradient systems
like~\eqref{intro1}, large deviations do not allow to obtain the prefactor of
the transition time. An alternative approach is based on the fact that mean
transition times obey certain elliptic partial differential equations, whose
solutions can be approximated by WKB-theory (for a recent survey of these
methods, see~\cite{Kolokoltsov00}). This approach provides formal asymptotic
series expansions in $\eps$, whose justification is, however, a difficult
problem of analysis. A framework for such a rigorous justification is provided
by microlocal analysis, which was primarily developed by Helffer and Sj\"ostrand
to solve quantum mechanical tunnelling problems in the semiclassical
limit~\cite{HelfferSjostrand1,HelfferSjostrand2,HelfferSjostrand3,
HelfferSjostrand4}. Unfortunately, it turns out that when translated into terms
of semiclassical analysis, the problem of proving the Eyring--Kramers formula
becomes a particularly intricate one, known as \lq\lq tunnelling through
non-resonant wells\rq\rq. The first mathematically rigorous proof
of~\eqref{intro3} in arbitrary dimension (and its generalisations to more than
two wells) was obtained by Bovier, Eckhoff, Gayrard and Klein~\cite{BEGK}, using
a different approach based on potential theory and a variational principle.
In~\cite{BEGK}, the Eyring--Kramers law is shown to hold with $a\simeq b$
meaning $a=b(1+\Order{\eps^{1/2}\abs{\log\eps}})$. Finally, a full
asymptotic expansion of the prefactor in powers of $\eps$ was proved to hold
in~\cite{HelfferKleinNier04,HelfferNier05}, using again analytical methods.

In this work, we are concerned with the case where the determinant of one of the
Hessian matrices vanishes. In such a case, the expression~\eqref{intro3} either
diverges or goes to zero, which is obviously absurd. It seems reasonable (as
has been pointed out, e.g., in~\cite{Stein04}) that one has to take
into account higher-order terms of the Taylor expansion of the potential at the
stationary points when estimating the transition time. Of course, cases with
degenerate Hessian are in a sense not generic, so why should we care about this
situation at all? The answer is that as soon as the potential depends on a
parameter, degenerate stationary points are bound to occur, most noteably at
bifurcation points, i.e., where the number of saddles varies as the parameter
changes. See, for instance,~\cite{BFG06a,BFG06b} for an analysis of a naturally
arising parameter-dependent system displaying a series of symmetry-breaking
bifurcations. For this particular system, an analysis of the subexponential
asymptotics of metastable transition times in the synchronisation regime has
been given recently in~\cite{BarretBovier2007}, with a careful control of the
dimension-dependence of the error terms.

In order to study sharp asymptotics of expected transition times, we rely on the
potential-theoretic approach developed in~\cite{BEGK,BGK}. In particular, the
expected transition time can be expressed in terms of so-called Newtonian
capacities between sets, which can in turn be estimated by a variational
principle involving Dirichlet forms. The main new aspect of the present work is
that we estimate capacities in cases involving nonquadratic saddles.

In the non-degenerate case, saddles are easy to define: they are stationary
points at which the Hessian has exactly one strictly  negative eigenvalue, all
other eigenvalues being strictly positive. When the determinant of the Hessian
vanishes, the situation is not so simple, since the nature of the stationary
point depends on higher-order terms in the Taylor expansion. We thus start, in
Section~\ref{sec_saddles}, by defining and classifying saddles in degenerate
cases. In Section~\ref{sec_capacity}, we estimate capacities for the most
generic cases, which then allows us to derive expressions for the expected
transition times. In Section~\ref{sec_bif}, 
we extend these results to a number of bifurcation scenarios arising in typical 
applications, that is, we consider parameter-dependent potentials for parameter
values in a full neighbourhood of a critical parameter value yielding
non-quadratic saddles. Section~\ref{sec_proofs} contains the proofs of the main
results. 


\bigskip
\noindent
{\bf Acknowledgements:}
We would like to thank Bastien Fernandez for helpful discussions and Anton
Bovier for providing a preliminary version of~\cite{BarretBovier2007}.
BG thanks the MAPMO, Orl\'eans, and NB the CRC 701 {\it Spectral  
Structures and Topological Methods in Mathematics\/} at the  
University of Bielefeld, for kind hospitality.
Financial support by the French Ministry of Research, by way of the  
{\it \mbox{Action} Concert\'ee Incitative (ACI) Jeunes Chercheurs,
Mod\'elisation stochastique de syst\`emes hors \'equilibre\/}, and the  
German Research Council (DFG), by way of the CRC 701 {\it 
Spectral Structures and Topological Methods in Mathematics\/}, is  
gratefully acknowledged. 


\section{Classification of nonquadratic saddles}
\label{sec_saddles}

We consider a continuous, confining potential $V:\R^d\to\R$, bounded below by
some $a_{0}\in \R$ and having exponentially tight level sets, that is, 
\begin{equation}
\label{saddle0}
\int_{\setsuch{x\in\R^d}{V(x)\geqs a}} \e^{-V(x)/\eps} \,\6x 
\leqs C(a)\e^{-a/\eps}
\qquad
\forall a\ge a_{0}\;,
\end{equation}
with $C(a)$ bounded above and uniform in $\eps\leqs 1$. 
We start by giving a topological definition of saddles, before classifying
saddles for sufficiently differentiable potentials $V$.


\subsection{Topological definition of saddles}
\label{ssec_stop}

We start by introducing the notion of a gate between two sets $A$ and $B$.
Roughly speaking, a \defwd{gate}\/ is a set that cannot be avoided by those
paths
going from $A$ to $B$ which stay as low as possible in the potential
landscape. \defwd{Saddles}\/ will then be defined as particular points in
gates. 

It is useful to introduce some terminology and notations:

\begin{itemiz}
\item For $x, y\in\R^d$, we denote by $\gamma: x\to y$ a \defwd{path}\/ from $x$
to $y$, that is, a continuous function $\gamma\colon [0,1]\to\R^d$ such that
$\gamma(0)=x$ and $\gamma(1)=y$.

\item	The \defwd{communication height}\/ between $x$ and $y$ is the highest
potential value no path leading from $x$ to $y$ can avoid reaching, even when
staying as low as possible, i.e., 
\begin{equation}
\label{stop1}
\Vbar(x,y) = \inf_{\gamma\colon x\to y} \sup_{t\in[0,1]} V(\gamma(t))\;.
\end{equation}
Note that $\Vbar(x,y)\geqs V(x)\vee V(y)$, with equality holding, for instance,
in cases where $x$ and $y$ are \lq\lq on the same side of a mountain
slope\rq\rq.

\item	The communication height between two sets $A, B\subset\R^d$ is given by 
\begin{equation}
\label{stop2}
\Vbar(A,B) = \inf_{x\in A, y\in B} \Vbar(x,y)\;.
\end{equation}
We denote by $\cG(A,B)=\setsuch{z\in\R^d}{V(z)=\Vbar(A,B)}$ the level set of
$\Vbar(A,B)$.

\item	The \defwd{set of minimal paths}\/ from $A$ to $B$ is 
\begin{equation}
\label{stop3}
\cP(A,B) = \Bigsetsuchvert{\gamma\colon x\to y}{x\in A, y\in B, \sup_{t\in[0,1]}
V(\gamma(t)) = \Vbar(A,B)}\;.
\end{equation}

\end{itemiz}

The following definition is taken from~\cite{BEGK}.

\begin{definition}
\label{def_stop1}
A \defwd{gate} $G(A,B)$ is a minimal subset of $\cG(A,B)$ such that all minimal
paths $\gamma\in\cP(A,B)$ must intersect $G(A,B)$.
\end{definition}

\begin{figure}
\centerline{\includegraphics*[clip=true,width=140mm]{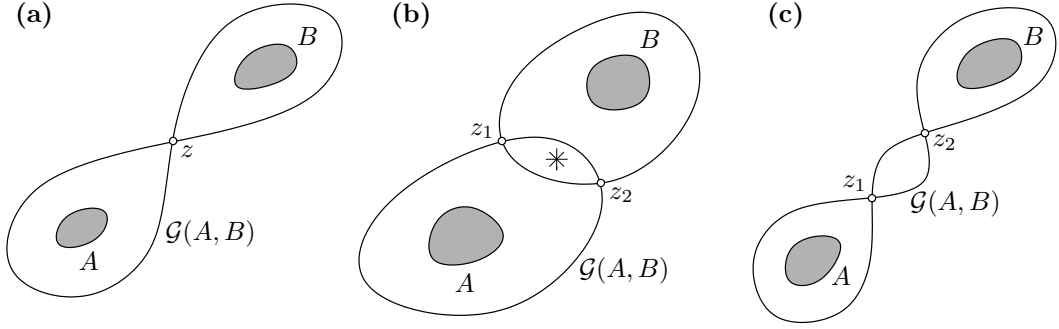}}
 \figtext{ 
	\writefig	0.5	4.5	{\bf (a)}
	\writefig	1.35	1.2	$A$
	\writefig	4.25	4.2	$B$
	\writefig	2.7	2.7	$z$
	\writefig	2.5	1.6	$\cG(A,B)$
	\writefig	5.5	4.5	{\bf (b)}
	\writefig	6.35	0.9	$A$
	\writefig	8.8	4.15	$B$
	\writefig	6.55	3.0	$z_1$
	\writefig	8.4	2.15	$z_2$
	\writefig	8.0	1.1	$\cG(A,B)$
	\writefig	10.5	4.5	{\bf (c)}
	\writefig	11.35	1.0	$A$
	\writefig	13.9	4.2	$B$
	\writefig	11.5	2.3	$z_1$
	\writefig	12.7	2.85	$z_2$
	\writefig	12.4	2.0	$\cG(A,B)$
 }
 \vspace{2mm}
\caption[]{Examples of potentials and gates. {\bf (a)} $G(A,B)=\set{z}$. 
{\bf (b)} $G(A,B)=\set{z_1,z_2}$. 
{\bf (c)} $G(A,B)=\set{z_1}$ or $\set{z_2}$.
Here curves show level lines, shaded areas indicate potential wells and the star
marks a potential maximum. 
}
\label{fig_gates}
\end{figure}

Let us consider some examples in dimension $d=2$ (\figref{fig_gates}):
\begin{itemiz}
\item	In uninteresting cases, e.g.\ for $A$ and $B$ on the same side of a
slope, the gate $G(A,B)$ is a subset of $A\cup B$. We will not be concerned
with such cases. 
\item	If on the way from $A$ to $B$, one has to cross one \lq\lq mountain
pass\rq\rq\ $z$ which is higher than all other passes, then $G(A,B)=\set{z}$
(\figref{fig_gates}a).
\item	If there are several passes at communication height $\Vbar(A,B)$
between $A$ and $B$, between which one can choose, then the gate $G(A,B)$ is
the union of these passes (\figref{fig_gates}b).
\item	If when going from $A$ to $B$, one has to cross several passes in a
row, all at communication height $\Vbar(A,B)$, then the gate $G(A,B)$ is not
uniquely defined: any of the passes will form a gate (\figref{fig_gates}c). 
\item	If $A$ and $B$ are separated by a ridge of constant altitude
$\Vbar(A,B)$, then the whole ridge is the gate $G(A,B)$. 
\item	If the potential contains a flat part separating $A$ from $B$, at
height $\Vbar(A,B)$, then any curve in this part separating the two sets is a
gate.
\end{itemiz}

We now proceed to defining saddles as particular cases of isolated points in
gates. However, the definition should be independent of the choice of sets $A$
and $B$. In order to do this, we start by introducing notions of valleys
(cf.~\figref{fig_valleys}):

\begin{itemiz}
\item	The \defwd{closed valley} of a point $x\in\R^d$ is the set 
\begin{equation}
\label{stop4}
\CV{x} = \bigsetsuch{y\in\R^d}{\Vbar(y,x)=V(x)}\;.
\end{equation}
It is straightforward to check that $\CV{x}$ is closed and \pathconnected. 

\item	The \defwd{open valley} of a point $x\in\R^d$ is the set 
\begin{equation}
\label{stop5}
\OV{x} = \bigsetsuch{y\in\CV{x}}{V(y)<V(x)}\;.
\end{equation}
It is again easy to check that $\OV{x}$ is open. Note however that if the
potential contains horizontal parts, then $\CV{x}$ need not be the closure of
$\OV{x}$ (\figref{fig_valleys}c). Also note that $\OV{x}$ need not be
\pathconnected\ (\figref{fig_valleys}b). We will use this fact to define a
saddle.
\end{itemiz}

Let $\cB_\eps(x)=\setsuch{y\in\R^d}{\Norm{y-x}<\eps}$ denote the open ball
of radius $\eps$, centred in $x$.

\begin{definition}
\label{def_stop2}
A \defwd{saddle} is a point $z\in\R^d$ such that there exists $\eps>0$ for
which 
\begin{enum}
\item	$\OV{z}\cap\cB_\eps(z)$ is non-empty and not \pathconnected.
\item	$(\OV{z}\cup\set{z})\cap\cB_\eps(z)$ is \pathconnected.
\end{enum}
\end{definition}

\begin{figure}
\centerline{\includegraphics*[clip=true,width=140mm]{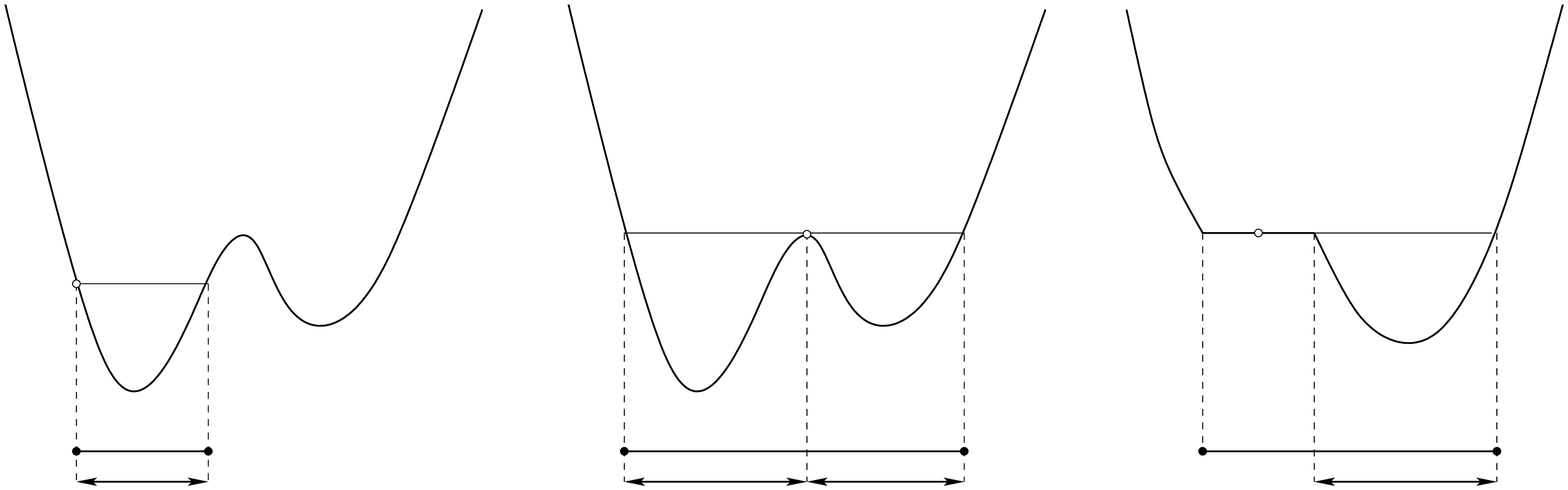}}
 \figtext{ 
	\writefig	0.55	4.6	{\bf (a)}
	\writefig	2.4	0.75	$\CV{x}$
	\writefig	2.4	0.35	$\OV{x}$
	\writefig	1.1	2.4	$x$
	\writefig	5.55	4.6	{\bf (b)}
	\writefig	9.1	0.75	$\CV{x}$
	\writefig	9.1	0.35	$\OV{x}$
	\writefig	7.5	2.9	$x$
	\writefig	10.55	4.6	{\bf (c)}
	\writefig	13.9	0.75	$\CV{x}$
	\writefig	13.9	0.35	$\OV{x}$
	\writefig	11.5	2.9	$x$
 }
 \vspace{4mm}
\caption[]{Examples of potentials and valleys. 
 In case {\bf (b)}, $x$ is a saddle. 
}
\label{fig_valleys}
\end{figure}

The link between saddles and gates is made clear by the following two results.

\begin{prop}
\label{prop_stop1}
Let $z$ be a saddle. Assume $\OV{z}$ is not \pathconnected,\footnote{We need
to make this assumption globally, in order to rule out situations where $z$ is
not the lowest saddle between two domains.} and let $A$ and
$B$ belong to different \pathconnected\ components of
$\OV{z}\cap\cB_\eps(z)$. Then $z\in G(A,B)$.
\end{prop}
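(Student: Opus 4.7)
The plan is to establish two things: (i) the communication height satisfies $\Vbar(A,B) = V(z)$, so that $z \in \cG(A,B)$; and (ii) there exists a minimal path from $A$ to $B$ whose only intersection with the level set $\cG(A,B)$ is the point $z$ itself. Combined, (i) and (ii) force any gate to contain $z$, since a gate must, by definition, intersect every minimal path and the path in (ii) leaves only $\{z\}$ as a candidate.

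For the upper bound $\Vbar(A,B) \leqs V(z)$, I would use the second clause of the saddle definition directly. Pick $a \in A$ and $b \in B$. Since $(\OV{z} \cup \set{z}) \cap \cB_\eps(z)$ is path-connected, I can concatenate a path $a \to z$ with a path $z \to b$, both staying inside this set and hence with $V \leqs V(z)$, attaining $V(z)$ only at $z$. This yields a path $\gamma^\star \colon a \to b$ with $\sup V(\gamma^\star) = V(z)$.

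For the lower bound, I argue by contradiction. Suppose $\Vbar(A,B) < V(z)$; then for sufficiently small $\delta > 0$ there is a path $\gamma \colon a \to b$ with $\sup V(\gamma) \leqs V(z) - \delta$. The key lemma is that every $y \in \gamma$ must lie in $\OV{z}$. Indeed, $V(y) < V(z)$ is immediate, and since $a \in A \subset \OV{z} \subset \CV{z}$ we have $\Vbar(a,z) = V(z)$; concatenating the sub-arc of $\gamma$ from $y$ to $a$ with a near-optimal path from $a$ to $z$ gives $\Vbar(y,z) \leqs V(z)$, while the reverse inequality is automatic because the endpoint $z$ already has $V = V(z)$. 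Hence $y \in \CV{z}$ and $y \in \OV{z}$. Consequently $\gamma$ connects $a$ and $b$ entirely within $\OV{z}$, contradicting the hypothesis that $a$ and $b$ lie in distinct path-components of $\OV{z}$. Together with the upper bound this gives $\Vbar(A,B) = V(z)$; moreover, $\gamma^\star$ from the first step is then a minimal path, and since $V < V(z)$ off the single point $z$ along $\gamma^\star$, we get $\gamma^\star \cap \cG(A,B) = \set{z}$, which forces $z \in G(A,B)$.

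The main obstacle I foresee is reconciling the local hypothesis (different components of $\OV{z} \cap \cB_\eps(z)$) with the global step needed in the lower-bound argument (where the obstructing path $\gamma$ may leave $\cB_\eps(z)$). This is precisely what the footnote alludes to: the hypothesis has to be read so that $A$ and $B$ belong to different \emph{global} path-components of $\OV{z}$, for otherwise a detour through a lower pass elsewhere in the landscape would give $\Vbar(A,B) < V(z)$ and disqualify $z$ as a saddle in the gate. I would therefore either state this global interpretation explicitly, or, assuming $\OV{z}$ is globally disconnected, argue that for small enough $\eps$ the local components of $\OV{z} \cap \cB_\eps(z)$ inject into distinct global components of $\OV{z}$, so that the contradiction in the lower-bound step goes through unchanged.
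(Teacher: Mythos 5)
Your proof is correct and follows essentially the same route as the paper's: an upper bound on $\Vbar(A,B)$ from a path through $(\OV{z}\cup\set{z})\cap\cB_\eps(z)$, a lower bound from the fact that any path staying strictly below $V(z)$ remains in $\OV{z}$ and would connect the two components, and the conclusion that any gate must meet the constructed minimal path, whose only point in $\cG(A,B)$ is $z$. The details you add --- the concatenation argument showing sub-threshold paths lie in $\CV{z}$, and the global reading of the disconnectedness hypothesis flagged in the footnote --- are exactly what the paper's terser proof implicitly uses.
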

\begin{proof}
Choose points $x\in A$ and $y\in B$ and a path $\gamma\colon A\to B$. Since $A$
and $B$ belong to different \pathconnected\ components of $\OV{z}$, the path
$\gamma$
must leave $\OV{z}$, which implies $\sup_{t\in[0,1]}V(\gamma(t))\geqs V(z)$.
Since $(\OV{z}\cup\set{z})\cap\cB_\eps(z)$ is \pathconnected, we can find a path
$\gamma\colon A\to B$ staying all the time in this set, and thus for this path,
$\sup_{t\in[0,1]}V(\gamma(t))= V(z)$. As a consequence, the communication height
$\Vbar(x,y)$ equals $V(z)$, i.e., $\gamma$ belongs to the set $\cP(A,B)$ of
minimal paths. Since $\OV{z}$ is not \pathconnected, we have found a path
$\gamma\in\cP(A,B)$ which must contain $z$, and thus $z\in G(A,B)$.
\end{proof}

\begin{prop}
\label{prop_stop2}
Let $A$ and $B$ be two disjoint sets, and let $z\in G(A,B)$. Assume that $z$ is
isolated in the sense that there exists $\eps>0$ such that 
$\cB_\eps^*(z)\defby\cB_\eps(z)\setminus\set{z}$ 
is disjoint from the union of all gates $G(A,B)$ between $A$ and
$B$. Then $z$ is a saddle. 
\end{prop}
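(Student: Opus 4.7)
The plan is to verify both conditions of Definition~\ref{def_stop2} for a sufficiently small $\eps>0$.

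I first use the minimality of $G(A,B)$ to extract a convenient minimal path through $z$. If every minimal path passing through $z$ also met some other point of $G(A,B)$, then $G(A,B)\setminus\set{z}$ would still intersect every minimal path, violating the minimality built into Definition~\ref{def_stop1}. Hence there exists $\cng\in\cP(A,B)$ passing through $z$ and disjoint from $G(A,B)\setminus\set{z}$. Let $T^-\defby\inf\setsuch{t}{\cng(t)=z}$ and $T^+\defby\sup\setsuch{t}{\cng(t)=z}$ (both attained, by continuity). Choose $\eps>0$ not exceeding the isolation radius and small enough that $\cB_\eps(z)\cap(A\cup B)=\emptyset$, and pick $s_1<T^-\leqs T^+<s_2$ with $\cng([s_1,T^-])\cup\cng([T^+,s_2])\subset\cB_\eps(z)$.

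Next I would argue that $s_1,s_2$ may be chosen with $V(\cng(s_1)),V(\cng(s_2))<V(z)$, so that $x_A\defby\cng(s_1)$ and $x_B\defby\cng(s_2)$ lie in $\OV{z}\cap\cB_\eps(z)$. If this failed, $V\circ\cng$ would equal $V(z)$ on an entire subinterval, say $[T^+,T^++\eta]$; each point of the corresponding arc would then lie in $\cG(A,B)\cap\cB_\eps^*(z)$ and simultaneously be an obligatory passage for the minimal path $\cng$, producing a gate containing that point and contradicting the isolation hypothesis (this is the main technical obstacle; see below). Granting this, the subarcs $\cng|_{[s_1,T^-]}$ and $\cng|_{[T^+,s_2]}$ lie in $(\OV{z}\cup\set{z})\cap\cB_\eps(z)$ and meet at $z$. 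For an arbitrary $y\in\OV{z}\cap\cB_\eps(z)$, the relation $y\in\CV{z}$ provides paths from $y$ to $z$ of communication height $V(z)$, and, after shrinking $\eps$ if necessary and eliminating level portions by the same isolation argument, one such path lies in $(\OV{z}\cup\set{z})\cap\cB_\eps(z)$. This proves condition~(ii).

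For condition~(i), non-emptiness is immediate from $x_A\in\OV{z}\cap\cB_\eps(z)$. To see that $\OV{z}\cap\cB_\eps(z)$ is not path-connected, I carry out a rerouting argument. Suppose, for contradiction, that there were a path $\delta\colon x_A\to x_B$ in $\OV{z}\cap\cB_\eps(z)$. Then $V<V(z)$ along $\delta$, so $\delta\cap\cG(A,B)=\emptyset$. Splicing $\cng|_{[0,s_1]}$, $\delta$, and $\cng|_{[s_2,1]}$ yields a continuous path $\cngt\colon A\to B$. By the choice $s_1<T^-\leqs T^+<s_2$ the spliced-out segment contains all visits of $\cng$ to $z$, and $\cng$ met $G(A,B)$ only at $z$; hence $\cngt$ is disjoint from $G(A,B)$. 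Since also $\sup V\circ\cngt\leqs V(z)=\Vbar(A,B)$, $\cngt$ is a minimal path in $\cP(A,B)$ avoiding the gate $G(A,B)$, a contradiction. Therefore $x_A$ and $x_B$ lie in distinct path-components.

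The main obstacle is the assertion used repeatedly above: a subarc of the minimal path $\cng$ that stays inside $\cB_\eps^*(z)\cap\cG(A,B)$ must contain a point belonging to some gate $G(A,B)$. The idea is to take such a point $p$ and build a minimal blocking subset of $\cG(A,B)$ through $p$ by combining $p$ with (a minimal refinement of) $G(A,B)\setminus\set{z}$; carrying this out rigorously in the presence of possibly intricate level sets of $V$ at height $\Vbar(A,B)$ is the delicate step, and it is precisely what the topological isolation hypothesis in the proposition is designed to rule out.
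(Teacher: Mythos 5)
Your extraction of a minimal path $\cng$ through $z$ that meets $G(A,B)$ only at $z$ (via minimality of the gate) is correct, and your splicing argument for condition~(i) is fine \emph{once} one has points $\cng(s_1),\cng(s_2)\in\cB_\eps(z)$ with $V<V(z)$. But the step you yourself flag as the main obstacle is a genuine gap, and the bridge you sketch for it does not work. When no such $s_1,s_2$ exist, you argue that an arc of $\cng$ lying in $\cG(A,B)\cap\cB_\eps^*(z)$ must contain a point of some gate, contradicting the isolation hypothesis. That implication is false: a point of the critical level set lying on a minimal path need not belong to \emph{any} gate, because gates are minimal blocking sets and such a point is typically redundant. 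Concretely, for a nondegenerate saddle $z$ in $d=2$ the unique gate is $\set{z}$ and the isolation hypothesis holds, yet a minimal path may run into $z$ along a branch of the level curve $\setsuch{y}{V(y)=V(z)}$; every point of that branch lies in $\cG(A,B)\cap\cB_\eps^*(z)$ and on a minimal path, but lies in no gate (any blocking set containing it must also contain $z$, so minimalisation removes it). Hence no contradiction with isolation arises, and for such a path your desired $s_1$ with $V(\cng(s_1))<V(z)$ simply does not exist: the path you fixed at the outset need not have the property you require, and what must be shown is that \emph{some} minimal path through $z$ dips strictly below $V(z)$ on $\cB_\eps^*(z)$. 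Your argument for condition~(ii) leans on the same unproven step (\lq\lq eliminating level portions by the same isolation argument\rq\rq), as well as on confining a path from $y$ to $z$ to the ball merely by shrinking $\eps$, which is not justified either; so both halves of the proof are incomplete.

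The paper takes a different and more structural route: it first proves that the union $D$ of all minimal paths from $A$ to $B$ coincides with the closed valley $\CV{z}$, by gluing an arbitrary minimal path backwards to $A$ with a minimal path through $z$, and conversely inserting a path $y\to z$ of height at most $V(z)$ into a minimal path containing $z$. This identity is what allows one to connect arbitrary points of $\OV{z}\cap\cB_\eps(z)$ to $z$ at heights not exceeding $V(z)$, and it is only \emph{after} this that the paper invokes isolation to assert the existence of a minimal path through $z$ on which $V$ is strictly smaller than $V(z)$ throughout $\cB_\eps^*(z)$ — i.e.\ it selects a suitable path rather than claiming the property for a path chosen in advance. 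To complete your proof you would need an argument of this kind (or some other explicit construction of a path strictly below the critical level inside the punctured ball); as it stands, the key analytic step is missing and the proposed way of filling it is based on an incorrect deduction.
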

\begin{proof}
Consider the set 
\begin{equation}
\label{stop6}
D = \bigcup_{\gamma\in\cP(A,B)} \bigcup_{t\in[0,1]} \gamma(t)
\end{equation}
of all points contained in minimal paths from $A$ to $B$. We claim that
$D=\CV{z}$. 

On one hand, if $x\in D$ then there exists a minimal path from $A$ to $B$
containing $x$. We follow this path backwards from $x$ to $A$. Then there is a
(possibly different) minimal path leading from the first path's endpoint in $A$
through $z$ to $B$. By gluing together these paths, we obtain a minimal path
connecting $x$ and $z$. This path never exceeds the potential value $V(z)$,
which proves $\Vbar(x,z)=V(z)$. Thus $x\in\CV{z}$, and $D\subset\CV{z}$
follows. 

On the other hand, pick $y\in\CV{z}$. Then there is a path
$\gamma_1\colon y\to z$ along which the potential does not exceed $V(z)$.
Inserting this path (twice, going back and forth) in a minimal path
$\gamma\in\cP(A,B)$ containing $z$, we get another minimal path from $A$ to $B$,
containing $y$. This proves $y\in D$, and thus the inverse inclusion
$\CV{z}\subset D$.

Now pick $x\in A$ and $y\in B$. There must exist a minimal path $\gamma\colon
x\to y$, containing $z$, with the property that $V$ is strictly smaller than
$V(z)$ on $\gamma([0,1])\cap\cB_\eps^*(z)$, since otherwise we
would contradict the assumption that $z$ be isolated. We can thus pick $x'$ on
$\gamma$ between $x$ and $z$ and $y'$ on $\gamma$ between $z$ and $y$ such that
$V(x')<V(z)$ and $V(y')<V(z)$. Hence we have $x',y'\in\OV{z}$ and any minimal
path from $x'$ to $y'$ staying in $\cB_\eps(z)$ has to cross $z\notin\OV{z}$.
This shows that $\OV{z}\cap\cB_\eps(z)$ is non-empty and not \pathconnected.
Finally, take any $x, y\in \OV{z}\cap\cB_\eps(z)\subset D$. Then we can connect
them by a path $\gamma\ni z$, and making $\eps$ small enough we may assume that
$V$ is strictly smaller than $V(z)$ on
$\gamma([0,1])\cap\cB_\eps^*(z)$, i.e.,
$\gamma([0,1])\setminus\set{z} \subset \OV{z}$. This proves that
$(\OV{z}\cup\set{z})\cap\cB_\eps(z)$ is \pathconnected.
\end{proof}


\subsection{Classification of saddles for differentiable potentials}
\label{ssec_sdiff}

Let us show that for sufficiently smooth potentials, our definition of saddles
is consistent with the usual definition of nondegenerate saddles. Then we will
start classifying degenerate saddles.

\begin{prop}
Let $V$ be of class $\cC^1$, and let $z$ be a saddle. Then $z$ is a stationary
point of $V$, i.e., $\nabla V(z)=0$. 
\end{prop}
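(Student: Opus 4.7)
The plan is to argue by contradiction: assume $\nabla V(z) \neq 0$ and show this forces $\OV{z}\cap\cB_\eps(z)$ to be path-connected for every small $\eps$, contradicting condition (i) of Definition~\ref{def_stop2}.

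First, since $\nabla V(z)\neq 0$ and $V$ is $\cC^1$, the implicit function theorem applied to $V-V(z)$ gives a $\cC^1$-diffeomorphism $\Phi$ from a small ball $\cB_\eps(z)$ onto an open neighborhood of the origin in $\R^d$ such that $V\circ\Phi^{-1}(u_1,\dots,u_d) = V(z) + u_1$. In these straightened coordinates, the local sublevel set $\set{V<V(z)}\cap\cB_\eps(z)$ corresponds to the half-ball $\set{u_1<0}$, which is obviously open, non-empty and path-connected.

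Next, I would check that every such $y$ with $V(y)<V(z)$ and $y\in\cB_\eps(z)$ actually lies in $\OV{z}$. Since $\OV{z}\subset\set{V<V(z)}$ trivially, it suffices to show $y\in\CV{z}$, i.e.\ $\Vbar(y,z)=V(z)$. The lower bound $\Vbar(y,z)\geqs V(z)$ is automatic. For the upper bound, consider the straight segment from $\Phi(y)$ to the origin in the straightened coordinates; along this segment the coordinate $u_1$ increases monotonically from a negative value to $0$, so $V$ increases monotonically from $V(y)$ to $V(z)$ along the pulled-back path. Hence $\Vbar(y,z)\leqs V(z)$, giving equality, so $y\in\OV{z}$.

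Combining these two observations, $\OV{z}\cap\cB_\eps(z)$ coincides with $\set{V<V(z)}\cap\cB_\eps(z)$, which is path-connected. This contradicts the non-path-connectedness required in part~(i) of the definition of a saddle, and therefore $\nabla V(z)=0$. The only delicate point is the path-construction in the second paragraph, showing $y\in\CV{z}$; everything else is a direct application of the implicit function theorem, so I do not anticipate a serious obstacle here.
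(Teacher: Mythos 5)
Your proof is correct and takes essentially the same route as the paper: assuming $\nabla V(z)\neq 0$, you use the implicit/inverse function theorem to describe $V$ near $z$ and conclude that $\OV{z}\cap\cB_\eps(z)$ coincides with the local sublevel set and is path-connected for small $\eps$, contradicting part~(i) of the definition of a saddle. Your extra verification that points of $\setsuch{V<V(z)}{}\cap\cB_\eps(z)$ indeed lie in $\OV{z}$ (via a path along which $V$ increases monotonically to $V(z)$) is a detail the paper leaves implicit; just take the straight segment inside a convex neighbourhood of the origin contained in the image of the straightening chart, and note that the image of $\cB_\eps(z)$ itself need not literally be a half-ball, though for small $\eps$ the intersection with $\set{u_1<0}$ remains path-connected.
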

\begin{proof}
Suppose, to the contrary, that $\nabla V(z)\neq 0$. We may assume $z=0$ and
$V(z)=0$. Choose local coordinates in which $\nabla V(0)=(a,0,\dots,0)$ with
$a>0$. By the implicit-function theorem, there exists a differentiable function
$h:\R^{d-1}\to\R^d$ such that all solutions of the equation $V(x)=0$ in a small
ball $\cB_\eps(0)$ are of the form $x_1=h(x_2,\dots,x_d)$. Furthermore,
$V(\eps,0,\dots,0)=a\eps+\order{\eps}$ is positive for $\eps>0$ and negative
for $\eps<0$. By continuity, $V(x)$ is positive for $x_1>h(x_2,\dots,x_d)$
and negative for $x_1<h(x_2,\dots,x_d)$, showing that $\OV{z}\cap\cB_\eps(0)$
is \pathconnected. Hence $z$ is not a saddle.
\end{proof}

\begin{prop}
\label{prop_sdiff2}
Assume $V$ is of class $\cC^2$, and let $z$ be a saddle. Then 
\begin{enum}
\item	the Hessian $\hessian{V(z)}$ has at least one eigenvalue smaller or
equal than $0$;
\item	the Hessian $\hessian{V(z)}$ has at most one eigenvalue strictly smaller
than $0$.
\end{enum} 
\end{prop}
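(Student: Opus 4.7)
The plan is to prove both claims by contradiction, in each case negating part~(i) of Definition~\ref{def_stop2} by showing that $\OV{z}\cap\cB_\eps(z)$ is either empty or \pathconnected{} for all sufficiently small $\eps>0$. Since $z$ is a saddle, the preceding proposition gives $\nabla V(z)=0$, so Taylor's theorem provides the local expansion
\begin{equation*}
V(z+u) = V(z) + Q(u) + r(u), \qquad
Q(u) \defby \tfrac12\pscal{u}{\hessian{V(z)}\,u}, \qquad
r(u)=o(\norm{u}^2).
\end{equation*}

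For part~(i), I assume for contradiction that all eigenvalues of $\hessian{V(z)}$ are strictly positive, so $Q(u)\geqs c\norm{u}^2$ for some $c>0$. Then $V(z+u)>V(z)$ on a punctured ball around the origin, hence $\OV{z}\cap\cB_\eps(z)=\emptyset$ for small $\eps$, contradicting the non-emptiness demanded by Definition~\ref{def_stop2}(i).

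For part~(ii), I assume $\hessian{V(z)}$ has two strictly negative eigenvalues $\lambda_1\leqs\lambda_2<0$ with orthonormal eigenvectors $v_1,v_2$ spanning a $2$-plane $P$. The strategy has three ingredients. First, $\OV{z}$ is open: if $y\in\OV{z}$ then $V(y)<V(z)$ persists on a small neighbourhood $U$ of $y$, and any $y'\in U$ can be joined to $z$ by prepending the segment $[y',y]$ (on which $V<V(z)$) to a path witnessing $\Vbar(y,z)=V(z)$, so $y'\in\OV{z}$. Hence $\OV{z}\cap\cB_\eps(z)$ is an open subset of $\R^d$, so it is \pathconnected{} if and only if it is connected. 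Second, the punctured $2$-disk $D_\eps\defby\setsuch{z+u}{u\in P,\,0<\norm{u}<\eps}$ sits inside $\OV{z}$: on $P$ we have $Q(u)\leqs\tfrac12\lambda_2\norm{u}^2<0$ for small $u$, and along the straight segment from $z+u$ to $z$ the inequality $V<V(z)$ holds off the endpoint, yielding $\Vbar(z+u,z)=V(z)$. Since $\dim P\geqs 2$, $D_\eps$ itself is \pathconnected.

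The third and crucial step is to connect each $y\in\OV{z}\cap\cB_\eps(z)$ to $D_\eps$ within $\OV{z}\cap\cB_\eps(z)$; combined with the first two ingredients this forces $\OV{z}\cap\cB_\eps(z)$ to be \pathconnected{} and contradicts the saddle property. Writing $u=y-z$, the easy case $Q(u)<0$ is handled by the segment $[y,z]\setminus\set{z}\subset\set{V<V(z)}\cap\OV{z}$, which meets $D_\eps$ arbitrarily close to $z$. The main obstacle is the remaining case $Q(u)\geqs 0$, which can arise only when $\hessian{V(z)}$ has zero eigenvalues and $u$ lies predominantly in the non-negative-eigenvalue subspace, with $V(y)<V(z)$ coming from the remainder $r(u)$. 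My plan is to reduce this case to the easy one by first displacing $y$ along $v\defby\sign(\pscal{u}{v_1})\,v_1$ by a parameter $\rho>0$ comparable to $\norm{u}$: since
\[
Q(u+\rho v)=Q(u)+\rho\abs{\pscal{u}{v_1}}\lambda_1+\tfrac12\rho^2\lambda_1
\]
and $\lambda_1<0$, for appropriate $\rho$ the right-hand side becomes strictly negative. One must then verify that the displacement segment remains in $\OV{z}\cap\cB_\eps(z)$ and order the scales $\eps\gg\norm{u}\sim\rho$ so that the remainder $r$ never dominates; this delicate control over the higher-order terms in the degenerate directions is where the argument has most of its substance.
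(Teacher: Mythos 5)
Part~(i) of your argument is fine and coincides with the paper's. For part~(ii), however, there is a genuine gap, and it sits exactly where you place \lq\lq most of the substance\rq\rq: the step connecting an arbitrary $y\in\OV{z}\cap\cB_\eps(z)$ to the core $D_\eps$ is not carried out, and the parts of it you do spell out are not correct as stated. First, your \lq\lq easy case\rq\rq\ is not easy: $Q(u)<0$ gives no lower bound on $\abs{Q(u)}/\norm{u}^2$, so it does not imply that the segment $(z,y]$ stays in $\set{V<V(z)}$. Along the segment one has $V(z+tu)-V(z)=t^2Q(u)+r(tu)$ with $r(tu)=\order{t^2\norm{u}^2}$, and when $\abs{Q(u)}=\order{\norm{u}^2}$ (e.g.\ $u$ almost in the non-negative eigenspace with a tiny negative component) a radially oscillating $\cC^2$ remainder can make $V>V(z)$ at intermediate $t$ even though $V(y)<V(z)$; so the useful dichotomy is $-Q(u)\geqs c\norm{u}^2$ versus not, and the \lq\lq not\rq\rq\ case — which contains part of your easy case — is precisely the hard one. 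Second, even granting the inclusion, the segment $[y,z]\setminus\set{z}$ lies on the ray through $u$, which is not in the plane $P$ in general, so it never meets $D_\eps$; \lq\lq meets $D_\eps$ arbitrarily close to $z$\rq\rq\ is false, and bridging from a point near $z$ into $D_\eps$ inside $\OV{z}\cap\cB_\eps(z)$ is essentially the connectivity statement you are trying to prove. Third, in the main case the displacement segment $s\mapsto y+s v$ need not stay in $\set{V<V(z)}$ for small $s$: the initial margin $V(z)-V(y)$ can be arbitrarily small, the quadratic gain is only of order $s\abs{\pscal{u}{v_1}}+s^2$ (and $\pscal{u}{v_1}$ may vanish), while $r(u+sv)-r(u)$ is only $s\cdot\order{\norm{u}}$, so the remainder can win near $s=0$. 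Since you defer exactly this control, the proposal is an outline rather than a proof.

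It is worth seeing how the paper avoids the problem: it never contracts toward the degenerate point $z$. Working in coordinates diagonalising the Hessian, it fixes two scales $\eps=\eps(\delta)\ll\delta$ and uses that on the band $x_1^2+x_2^2\approx\delta^2$ the contribution $-\tfrac12\abs{\lambda_1}x_1^2-\tfrac12\abs{\lambda_2}x_2^2\leqs-\tfrac12\abs{\lambda_2}\delta^2$ dominates \emph{uniformly} both $\tfrac12\sum_{i\geqs3}\lambda_i x_i^2$ for $\Norm{(x_3,\dots,x_d)}<\eps$ and the $\order{\Norm{x}^2}$ remainder; hence each slice at fixed $(x_3,\dots,x_d)$ of the local set $\set{V<V(z)}$ is an annulus-shaped \pathconnected\ set, and the slices are glued along the band, giving path-connectedness of $\OV{z}\cap\cB_\eps(z)$. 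If you want to keep your \lq\lq connect everything to a core\rq\rq\ formulation, the core should be such an annular band at radius of order $\delta$ in the $(x_1,x_2)$-plane, reached by moving \emph{outward} (where the negative quadratic terms beat the remainder uniformly), not the punctured disk through $z$ reached by moving inward.
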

\begin{proof}
Denote the eigenvalues of $\hessian{V(z)}$ by
$\lambda_1\leqs\lambda_2\leqs\dots\leqs\lambda_d$. We may again assume that
$z=0$ and $V(0)=0$, and choose a basis in which the Hessian is diagonal. Then 
\begin{equation}
\label{sdiff1}
V(x) = \frac12 \sum_{i=1}^d \lambda_i x_i^2 + \order{\Norm{x}^2}\;.
\end{equation}
\begin{enum}
\item	Assume, to the contrary that $\lambda_1>0$. Then $V>0$ near $z=0$, so
that
$\OV{z}=\emptyset$, and $z=0$ is not a saddle.  

\item	Suppose, to the contrary, that $\lambda_1\leqs\lambda_2<0$, and fix a
small $\delta>0$. Since 
\begin{equation}
\label{sdiff2}
V(x) = -\frac12\abs{\lambda_1}x_1^2 - \frac12\abs{\lambda_2} x_2^2 +
\frac12\sum_{i=3}^d \lambda_i
x_i^2 + \order{\Norm{x}^2}\;,
\end{equation}
we can find an $\eps=\eps(\delta)\in(0, \delta)$ such that for any fixed
$(x_3,\dots,x_d)$ of length less than $\eps$, the set
$\setsuch{(x_1,x_2)}{x_1^2+x_2^2<\delta^2, V(x)<0}$ 
is \pathconnected\ (topologically, it is an annulus). This implies that
$\setsuch{(x_1,\dots, x_d)}{x_1^2+x_2^2<\delta^2, V(x)<0,
\Norm{(x_3,\dots,x_d)}<\eps}$ 
is also \pathconnected. 
Hence $\OV{z}\cap\cB_\eps(z)$ is \pathconnected. 
\qed
\end{enum}
\renewcommand{\qed}{}
\end{proof}

\begin{prop}
\label{prop_sdiff3}
Assume $V$ is of class $\cC^2$, and let $z$ be a nondegenerate stationary point,
i.e.\ such that $\det(\hessian{V}(z))\neq0$. Then $z$ is a saddle if and only
if $\hessian{V}(z)$ has exactly one strictly negative eigenvalue.
\end{prop}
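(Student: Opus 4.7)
The plan is to use Proposition~\ref{prop_sdiff2} for one direction and to carry out a direct geometric analysis of the quadratic model for the other.

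\textbf{Forward direction.} Assume $z$ is a saddle and $\det(\hessian V(z))\neq 0$, so no eigenvalue of the Hessian vanishes. By Proposition~\ref{prop_sdiff2}(i), at least one eigenvalue is $\leqs 0$, hence $<0$ by nondegeneracy. By Proposition~\ref{prop_sdiff2}(ii), at most one eigenvalue is $<0$. Combining these yields exactly one negative eigenvalue.

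\textbf{Reverse direction.} Suppose $\hessian V(z)$ has eigenvalues $\lambda_1<0<\lambda_2\leqs\dots\leqs\lambda_d$. Translate and rotate so that $z=0$, $V(z)=0$, and the Hessian is diagonal in the standard basis. Then
\begin{equation}
V(x) = -\tfrac12\abs{\lambda_1}x_1^2 + \tfrac12\sum_{i=2}^d \lambda_i x_i^2 + R(x),
\qquad R(x)=\order{\Norm{x}^2}.
\end{equation}
I would verify Definition~\ref{def_stop2} for a sufficiently small $\eps>0$ in two steps.

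\textit{Step 1: Non-path-connectedness of $\OV{z}\cap\cB_\eps(z)$.} Points of the form $x^\pm=(\pm t,0,\dots,0)$ with $0<t<\eps$ satisfy $V(x^\pm)=-\tfrac12\abs{\lambda_1}t^2+\order{t^2}<0$ for $\eps$ small, and the straight segment from $x^\pm$ to $0$ keeps $V\leqs 0$, so $\Vbar(x^\pm,0)=0=V(0)$ and hence $x^\pm\in\OV{z}\cap\cB_\eps(z)$, which is therefore non-empty. Now suppose $\gamma\colon[0,1]\to\OV{z}\cap\cB_\eps(z)$ joins $x^+$ to $x^-$. Then the first coordinate $\gamma_1$ must vanish at some $t_0$, and at $\gamma(t_0)=(0,y_2,\dots,y_d)$ one has
\begin{equation}
V(\gamma(t_0)) = \tfrac12\sum_{i=2}^d \lambda_i y_i^2 + R(\gamma(t_0))
\geqs \tfrac12\lambda_2\Norm{(y_2,\dots,y_d)}^2 + \order{\Norm{\gamma(t_0)}^2},
\end{equation}
which is $\geqs 0$ for $\eps$ small, contradicting $V(\gamma(t_0))<0$. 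Hence $x^+$ and $x^-$ lie in distinct path-components.

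\textit{Step 2: Path-connectedness of $(\OV{z}\cup\set{z})\cap\cB_\eps(z)$.} Given $y\in\OV{z}\cap\cB_\eps(z)$, by definition of $\OV{z}$ there is a path from $y$ to $z$ along which $V\leqs V(z)=0$. Making $\eps$ smaller if necessary and restricting the path to $\cB_\eps(z)$ (e.g.\ by concatenating with the straight segment from the last exit point back into $\cB_\eps(z)$, adjusted by the argument of Step~1 to stay in $\OV z\cup\{z\}$), one can keep it inside $\cB_\eps(z)$. Any two such points can then be connected via $z$, establishing path-connectedness.

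\textbf{Main obstacle.} The delicate point is controlling the $\order{\Norm{x}^2}$ remainder in Step~1 uniformly along arbitrary continuous paths, in particular near the origin where both the quadratic part and the error vanish to the same order. The key observation is that the quadratic form is \emph{strictly} positive on the hyperplane $\set{x_1=0}\setminus\set{0}$ with modulus bounded below by $\tfrac12\lambda_2\Norm{x}^2$, so any fixed fraction of this dominates the $\order{\Norm{x}^2}$ remainder for $\eps$ small enough, independently of the path.
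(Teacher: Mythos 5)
Your forward direction and your Step~1 are correct and essentially the paper's own argument: the forward implication is exactly the combination of Proposition~\ref{prop_sdiff2} with nondegeneracy, and your hyperplane argument for the disconnectedness of $\OV{z}\cap\cB_\eps(z)$ (using that $V\geqs(\tfrac12\lambda_2-\eta)\Norm{x}^2\geqs0$ on $\set{x_1=0}\cap\cB_\eps(z)$, with $\eta$ the uniform bound on the $C^2$ Taylor remainder) is a cleaner version of the paper's slice argument. The genuine gap is Step~2, the path-connectedness of $(\OV{z}\cup\set{z})\cap\cB_\eps(z)$. Two problems: first, $y\in\CV{z}$ only means $\Vbar(y,z)=V(z)$, an \emph{infimum} over paths, so \lq\lq by definition there is a path from $y$ to $z$ along which $V\leqs V(z)$\rq\rq\ is not literally available; second, and more seriously, even granting such a path it is a global object that may leave any fixed ball, and your proposed repair --- concatenating with \lq\lq the straight segment from the last exit point back into $\cB_\eps(z)$, adjusted by the argument of Step~1\rq\rq\ --- does not produce a path lying in $\OV{z}\cup\set{z}$: Step~1 controls the sign of $V$ only on the hyperplane $\set{x_1=0}$ and on the $x_1$-axis, not along arbitrary re-entry segments, and shrinking $\eps$ cannot help since the global path depends on $y$. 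So as written the second half of the saddle definition is not verified.

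What is actually needed --- and what the paper's proof invokes through the remark that $\OV{z}\cap\cB_\eps(z)$ is topologically the interior of a double cone --- is a purely \emph{local} construction: show that $\set{V<0}\cap\cB_\eps(z)$ is contained in the elliptic double cone $\tfrac12\abs{\lambda_1}x_1^2>(\tfrac12\lambda_2-\eta)\Norm{x_\perp}^2$, where $x_\perp=(x_2,\dots,x_d)$, and that every point $x$ of this set can be joined to the origin inside $\cB_\eps(z)$ by a path on which $V<0$ except at the endpoint, for instance first contracting the transverse coordinates via $s\mapsto(x_1,sx_\perp)$, $s$ from $1$ to $0$, and then sliding along the $x_1$-axis to $0$. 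This simultaneously shows that such points (and all points of the path except the origin) belong to $\OV{z}$, so the connection stays in $(\OV{z}\cup\set{z})\cap\cB_\eps(z)$ and any two points can be joined through $z$. Note that checking $V<0$ along the contraction leg is not automatic: when $V(x)$ is only barely negative, the $\order{\Norm{x}^2}$ remainder can compete with the gain $\tfrac12\lambda_2(1-s^2)\Norm{x_\perp}^2$, and one needs, e.g., to distinguish points near the cone boundary (where $\Norm{x_\perp}$ is comparable to $\Norm{x}$ and the quadratic gain dominates) from points well inside the cone (where $\abs{V(x)}$ is bounded below by a fixed multiple of $\Norm{x}^2$). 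This local cone argument is where the real work of the converse direction lies, and it is missing from your proposal; your \lq\lq main obstacle\rq\rq\ paragraph addresses the (easier) Step~1 instead.
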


\begin{proof}
Denote the eigenvalues of $\hessian{V(z)}$ by
$\lambda_1\leqs\lambda_2\leqs\dots\leqs\lambda_d$. If $z$ is a saddle, then the
previous result implies that $\lambda_1<0<\lambda_2$. Conversely, if
$\lambda_1<0<\lambda_2$, we have 
\begin{equation}
\label{sdiff3}
V(x) = -\frac12 \abs{\lambda_1}x_1^2 + \frac12 \sum_{i=2}^d \lambda_i
x_i^2 + \order{\Norm{x}^2}\;.
\end{equation}
Thus for fixed small $(x_2,\dots,x_d)$, the set
$\setsuch{x_1}{\abs{x_1}<\eps, V(x)<0}$ is not \pathconnected, as it does not
contain $0$. Thus $\OV{z}\cap\cB_\eps(z)$ is not \pathconnected\ (it is
topologically
the interior of a double cone).
However, for $x_1=0$, adding the origin makes the set
\pathconnected, so that $(\OV{z}\cap\cB_\eps(z))\cup\set{z}$ is
\pathconnected.
\end{proof}

We can now classify all candidates for saddles in the following way. Let
$\lambda_1\leqs\lambda_2\leqs\dots\leqs\lambda_d$ be the eigenvalues of the
Hessian $\hessian{V(z)}$ of a stationary point $z$, arranged in increasing
order. Then the following cases may occur:
\begin{enum}
\item	$\lambda_1<0$: 
\begin{enum}
\item	$\lambda_2>0$: $z$ is a nondegenerate saddle.
\item	$\lambda_2=0$:
\begin{enum}
\item	$\lambda_3>0$: $z$ is a singularity of codimension $1$.
\item	$\lambda_3=0$:
\begin{enum}
\item	$\lambda_4>0$: $z$ is a singularity of codimension $2$.
\item	$\lambda_4=0$: $z$ is a singularity of codimension larger than $2$.
\end{enum}
\end{enum}
\end{enum}
\item	$\lambda_1=0$: 
\begin{enum}
\item	$\lambda_2>0$: $z$ is a singularity of codimension $1$.
\item	$\lambda_2=0$:
\begin{enum}
\item	$\lambda_3>0$: $z$ is a singularity of codimension $2$.
\item	$\lambda_3=0$: $z$ is a singularity of codimension larger than $2$.
\end{enum}
\end{enum}
\end{enum}

One can of course push further the classification, including all singularities
up to codimension $d$.


\subsection{Singularities of codimension 1}
\label{ssec_scodim1}

We assume in this subsection that the potential $V$ is of class $\cC^4$ and that
$z$ is a stationary point of $V$ with the Hessian $\hessian{V(z)}$ having one
vanishing eigenvalue. We may assume $z=0$ and $V(z)=0$. According to
Proposition~\ref{prop_sdiff2}, there are two cases to be considered: 
\begin{enum}
\item	$\lambda_1<0$, $\lambda_2=0$ and $0<\lambda_3\leqs\dots\leqs\lambda_d$.
\item	$\lambda_1=0$ and $0<\lambda_2\leqs\dots\leqs\lambda_d$.
\end{enum}
It will be convenient for the purpose of this subsection to relabel the first
two eigenvalues in such a way that $\lambda_1=0$, while $\lambda_2\neq 0$ can be
positive or negative and to choose a basis in which
$\hessian{V(z)}=\diag(0,\lambda_2,\dots,\lambda_d)$. For potentials $V$ of class
$\cC^r$ and $i_1,\dots,i_r\in\set{1,\dots,d}$, we introduce the notation
\begin{equation}
\label{scod1-1}
V_{i_1\dots i_r} = \frac{1}{n_1!\dots n_d!}
\frac{\partial}{\partial x_{i_1}}
\dots
\frac{\partial}{\partial x_{i_r}}
V(z)\;,
\qquad
n_j=\#\setsuch{k}{i_k=j}\;,
\end{equation}
with the convention that the $i_k$'s are always in increasing order. 

Under the above assumptions, the potential admits a Taylor expansion of the
form 
\begin{equation}
\label{scod1-2}
V(x) = \frac12\sum_{i=2}^d \lambda_i x_i^2 
+ \sum_{1\leqs i\leqs j\leqs k\leqs d} V_{ijk}x_ix_jx_k
+ \sum_{1\leqs i\leqs j\leqs k\leqs l\leqs d} V_{ijkl}x_ix_jx_kx_l
+ \order{\Norm{x}^4}\;,
\end{equation}
and the theory of normal forms allows to simplify this expression. Indeed, there
exists a change of variables $x=y+g(y)$, with $g$ a polynomial function,
such that
the potential expressed in the new variables has as few as possible terms of low
order in its Taylor expansion. In general, only a few so-called resonant terms
cannot be eliminated, and are thus essential for describing the local dynamics.

\begin{prop}
\label{prop_scodim1}
There exists a polynomial change of variables $x=y+g(y)$, where $g$ is a
polynomial with terms of degree $2$ and $3$, such that 
\begin{equation}
\label{scod1-3}
V(y+g(y)) = \frac12\sum_{i=2}^d \lambda_i y_i^2
+ C_3 y_1^3 + C_4 y_1^4 
+ \order{\Norm{y}^4}\;,
\end{equation}
where 
\begin{equation}
\label{scod1-4}
C_3 = V_{111}\;, \qquad
C_4 = V_{1111} - \frac12 \sum_{j=2}^d
\frac{V_{11j}^2}{\lambda_j}\;.
\end{equation}
\end{prop}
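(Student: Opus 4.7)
The proof is a standard normal-form computation: we perform a near-identity polynomial change of variables to remove all degree-3 and degree-4 monomials in the Taylor expansion, except those that lie in the kernel of the relevant homological operator. Since $\lambda_1=0$, the direction $y_1$ is ``resonant'' and so the pure $y_1^3$ and $y_1^4$ monomials are the only ones that cannot be removed.

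Write $g(y)=g_2(y)+g_3(y)$, where $g_2$ is homogeneous of degree $2$ and $g_3$ of degree $3$, both to be determined. Let $Q(y)=\frac12\sum_{i=2}^d\lambda_iy_i^2$, and let $V_3$ and $V_4$ denote the homogeneous parts of degree $3$ and $4$ of the Taylor expansion~\eqref{scod1-2}. Taylor-expanding $V$ around $y$ gives
\begin{equation*}
V(y+g(y)) = Q(y) + \nabla Q(y)\cdot g(y) + \tfrac12 g(y)^T\hessian{Q}g(y)
+V_3(y)+\nabla V_3(y)\cdot g_2(y) + V_4(y) + \order{\Norm{y}^4},
\end{equation*}
where only the terms up to degree $4$ in $y$ have been kept. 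Using $\nabla Q(y)=(0,\lambda_2y_2,\dots,\lambda_dy_d)$ and $\hessian{Q}=\diag(0,\lambda_2,\dots,\lambda_d)$, the homological operator acting on $g_k$ is $L g_k(y) := \sum_{j=2}^d \lambda_j y_j g_{k,j}(y)$, which produces a homogeneous polynomial of degree $k+1$.

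The degree-$3$ part of $V(y+g(y))$ equals $V_3(y)+Lg_2(y)$. Every cubic monomial in $y_1,\dots,y_d$ other than $y_1^3$ contains at least one factor $y_j$ with $j\ge 2$, so it lies in the image of $L$; setting, for $j\geqs 2$,
\begin{equation*}
g_{2,j}(y) = -\frac{V_{11j}}{\lambda_j}\,y_1^2 + (\text{terms needed to kill the remaining non-}y_1^3\text{ cubic monomials}),\qquad g_{2,1}=0,
\end{equation*}
eliminates all cubic terms except $V_{111}y_1^3$, giving $C_3=V_{111}$. (The choice for monomials like $y_1y_jy_k$ with $2\le j\le k$ is not unique but any admissible choice works; only the $-V_{11j}y_1^2/\lambda_j$ piece will contribute to $C_4$.)

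Now the degree-$4$ part of $V(y+g(y))$ equals
\begin{equation*}
V_4(y) + \nabla V_3(y)\cdot g_2(y) + \tfrac12 g_2(y)^T\hessian{Q}\,g_2(y) + Lg_3(y).
\end{equation*}
As before, every quartic monomial other than $y_1^4$ has a factor $y_j$ with $j\ge 2$ and hence lies in the image of $L$, so a suitable cubic $g_3$ kills all such terms. It remains to read off the coefficient of $y_1^4$. The contribution of $V_4$ is $V_{1111}$; that of $Lg_3$ is $0$ (every term in $Lg_3$ carries a factor $y_j$ with $j\geqs 2$). From $\nabla V_3\cdot g_2$ only $\partial V_3/\partial y_j$ evaluated on the $y_1^2$-part of $g_{2,j}$ yields a $y_1^4$ term, contributing $\sum_{j=2}^d V_{11j}\cdot(-V_{11j}/\lambda_j)=-\sum_{j=2}^d V_{11j}^2/\lambda_j$. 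From $\frac12 g_2^T\hessian{Q}\,g_2=\frac12\sum_{j\geqs2}\lambda_j g_{2,j}^2$ we pick up $\frac12\sum_{j=2}^d \lambda_j(V_{11j}/\lambda_j)^2=\frac12\sum_{j=2}^d V_{11j}^2/\lambda_j$. Summing the three contributions yields the announced $C_4=V_{1111}-\tfrac12\sum_{j=2}^d V_{11j}^2/\lambda_j$.

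The only genuinely delicate step is the bookkeeping in the last paragraph: one must make sure that the cross-term $\nabla V_3\cdot g_2$ and the ``completing-the-square'' correction $\frac12 g_2^T\hessian{Q}\,g_2$ coming from expanding $Q(y+g(y))$ to second order in $g_2$ are both included, as their combination produces the characteristic $-\tfrac12\sum V_{11j}^2/\lambda_j$ appearing in $C_4$.
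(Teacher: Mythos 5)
Your proof is correct and follows essentially the same route as the paper's: the same homological-operator (adjoint-map) argument showing that $y_1^3$ and $y_1^4$ are the only resonant monomials, with $g^j_{11}=-V_{11j}/\lambda_j$, $g^1_{11}=0$, and the same bookkeeping of the $\nabla V_3\cdot g_2$ and $\frac12\sum_{j\geqs2}\lambda_j(g^j_{11})^2$ contributions producing $C_4=V_{1111}-\frac12\sum_{j=2}^d V_{11j}^2/\lambda_j$. The only cosmetic difference is that you substitute $y+g_2(y)+g_3(y)$ in a single step, whereas the paper composes $[\id+g_2]$ and $[\id+g_3]$ successively; the two parametrisations agree up to terms that only affect the expansion at order $\order{\Norm{y}^4}$.
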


The proof uses standard normal form theory and will be given in
Appendix~\ref{app_nf}. Note that if $V$ is of class $\cC^5$, then
\eqref{scod1-3} holds with $ \order{\Norm{y}^4}$ replaced by 
$\Order{\Norm{y}^5}$. 
Let us now apply the result to derive an easy to verify necessary condition for
a point $z$ to be a saddle. 

\begin{cor}\hfill
\label{cor_scodim1}
\begin{enum}
\item 	Assume $\lambda_2<0$. Then the point $z$ is 
\begin{itemiz}
\item	a saddle if $C_3=0$ and $C_4>0$;
\item	not a saddle if $C_3\neq 0$ or $C_4<0$.
\end{itemiz}
\item 	Assume $\lambda_2>0$. Then the point $z$ is 
\begin{itemiz}
\item	a saddle if $C_3=0$ and $C_4<0$;
\item	not a saddle if $C_3\neq 0$ or $C_4>0$.
\end{itemiz}
\end{enum}
\end{cor}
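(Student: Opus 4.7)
The plan is to put $V$ in the normal form of Proposition~\ref{prop_scodim1} and then verify, case by case, the two conditions in Definition~\ref{def_stop2}. Since the map $x = y + g(y)$ is a local $\mathcal{C}^1$-diffeomorphism fixing the origin, it preserves path-connectedness of neighbourhoods, so it is enough to study
\begin{equation*}
V(y) = \tfrac12\sum_{i=2}^d \lambda_i y_i^2 + C_3 y_1^3 + C_4 y_1^4 + o(\|y\|^4).
\end{equation*}
In each of the subcases of the corollary, I will identify the leading homogeneous polynomial $P$ and determine the topology of $\{P<0\}\cap\cB_\eps(0)$; the remainder $o(\|y\|^4)$ will then be absorbed by a rescaling argument to conclude that $\{V<0\}\cap\cB_\eps(0)$ has the same number of path-components as $\{P<0\}\cap\cB_\eps(0)$ for $\eps$ small enough.

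For $\lambda_2<0$ the natural negative direction is $y_2$, and $\OV{z}\cap\cB_\eps$ is always non-empty. When $C_3=0$ and $C_4>0$, the restriction of $V$ to the $y_1$-axis is nonnegative, while in the $(y_1,y_2)$-plane the region $\{V<0\}$ lies outside the parabola $y_2^2\sim (2C_4/|\lambda_2|) y_1^4$ and so splits into two path-components (one with $y_2>0$, one with $y_2<0$). The remaining positive-quadratic directions $y_3,\dots,y_d$ only enlarge $V$ and cannot reconnect these components, while the $y_2$-segment through $0$ shows they reconnect through $z$; hence $z$ is a saddle. If instead $C_3\neq 0$, or $C_3=0$ with $C_4<0$, then $V$ is strictly negative on one or both sides of the $y_1$-axis arbitrarily close to $z$, and the $y_1$-axis provides a path reconnecting the halves $y_2>0$ and $y_2<0$; thus $\OV{z}\cap\cB_\eps$ is path-connected and $z$ is not a saddle.

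For $\lambda_2>0$ all quadratic eigenvalues are positive, so $\{V<0\}$ is controlled entirely by the $y_1$-behaviour. If $C_3\neq 0$, then $V(y_1,0,\dots,0)$ has a definite sign on each side of $0$, giving a single tongue of $\{V<0\}$ which is path-connected; so $z$ is not a saddle. If $C_3=0$ and $C_4>0$ then $V>0$ on a punctured neighbourhood of $z$, so $\OV{z}\cap\cB_\eps$ is empty and $z$ is not a saddle. If $C_3=0$ and $C_4<0$ then $V<0$ on both sides of the $y_1$-axis, producing two disjoint tongues of $\OV{z}$ that meet only at $z$; these are reconnected by the $y_1$-segment through $0$, on which $V\leqs 0$, so $z$ is a saddle.

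The main technical point, and the step that requires the most care, is the control of the remainder $o(\|y\|^4)$ on the anisotropic critical region bordering $\{V<0\}$. This is most delicate in the subcase $\lambda_2<0$, $C_3=0$, $C_4>0$, where $y_1$ and $y_2$ sit on different scales ($|y_2|\sim y_1^2$) and the competing terms $-\tfrac12|\lambda_2|y_2^2$ and $C_4 y_1^4$ are both of order $y_1^4$. On that boundary one has $\|y\|^4\sim y_1^4$, so the remainder is of strictly smaller order and the sign of $V$ matches that of the leading polynomial for $\eps$ small enough. A zonal decomposition of $\cB_\eps$ according to which monomial dominates, together with uniform Taylor estimates in each zone, makes this rigorous.
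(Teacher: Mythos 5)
Your proposal is correct and follows essentially the same route as the paper: reduce to the normal form of Proposition~\ref{prop_scodim1} and decide, case by case, whether the local sublevel set $\set{V<0}$ near $z$ is empty, path-connected, or split into two components rejoined through $z$, with the remainder $\order{\Norm{y}^4}$ absorbed on the relevant anisotropic scales. The only (cosmetic) divergence is in the case $C_3\neq 0$, where the paper identifies the zero-level set as a unique curve through the origin via the implicit-function theorem, while you argue directly from the sign of $V$ along the $y_1$-axis and an explicit reconnecting path; both yield the same conclusion.
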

\begin{proof}
Consider first the case $C_3\neq 0$. For simplicity, let us restrict to
$d=2$. In a neighbourhood of $z=0$, any solution to the equation $V(y)=0$ must
satisfy 
\begin{equation}
\label{scod1-5:1a}
y_2^{2} = -\frac{2C_3}{\lambda_2}y_1^{3} -
\frac{2C_4}{\lambda_2}y_1^{4} + \order{\Norm{y}^4}\;.
\end{equation}
Thus, solutions exist for  $y_{1}$ with $y_{1}C_3/\lambda_2<0$. Plugging the
ansatz
\begin{equation}
\label{scod1-5:1}
y_2 = \pm \sqrt{-\frac{2C_3}{\lambda_2}y_1^{3}} \,\bigbrak{1+r_2(y_1)}
\end{equation}
into the relation $V(y)=0$, dividing by $y_1^3$ and applying the
implicit-function theorem to the pair $(r_2,y_1)$ in the resulting equation
shows that there is a unique curve through the origin on which the potential
vanishes. Since for $y_2=0$, the potential has the same sign as $y_1$, we
conclude that $0$ is not a saddle. Now just note that the proof is similar in 
dimension $d>2$.

Consider next the case $C_3=0$ and $\lambda_2C_4>0$. If
$\lambda_2>0$, one sees that the origin is a local minimum. If $\lambda_2<0$,
then for fixed $(y_3,\dots,y_d)$ and sufficiently small $\eps$, the set
$\setsuch{(y_1,y_2)}{y_1^2+y_2^2<\eps^2, V(y)<0}$ is \pathconnected\ 
(topologically, it is an annulus). Hence $\OV{0}\cup\cB_\eps(0)$ is
\pathconnected. 

Finally, if $C_3=0$ and $\lambda_2C_4<0$, then either the 
set $\setsuch{y_1}{V(y)<0}$ for fixed $(y_2,\dots,y_d)$ or the set
$\setsuch{y_2}{V(y)<0}$ for fixed $(y_1,y_3,\dots,y_d)$ is not
\pathconnected, so that the valley of $0$ is locally split into two
disconnected components, joined at the origin.
\end{proof}

\begin{remark}
\label{rem_scod1}
The normal-form transformation $x\mapsto x+g(x)$ can also be applied when
$\lambda_1\neq0$. The result is exactly the same normal form as
in~\eqref{scod1-3}, except that there is an additional term $\frac12\lambda_1
y_1^2$. This observation is useful as it allows to study the system with a
unique
transformation of variables in a full neighbourhood of the bifurcation point.
\end{remark}

\begin{remark}
\label{rem_scod2}
One easily checks that if $V$ is of class $\cC^r$, one can construct
higher-order normal forms by eliminating all terms which are not of the form
$y_1^k$ with $k\leqs r$. In other words, there exists a polynomial $g(y)$ 
with terms of degree between $2$ and $r-1$
such that
\begin{equation}
\label{scod1-7}
V(y+g(y)) = \frac12\sum_{i=2}^d \lambda_i y_i^2
+ C_3 y_1^3 + C_4 y_1^4 
+ \dots + C_r y_1^r
+ \order{\Norm{y}^r}\;. 
\end{equation}
In general, however, there is no simple expression of the coefficients of the
normal form in terms of the original Taylor coefficients of $V$. 
Note that for $z=0$ to be a saddle, the first index $q$ such that $C_{q}\ne0$
has to be even with $\lambda_{2}C_{q}<0$.
\end{remark}


\subsection{Singularities of codimension 2}
\label{ssec_scodim2}

In this subsection, we shall assume that the potential $V$ is of class $\cC^{r}$
for some $r\geqs3$ and that $z$ is a stationary point of $V$ with the Hessian
$\hessian{V(z)}$ having two vanishing eigenvalues. We may
assume $z=0$ and $V(z)=0$. According to Proposition~\ref{prop_sdiff2}, there are
two cases to be considered: 
\begin{enum}
\item	$\lambda_1<0$, $\lambda_2=\lambda_3=0$ and
$0<\lambda_4\leqs\dots\leqs\lambda_d$.
\item	$\lambda_1=\lambda_2=0$ and $0<\lambda_3\leqs\dots\leqs\lambda_d$.
\end{enum}
For the purpose of this subsection, it will be convenient to relabel the first
three eigenvalues in such a way that $\lambda_1=\lambda_2=0$, while
$\lambda_3\neq 0$ can be positive or negative. We choose a basis in which
$\hessian{V(z)}=\diag(0,0,\lambda_3,\dots,\lambda_d)$. 
The potential thus admits a Taylor expansion of the form 
\begin{equation}
\label{scod2-2}
V(x) = \frac12\sum_{i=3}^d \lambda_i x_i^2 
+ \sum_{1\leqs i\leqs j\leqs k\leqs d} V_{ijk}x_ix_jx_k
+ \sum_{1\leqs i\leqs j\leqs k\leqs l\leqs d} V_{ijkl}x_ix_jx_kx_l
+ \dots\;.
\end{equation}
The theory of normal forms immediately yields the following result. 

\begin{prop}
\label{prop_scodim2}
There exists a polynomial change of variables $x=y+g(y)$, where $g$ is a
polynomial with terms of degree between $2$ and $r$, such that 
\begin{equation}
\label{scod2-3}
V(y+g(y)) = \frac12\sum_{i=3}^d \lambda_i y_i^2
+\sum_{k=3}^r V_k(y_1,y_2)  
+ \order{\Norm{y}^{r}}\;,
\end{equation}
where each $V_k$ is a homogeneous polynomial of order $k$, i.e.,
\begin{align}
\nonumber
V_3(y_1,y_2) &= V_{111}y_1^3 + V_{112}y_1^2y_2 + V_{122}y_1y_2^2 +
V_{222}y_2^3\;,\\
V_4(y_1,y_2) &= V_{1111}y_1^4 + V_{1112}y_1^3y_2 + V_{1122}y_1^2y_2^2 +
V_{1222}y_1y_2^3 + V_{2222}y_2^4\;,
\label{scod2-4}
\end{align}
and similarly for the higher-order terms.
\end{prop}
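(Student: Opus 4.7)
The plan is to prove this by iterating a normal-form reduction order by order, in direct parallel with the codimension-1 case treated in Proposition~\ref{prop_scodim1}. The key observation is that, since $\lambda_1 = \lambda_2 = 0$ but $\lambda_3, \dots, \lambda_d \ne 0$, a polynomial change of variables can eliminate any monomial in the Taylor expansion that contains at least one factor $y_i$ with $i \geqs 3$, but cannot touch monomials that are pure in $y_1, y_2$. The latter are the resonant terms, which collect precisely into the homogeneous polynomials $V_k(y_1,y_2)$.

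Concretely, I would proceed by induction on $k = 3, 4, \dots, r$. Suppose the potential has already been brought, by a polynomial change of variables, to the form
\begin{equation*}
\tfrac12 \sum_{i=3}^d \lambda_i y_i^2 + \sum_{j=3}^{k-1} V_j(y_1,y_2) + W_k(y) + \order{\norm{y}^k}\;,
\end{equation*}
where $V_j$ are homogeneous of degree $j$ in $(y_1,y_2)$, and $W_k$ is homogeneous of degree $k$ in all $d$ variables. I would then introduce a near-identity transformation $y \mapsto y + h^{(k)}(y)$ with $h^{(k)}$ having components $h_1^{(k)} = h_2^{(k)} = 0$ and $h_i^{(k)}$ (for $i \geqs 3$) a homogeneous polynomial of degree $k-1$ to be determined. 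A direct expansion shows that, modulo terms of order $> k$ and modulo modifications of coefficients already of order $\geqs k+1$, the $k$-th order part of the potential is modified by
\begin{equation*}
\sum_{i=3}^d \lambda_i\, y_i\, h_i^{(k)}(y)\;.
\end{equation*}
Any monomial $y_1^{a_1}\cdots y_d^{a_d}$ of total degree $k$ with some $a_i \geqs 1$ for $i \geqs 3$ can be cancelled by including the term $-\lambda_i^{-1} y_1^{a_1} \cdots y_i^{a_i-1} \cdots y_d^{a_d}$ in $h_i^{(k)}$. The remaining, necessarily uncancellable, monomials are exactly those of the form $y_1^a y_2^b$ with $a+b=k$, which become $V_k(y_1,y_2)$.

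Composing the transformations $h^{(3)}, \dots, h^{(r)}$ and absorbing the result into a single polynomial $g$ (whose non-zero homogeneous components have degrees between $2$ and $r-1$) yields the claimed normal form. The main technical point is verifying that each step preserves the normal form already achieved at lower orders; this is guaranteed because $h^{(k)}$ has degree $k-1 \geqs 2$, so its substitution only produces corrections of order $\geqs k$, leaving the quadratic part and the lower-degree $V_j$ intact. The explicit identification of the coefficients of $V_3$ and $V_4$ in \eqref{scod2-4} is immediate from the above recipe: since $h_1^{(3)} = h_2^{(3)} = 0$, the pure $(y_1,y_2)$-cubic monomials remain untouched at order $3$, and similarly at order $4$. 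I expect the main obstacle, as in Proposition~\ref{prop_scodim1}, to be purely bookkeeping — tracking how choices of $h^{(k)}$ at a given order feed into corrections at higher orders — and I would defer the detailed calculation to the same appendix (Appendix~\ref{app_nf}) as the codimension-$1$ case.
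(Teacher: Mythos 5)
Your proposal is correct in substance and follows essentially the same route as the paper, which proves this proposition by the standard normal-form argument of Appendix~\ref{app_nf} (Proposition~\ref{prop_scodim1}): the adjoint map $g\mapsto\nabla V_2\cdot g=\sum_{i\geqs3}\lambda_i y_i g_i$ eliminates exactly those monomials containing a factor $y_i$ with $i\geqs3$, leaving the pure $(y_1,y_2)$ monomials as resonant terms, order by order up to $r$. The only inaccuracy is your final claim that the resonant coefficients are "untouched \dots similarly at order $4$": the degree-$2$ generator $h^{(3)}$ does feed into the pure $(y_1,y_2)$ quartic coefficients through $\frac12\sum_{i\geqs3}\lambda_i\bigl(h^{(3)}_i\bigr)^2$ and $\sum_{i\geqs3}\partial_{y_i}V_3\cdot h^{(3)}_i$ (exactly as in the correction $C_4=V_{1111}-\frac12\sum_{j\geqs2}V_{11j}^2/\lambda_j$ of Proposition~\ref{prop_scodim1}), so the normal-form coefficients in~\eqref{scod2-4} are in general not the raw Taylor coefficients — though this does not affect the existence statement you are proving.
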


The proof uses standard normal form theory and follows along the lines of the
proof of Proposition~\ref{prop_scodim1}, given in Appendix~\ref{app_nf}.
Therefore, we refrain from giving its details here. Let us remark that if $V$ is
of class $\cC^{r+1}$, then \eqref{scod2-3} holds with $ \order{\Norm{y}^r}$
replaced by  $\Order{\Norm{y}^{r+1}}$.

We can again apply the result to derive an easy to verify necessary condition
for a point $z$ to be a saddle. 
Let $p$ be the smallest $k$ such that $V_k$ is not identically zero.
Generically, we will have $p=3$, but other values of $p$ are quite possible, for
instance due to symmetries (see Section~\ref{ssec_zerozero} for examples).
We call \emph{discriminant}\/ the polynomial\footnote{The r\^oles of $y_1$ and
$y_2$ being interchangeable, both definitions in~\eqref{scod2-5} are
equivalent via the transformation $t\mapsto 1/t$ and multiplication by $t^p$. We
choose to make this distinction in order to avoid having to introduce \lq\lq
roots at infinity\rq\rq. Equivalently, one could work on the projective line
$\R\!P^1$.}
\begin{equation}
\label{scod2-5}
\Delta(t) = 
\begin{cases}
V_p(t,1) = V_{1\dots 11}t^p + V_{1\dots12}t^{p-1} + \dots + V_{2\dots 22}
&\quad\text{if $V_{1\dots 11}\neq0$\;,} \\
V_p(1,t) = V_{2\dots 22}t^p + V_{2\dots21}t^{p-1} + \dots + V_{1\dots 11}
&\quad\text{if $V_{1\dots 11}=0$\;.} \\
\end{cases}
\end{equation}

The following corollary as well as Table~\ref{table_codim2} provide necessary
conditions for $z$ to be a saddle, expressed in terms of $\Delta(t)$ and the
sign of $\lambda_{3}$.

\begin{table}
\begin{center}
\begin{tabular}{|l|l|l|}
\hline
& \multicolumn{1}{|c|}{$\lambda_3<0$} &  \multicolumn{1}{|c|}{$\lambda_3>0$}\\
\hline
All roots of $\Delta(t)$ real and simple & Not a saddle & {\bf Saddle} \\
No real root of $\Delta(t)$ and $\Delta(t)>0$ & {\bf Saddle} & Not a saddle
(local minimum) \\
No real root of $\Delta(t)$ and $\Delta(t)<0$ & Not a saddle & Not a saddle \\
\hline
\end{tabular}
\end{center}
\caption{Classification of stationary points with double-zero eigenvalue with
the help of the discriminant $\Delta(t)$ and the sign of $\lambda_{3}$.}
\label{table_codim2} 
\end{table}

\goodbreak

\begin{cor}\hfill
\label{cor_scodim2}
\begin{enum}
\item 	Assume $d>2$ and $\lambda_3<0$. Then the point $z$ is 
\begin{itemiz}
\item	not a saddle if the discriminant has one or several real roots, all of
them simple;
\item	a saddle if the discriminant has no real root and is positive;
\item	not a saddle if the discriminant has no real root and
is negative.
\end{itemiz}
\item 	Assume $d=2$ or $\lambda_3>0$. Then the point $z$ is 
\begin{itemiz}
\item	a saddle if the discriminant has one or several real roots, all of
them simple;
\item	not a saddle (a local minimum) if the discriminant has no real root and
is positive;
\item	not a saddle if the discriminant has no real root and is negative.
\end{itemiz}
\end{enum}
\end{cor}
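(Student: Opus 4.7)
I would reduce $V$ to the normal form of Proposition~\ref{prop_scodim2},
\begin{equation*}
V(y) = \tfrac{1}{2}\sum_{i=3}^d \lambda_i y_i^2 + V_p(y_1,y_2) + R(y),
\end{equation*}
where $R$ gathers the higher homogeneous parts $V_{p+1},\dots,V_r$ together with the remainder $o(\Norm{y}^{r})$. Passing to polar coordinates $y_1=r\cos\theta$, $y_2=r\sin\theta$ and setting $\phi(\theta)=V_p(\cos\theta,\sin\theta)$ gives $V_p(y_1,y_2)=r^p\phi(\theta)$. The real roots of $\Delta$ are in bijection with the zeros of $\phi$ on $S^1$ (via $t=\cot\theta$ or $t=\tan\theta$), and simple roots of $\Delta$ correspond to sign-changing zeros of $\phi$; in particular $\Delta$ has no real root iff $\phi$ is of constant sign, in which case $\sign(\phi)=\sign(\Delta)$.

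Suppose first that $\Delta$ has only real simple roots. Then $\phi$ partitions $S^1$ into $2p$ alternating-sign arcs, so $\{V_p<0\}$ is a union of $p\geqs3$ open wedges whose closures meet only at $0$. If $\lambda_3>0$ (or $d=2$), the quadratic form in $(y_3,\dots,y_d)$ is non-negative, so any path crossing from one wedge of $\{V_p<0\}$ to another must pass through a region where $V_p>0$ and hence $V>0$; thus $\OV{0}\cap\cB_\eps(0)$ consists of $p$ disjoint components meeting only at the origin, and $0$ is a saddle. If $\lambda_3<0$, any two wedges can instead be joined via a piecewise-linear path through a point $(0,\dots,0,y_3,0,\dots,0)$ with $y_3$ small and nonzero, since along each segment $V$ is, to leading order, the sum of two non-positive contributions $(1-s)^p V_p(y_1,y_2)$ and $\tfrac{1}{2}\lambda_3 s^2 y_3^2$ which do not vanish simultaneously for $s\in(0,1)$; hence $\OV{0}\cap\cB_\eps(0)$ is \pathconnected\ and $0$ is not a saddle.

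Suppose next that $\Delta$ has no real root. If $\Delta>0$, then $V_p>0$ on the punctured $(y_1,y_2)$-plane. For $\lambda_3>0$ this yields $V>0$ for $y\neq 0$ small, so $0$ is a local minimum. For $\lambda_3<0$, the set $\{V<0\}$ lies where the negative term $\tfrac{1}{2}\lambda_3 y_3^2$ dominates $V_p$ plus the other non-negative eigenvalue contributions, which forces $|y_3|$ above a positive threshold; thus $\{V<0\}$ splits into two components distinguished by the sign of $y_3$, meeting only at $0$, and $0$ is a saddle. If instead $\Delta<0$, then $V_p<0$ on the punctured $(y_1,y_2)$-plane, so $\OV{0}\cap\cB_\eps(0)$ contains a punctured disk in that plane and is \pathconnected: $0$ is not a saddle.

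The main technical obstacle is to verify rigorously that the remainder $R$ cannot alter the local connectivity of $\OV{0}$. The cleanest approach is an anisotropic rescaling $y_1=\eps u_1$, $y_2=\eps u_2$, $y_i=\eps^{p/2} u_i$ for $i\geqs3$, under which $\eps^{-p}V(y)\to \tfrac{1}{2}\sum_{i\geqs3}\lambda_i u_i^2+V_p(u_1,u_2)$ uniformly on compact sets in $u$; the sublevel-set topology of the limit then governs that of $\OV{0}\cap\cB_{C\eps}(0)$ for small $\eps$ via a standard stability argument on the regular level set $\{V=\delta\}$ with small $\delta<0$. A final short argument, parallel to the end of the proof of Proposition~\ref{prop_stop2}, identifies $\OV{0}$ near $0$ with the union of those \pathconnected\ components of $\{V<0\}$ whose closure contains $0$.
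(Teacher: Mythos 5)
Your overall strategy (sign analysis of the leading homogeneous part $V_p$ in the $(y_1,y_2)$-plane followed by a case analysis on the sign of $\lambda_3$) is the same as the paper's, but the step that carries the real weight -- showing that the higher-order terms cannot alter this sign and connectivity structure -- is where your sketch has a genuine gap. First, the assertion that a path joining two wedges of $\set{V_p<0}$ \lq\lq must pass through a region where $V_p>0$ and hence $V>0$\rq\rq\ is not justified: near the zero lines of $V_p$ the leading term is arbitrarily small compared with $V_{p+1},\dots,V_r$ and the $\order{\Norm{y}^r}$ remainder, so $V_p>0$ does not imply $V>0$ there, and a priori the valley could leak across a zero line inside the $(y_1,y_2)$-plane. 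The paper closes exactly this hole by applying the implicit function theorem to $V(y_1,y_2,0,\dots,0)=0$ with the ansatz $y_1=y_2(t+R(y_2))$: each simple root of $\Delta$ yields a unique zero curve of the full potential through the origin, so the sign regions of $V$ restricted to the plane are still sectors. Your proposal contains no substitute for this step.

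Second, the anisotropic rescaling $y_1=\eps u_1$, $y_2=\eps u_2$, $y_i=\eps^{p/2}u_i$ ($i\geqs3$) gives uniform convergence only on compact sets in $u$, i.e.\ control on a box of size $\eps\times\eps\times\eps^{p/2}\times\dots$, whereas Definition~\ref{def_stop2} requires the connectivity of $\OV{0}\cap\cB_\rho(0)$ for a Euclidean ball; in the $u$-variables that ball is unbounded in the directions $i\geqs3$ as $\eps\to0$, so the \lq\lq standard stability argument\rq\rq\ does not apply as stated. Concretely, for $\lambda_3<0$ and $\Delta>0$ you must exclude paths in $\set{V<0}$ joining the regions $y_3>0$ and $y_3<0$ through values $\abs{y_3}$ of order $\rho\gg\eps^{p/2}$, and in the saddle case $\lambda_3>0$ you must exclude connections leaving the rescaling box altogether; the paper does this by slice-wise sign estimates valid on the whole neighbourhood (e.g.\ for fixed $(y_1,y_2,y_4,\dots,y_d)$ the admissible set of $y_3$ consists of two intervals), which rest on domination by the quadratic part rather than on a scaling limit. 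A smaller point: your count of \lq\lq $2p$ alternating-sign arcs\rq\rq\ presumes that all $p$ roots of $\Delta$ are real, while the hypothesis only says that the real roots are simple; what the argument actually needs is the existence of at least two negative sectors. Until these points are repaired, the proposal does not prove the corollary.
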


\begin{proof}
In order to determine the open valley $\OV{z}$ of $z=0$, we first look for
solutions of $V(y_1,y_2,0,\dots,0)=0$ near the origin. Such solutions are
necessarily of the form  $y_1=y_2(t+R(y_2))$ for some $t$, where $R(y_2)$ goes
to zero continuously as
$y_2\to0$. Plugging in, dividing by $y_2^p$ and setting $y_2=0$, one sees that
$t$ must be a root of the discriminant. Applying the implicit function theorem
to the pair $(y_2,R)$, one finds that there is a unique $R$ if this root is
simple. Thus whenever the discriminant has real roots, all of them simple,
$V(y_1,y_2,0,\dots,0)$ changes sign in a neighbourhood of the origin, the
regions of constant sign being shaped like sectors.
If $\lambda_3<0$, we have to distinguish between three cases:
\begin{enum}
\item	The discriminant has simple real roots. In the plane
$y_{3}=\dots=y_d=0$, there are several disconnected regions in which $V$ is
negative. However these regions merge when $y_{3}$ becomes nonzero. Hence all
regions can be connected by a path leaving the plane  $y_{3}=\dots=y_d=0$, so
that the origin is not a saddle. 

\item	The discriminant has no real roots and is positive. 
Then for each fixed $(y_1,y_2,y_4,\dots)$, the set of $y_3$ such that $V$ is
negative is not \pathconnected, and consequently the set $\setsuch{y}{V(y)<0}$
cannot be \pathconnected\ either. By adding the origin, the latter set becomes
\pathconnected, showing that $0$ is indeed a saddle. 

\item	The discriminant has no real roots and is negative. Then either $d=3$
and the origin is a local maximum, or $d>3$ and the set of $(y_4,\dots,y_d)$ for
which $V$ is negative is \pathconnected\ for each fixed $(y_1,y_2,y_3)$. Thus
any two points in the open valley close to the origin can be connected (connect
both endpoints to points in the set $y_4=\dots=y_d=0$ by a path with constant
$(y_1,y_2,y_3)$, and then connect the two paths within the set
$y_4=\dots=y_d=0$). Thus the origin cannot be a saddle. 
\end{enum}
The proofs are analogous in the case $\lambda_3>0$.
\end{proof}

We note that if $\lambda_3<0$ and the degree $p$ of the discriminant is odd,
then the origin is usually not a saddle. For the sake of brevity, we do not
discuss here cases in which
the discriminant has nonsimple roots, because then the behaviour depends on
higher-order terms in the Taylor expansion and there is a large number of cases
to distinguish.  


\subsection{Singularities of higher codimension}
\label{ssec_scodimhigher}

Generalisation to nonquadratic saddles of higher codimension is now quite
obvious. If the potential $V$ is of class $\cC^{r}$ and the first $q$
eigenvalues $\lambda_1,\dots,\lambda_q$ of the Hessian $\hessian{V(z)}$ are
equal to
zero, the normal form can be written as 
\begin{equation}
\label{scodh-3}
V(y+g(y)) = \frac12\sum_{i=q+1}^d \lambda_i y_i^2
+\sum_{k=3}^r V_k(y_1,\dots,y_q)  
+ \order{\Norm{y}^{r}}\;,
\end{equation}
where again each $V_k$ is a homogeneous polynomial of degree $k$. Let $p$ denote
the smallest $k$ such that $V_{k}$ is not identically zero. Then the role of the
discriminant
is now played by its analogue 
\begin{equation}
 \label{scodh-4}
\Delta(t_1,\dots,t_{q-1}) = V_p(t_1,t_2,\dots,t_{q-1},1)\;, 
\end{equation} 
and the classification is analogous to the one in Table \ref{table_codim2}
(with $\lambda_{q+1}$ instead of $\lambda_3$). Equivalently, one can study the
sign of $V_p$ on a sphere of constant radius. 


\section{First-passage times for nonquadratic saddles}
\label{sec_capacity}


\subsection{Some potential theory}
\label{ssec_cappot}

Let $(x_t)_t$ be the solution of the stochastic differential equation
\eqref{intro1}. Given a measurable set $A\subset\R^d$, we denote by
$\tau_A=\inf\setsuch{t>0}{x_t\in A}$ the first-hitting time of $A$. For
sets\footnote{All subsets of $\R^d$ we consider from now on will be assumed to
be \emph{regular}, that is, their complement is a region with
continuously differentiable boundary.} $A,B\subset\R^d$, the quantity 
\begin{equation}
\label{cappot1}
h_{A,B}(x) = \bigprobin{x}{\tau_A < \tau_B}
\end{equation}
is known to satisfy the boundary value problem 
\begin{equation}
\begin{cases}
L h_{A,B}(x) = 0  &\qquad\text{for $x\in (A\cup B)^c$\;,} \\
\phantom{L} h_{A,B}(x) = 1    &\qquad\text{for $x\in A$\;,} \\
\phantom{L} h_{A,B}(x) = 0    &\qquad\text{for $x\in B$\;,} 
\end{cases}
\label{cappot2}
\end{equation}
where $L=\eps\Delta - \pscal{\nabla V(\cdot)}{\nabla}$ is the infinitesimal
generator of the diffusion $(x_t)_t$. By analogy with the electrical potential
created between two conductors at potentials $1$ and $0$, respectively,
$h_{A,B}$
is called the \defwd{equilibrium potential}\/ of $A$ and $B$. More generally,
one can define an equilibrium potential $h^\lambda_{A,B}$, defined by a
boundary value problem similar to~\eqref{cappot2}, but with
$\smash{Lh^\lambda_{A,B}=\lambda h^\lambda_{A,B}}$. However, we will not need
this generalisation here.

The \defwd{capacity}\/ of the sets $A$ and $B$ is again defined in analogy with
electrostatics as the total charge accumulated on one conductor of a capacitor,
for unit potential difference. The most useful expression for our purpose is
the integral, or \emph{Dirichlet form}, 
\begin{equation}
\label{cappot3}
\capacity_A(B) =
\eps \int_{(A\cup B)^c} \e^{-V(x)/\eps} \Norm{\nabla h_{A,B}(x)}^2 \,\6x
\bydef \Phi_{(A\cup B)^c}(h_{A,B})\;.
\end{equation}
We will use the fact that the equilibrium potential $h_{A,B}$ minimises the
Dirichlet form $\smash{\Phi_{(A\cup B)^c}}$, i.e., 
\begin{equation}
\label{cappot4}
\capacity_A(B) = \inf_{h\in\cH_{A,B}} \Phi_{(A\cup B)^c}(h)\;.
\end{equation}
Here $\cH_{A,B}$ is the space of twice weakly differentiable functions, whose
derivatives up to order $2$ are in $L^2$, and which satisfy the boundary
conditions in~\eqref{cappot2}.

Proposition~6.1 in~\cite{BEGK} shows (under some assumptions which can be
relaxed to suit our situation) that if $x$ is a (quadratic) local minimum of the
potential, then the expected first-hitting time of a set $B$ is given by 
\begin{equation}
\label{cappot5}
\bigexpecin{x}{\tau_B} = 
\frac{\displaystyle\int_{B^c} \e^{-V(y)/\eps}h_{\cB_\eps(x),B}(y)\,\6y}
{\capacity_{\cB_\eps(x)}(B)}\;.
\end{equation}
The numerator can be estimated by the Laplace method, using some rough {\it a
priori\/} estimates on  the equilibrium potential $h_{\cB_\eps(x),B}$ (which is
close to $1$ in a neighbourhood of $x$, and negligibly small elsewhere). 
In the generic situation where $x$ is indeed a quadratic local
minimum, and the saddle $z$ forms the gate from $x$ to $B$, it is known that 
\begin{equation}
\label{cappot6}
\int_{B^c} \e^{-V(y)/\eps}h_{B_\eps(x),B}(y)\,\6y = 
\frac{(2\pi\eps)^{d/2}}{\sqrt{\det(\hessian{V}(x))}} \e^{-V(x)/\eps}
\bigbrak{1+\Order{\eps^{1/2}\abs{\log\eps}}}
\;,
\end{equation}
cf.~\cite[Equation~(6.13)]{BEGK}. Thus, the crucial
quantity to be computed is the capacity in the denominator. In
the simplest case of a quadratic saddle $z$ whose Hessian has eigenvalues
$\lambda_1<0<\lambda_2\leqs\dots\leqs\lambda_d$, one finds 
\begin{equation}
\label{cappot7}
\capacity_{\cB_\eps(x)}(B) = \frac1{2\pi}
\sqrt{\frac{(2\pi\eps)^d\abs{\lambda_1}}{\lambda_2\dots\lambda_d}}
\e^{-V(z)/\eps} \bigbrak{1+\Order{\eps^{1/2}\abs{\log\eps}}}\;,
\end{equation}
cf.~\cite[Theorem~5.1]{BEGK}, which implies the standard Eyring--Kramers
formula~\eqref{intro3}.

In the sequel, we shall thus estimate the capacity in cases where the gate
between $A=\cB_\eps(x)$ and $B$ is a non-quadratic saddle. Roughly speaking, the
central result, which is proved in Section~\ref{sec_proofs}, states that if the
normal form of the saddle is of the type 
\begin{equation}
\label{main00}
V(y) = -u_1(y_1) + u_2(y_2,\dots,y_q) + \frac12 \sum_{j=q+1}^d \lambda_j y_j^2 
+ \Order{\Norm{y}^{r+1}}\;,
\end{equation}
where the functions $u_1$ and $u_2$ satisfy appropriate growth conditions, then
\begin{equation}
\label{main01}
\capacity_A(B) =   
\eps \, \frac
{\displaystyle\int_{\cB_{\delta_2}(0)} \e^{-u_2(y_2,\dots,y_q)/\eps}
\,\6y_2\dots\6y_q}
{\displaystyle\int_{-\delta_1}^{\delta_1} \e^{-u_1(y_1)/\eps} \,\6y_1}
\prod_{j=q+1}^d 
\sqrt{\frac{2\pi\eps}{\lambda_j}} 
\bigbrak{1+R(\eps)}\;,
\end{equation}
for certain $\delta_1=\delta_1(\eps)>0$, $\delta_2=\delta_2(\eps)>0$. The
remainder $R(\eps)$ goes to zero as $\eps \to 0$, with a speed depending on
$u_1$ and $u_2$. 
Once this result is established, the computation of capacities is reduced to the
computation of the integrals in~\eqref{main01}. 

\begin{remark}
\label{rem_cappot}
If $x$ is a nonquadratic local minimum, the integral in~\eqref{cappot6} can
also be estimated easily by standard Laplace asymptotics. 
Hence the extension of the Eyring--Kramers formula to flat local minima is
straightforward, and the real difficulty lies in the effect of flat saddles.
\end{remark}


\subsection{Transition times for codimension 1 singular saddles}
\label{ssec_cap}

We assume in this section that the potential is of class $\cC^5$ at least, as
this allows a better control of the error terms.
Consider first the case of a saddle $z$ such that the Hessian matrix $\hessian
V(z)$ has eigenvalues
$\lambda_1=0<\lambda_2\leqs\lambda_3\leqs\dots\leqs\lambda_d$. In this case, the
unstable direction at the saddle is non-quadratic while all stable directions
are quadratic. According to Corollary~\ref{cor_scodim1}, in the most generic
case the potential admits a normal form 
\begin{equation}
\label{cap01}
V(y) = -C_4 y_1^4 + \frac12 \sum_{j=2}^d \lambda_j y_j^2 
+ \Order{\Norm{y}^5}
\end{equation}
with $C_4>0$, i.e., the unstable direction is quartic. (Note that the saddle $z$
is at the origin $0$ of this coordinate system.)

We are interested in transition times between sets $A$ and $B$ for which the
gate $G(A,B)$ consists only of the saddle $z$. In other words, we assume that
any minimal path $\gamma\in\cP(A,B)$ admits $z$ as unique point of highest
altitude. This does not exclude the existence of other stationary points in
$\OV{z}$, i.e., the potential seen along the path $\gamma$ may have several
local
minima and maxima. 

\begin{theorem}
\label{thm_cap01}
Assume $z$ is a saddle whose normal form satisfies~\eqref{cap01}. Let $A$ and
$B$ belong to different \pathconnected\ components of $\OV{z}$ and assume that 
$G(A,B)=\set{z}$. Then 
\begin{equation}
\label{cap02}
\capacity_{A}(B) = 
\frac{2C_4^{1/4}}{\Gamma(1/4)}
\sqrt{\frac{(2\pi)^{d-1}}{\lambda_2\dots\lambda_d}} 
\,\eps^{d/2+1/4} \e^{-V(z)/\eps} 
\bigbrak{1+\Order{\eps^{1/4}\abs{\log\eps}^{5/4}}}\;,
\end{equation}
where $\Gamma$ denotes the Euler Gamma function.
\end{theorem}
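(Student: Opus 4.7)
The plan is to derive matching upper and lower bounds on $\capacity_A(B)$ using the variational characterisation \eqref{cappot4} of the Dirichlet form, after reducing the problem to a small neighbourhood of the saddle by means of the normal form from Proposition~\ref{prop_scodim1}. Because $G(A,B)=\set{z}$, the integrand $\e^{-V/\eps}\norm{\nabla h_{A,B}}^2$ is concentrated in a small box $\cN_\delta=\setsuch{y}{\abs{y_1}\leqs\delta_1,\,\norm{(y_2,\dots,y_d)}\leqs\delta_2}$ centred at $z$, and all contributions from the complement are exponentially subleading. The scales will be chosen as $\delta_1\sim(\eps\abs{\log\eps})^{1/4}$ (the quartic scale of the unstable direction) and $\delta_2\sim(\eps\abs{\log\eps})^{1/2}$ (the Gaussian scale of the stable directions); these are the smallest scales on which the Taylor remainder $\Order{\Norm{y}^5}$ in \eqref{cap01} is still negligible in the exponent.

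For the upper bound I would exhibit the factorised trial function $h^+(y)=f(y_1)\chi(y_\perp)$ on $\cN_\delta$, with $\chi$ a smooth cutoff supported on $\norm{y_\perp}\leqs\delta_2$ and equal to $1$ on the half-size ball, and $f$ the minimiser of the one-dimensional Dirichlet form in the $y_1$ direction, i.e.\ the solution of $(\e^{V/\eps}f')'=0$ with boundary values $f(-\delta_1)=1$, $f(\delta_1)=0$. Extending $h^+$ by the constants $1$ and $0$ on the two \pathconnected{} components of $\OV{z}\setminus\cN_\delta$ containing $A$ and $B$ respectively (which is possible precisely because $A$ and $B$ lie in different components), the Dirichlet form decouples into a $y_1$-integral equal to $Z^{-1}$, with
\begin{equation*}
 Z=\int_{-\delta_1}^{\delta_1}\e^{-C_4 y_1^4/\eps}\,\6y_1
 =\frac{\Gamma(1/4)}{2}\Bigpar{\frac{\eps}{C_4}}^{1/4}\bigbrak{1+o(1)}\;,
\end{equation*}
and a transverse part giving $\prod_{j=2}^d\sqrt{2\pi\eps/\lambda_j}$ by Laplace's method. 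Multiplying by $\eps\,\e^{-V(z)/\eps}$ yields the stated prefactor with the power $\eps^{d/2+1/4}$ coming from $\eps\cdot\eps^{-1/4}\cdot\eps^{(d-1)/2}$.

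For the lower bound I would insert the true equilibrium potential $h_{A,B}$ into \eqref{cappot3} and restrict integration to $\cN_\delta$. Slicing at fixed $y_\perp$, the inner integral $\int_{-\delta_1}^{\delta_1}\e^{-V/\eps}\abs{\partial_{y_1}h_{A,B}}^2\,\6y_1$ is bounded below by the one-dimensional capacity $Z^{-1}$ of the interval endpoints, provided $h_{A,B}(-\delta_1,y_\perp)=1+o(1)$ and $h_{A,B}(\delta_1,y_\perp)=o(1)$ uniformly in $\norm{y_\perp}\leqs\delta_2$. These boundary estimates I would obtain by combining the maximum principle for the generator $L$ with Wentzell--Freidlin large-deviation bounds in the spirit of the a priori estimates of~\cite{BEGK}: outside the saddle's valley the communication height to $A$ (resp.~$B$) strictly exceeds $V(z)$, so $h_{A,B}$ is exponentially close to its boundary value there. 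A transverse Laplace expansion then closes the bound against the upper one.

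The main obstacle will be the quantitative error analysis. The $\Order{\Norm y^5}$ normal-form remainder contributes a multiplicative factor $\exp(\Order{\delta_1^5/\eps})\cdot\exp(\Order{\delta_2^5/\eps})$ inside the exponential; on $\cN_\delta$ with the scales above this is of order $\exp(\Order{\eps^{1/4}\abs{\log\eps}^{5/4}})$, which dictates the size $\Order{\eps^{1/4}\abs{\log\eps}^{5/4}}$ of the remainder announced in \eqref{cap02} and is strictly worse than the $\Order{\eps^{1/2}\abs{\log\eps}}$ error in the non-degenerate case because the quartic scale $\delta_1\sim\eps^{1/4}$ is larger than the Gaussian scale $\sim\eps^{1/2}$. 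The second, more delicate point is propagating the a priori bound on $h_{A,B}$ up to the faces $y_1=\pm\delta_1$, because standard barrier arguments for non-degenerate saddles rely on the Gaussian decay of $\e^{-V/\eps}$ in the unstable direction; here one must replace that Gaussian by the slower quartic-Gaussian and verify that the harmonic-measure estimates still yield $o(1)$ corrections on $\cN_\delta$.
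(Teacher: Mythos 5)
Your overall strategy coincides with the paper's: Theorem~\ref{thm_cap01} is proved there by specialising the general capacity bounds of Propositions~\ref{prop_main1-1} and~\ref{prop_main1-2} to $r=4$, $q=2$, $u_1(y_1)=C_4y_1^4$, $u_2(y_2)=\lambda_2y_2^2/2$, with exactly your scales $\delta_1\sim(\eps\abs{\log\eps})^{1/4}$, $\delta_2\sim(\eps\abs{\log\eps})^{1/2}$; your quartic integral $\tfrac12\Gamma(1/4)(\eps/C_4)^{1/4}$, the resulting power $\eps^{d/2+1/4}$, and the identification of $\delta_1^5/\eps\sim\eps^{1/4}\abs{\log\eps}^{5/4}$ as the dominant relative error all match the paper. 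Two remarks on the upper bound: your trial function must also be defined outside the valley (the paper interpolates across a layer $\cS_\eps$ with $\norm{\nabla h^+}\leqs C/\delta_1$ and invokes exponential tightness, producing an \emph{additive} error $R_2$), and the contribution of that layer near the saddle is only polynomially, not exponentially, subleading, since the potential there exceeds $V(z)$ only by $\Order{\eps\abs{\log\eps}}$; your transverse cutoff $\chi$ plays the corresponding role inside the valley and is harmless if the constant in $\delta_2$ is taken large enough.

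The genuine gap is in the a priori boundary estimates for the lower bound. The faces $y_1=\pm\delta_1$ are \emph{not} outside the saddle's valley: with $\delta_1\sim(\eps\abs{\log\eps})^{1/4}$ they lie inside $\OV{z}$, only $C_4\delta_1^4=\Order{\eps\abs{\log\eps}}$ below $V(z)$. Hence $h_{A,B}$ is not \lq\lq exponentially close\rq\rq\ to $0$ resp.\ $1$ there; the available gain is merely a power of $\eps$, and it must beat polynomially divergent prefactors. The paper's resolution is Lemma~\ref{lem_apriori} combined with the rough capacity bound of Remark~\ref{rem_upperbnd}: one has $h_{A,B}(x)\leqs C\eps^{-d}\capacity_{\cB_\eps(x)}(A)\,\e^{\Vbar(\set{x},B)/\eps}$, and one then chooses the box deep enough, namely $u_1(\pm\deltahat_1)\geqs 4K\eps\abs{\log\eps}$ with $K>\max\set{2d(2+c),d-q}$, so that the resulting factor of order $\eps^{K}$ on the faces overcomes $\eps^{-d}$ and the rough capacity estimate, yielding $h_{A,B}(\deltahat_1,y_\perp)=\Order{\eps^{1/2}}$ and $h_{A,B}(-\deltahat_1,y_\perp)=1-\Order{\eps^{1/2}}$, uniformly over the face. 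Your diagnosis that the obstacle is the replacement of Gaussian by quartic decay in the unstable direction misses this point: the mechanism is insensitive to the shape of $u_1$; what matters is the depth of the box measured in units of $\eps\abs{\log\eps}$ relative to a dimension-dependent threshold. Note finally that enlarging $\delta_1$ by such a constant factor does not spoil your error analysis, since $\deltahat_1^{\,5}/\eps$ is still $\Order{\eps^{1/4}\abs{\log\eps}^{5/4}}$, which is why the paper obtains the same remainder in the matching lower bound.
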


The proof is given in Section~\ref{ssec_nonquad}. In the case of a quadratic
local minimum $x$, combining this result with Estimate~\eqref{cappot6}
immediately yields the following result on first-hitting times.

\begin{cor}
\label{cor_cap01}
Assume $z$ is a saddle whose normal form satisfies~\eqref{cap01}. 
Let\/ $O$ be one of the \pathconnected\ components of\/ $\OV{z}$, and suppose 
that the minimum of\/ $V$ in\/ $O$ is attained at a unique point $x$ and is 
quadratic. Let $B$ belong to a different \pathconnected\ component of\/
$\OV{z}$, with $G(\set{x},B)=\set{z}$. Then the
expected first-hitting time of $B$ satisfies 
\begin{equation}
\label{cap03}
\bigexpecin{x}{\tau_B} = 
\frac{\Gamma(1/4)}{2C_4^{1/4}} 
\sqrt{\frac{2\pi\lambda_2\dots\lambda_d}
{\det(\hessian{V}(x))}} \eps^{-1/4}
\e^{[V(z)-V(x)]/\eps} 
\bigbrak{1+\Order{\eps^{1/4}\abs{\log\eps}^{5/4}}}\;.
\end{equation}
\end{cor}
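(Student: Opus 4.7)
The plan is to combine three ingredients already assembled in the paper: the potential-theoretic representation \eqref{cappot5} of the expected first-hitting time, the \emph{a priori} Laplace estimate \eqref{cappot6} for the weighted integral in its numerator, and the sharp capacity estimate of Theorem~\ref{thm_cap01} for its denominator. Once these are in hand, the corollary reduces to algebra and bookkeeping of error terms.

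Concretely, I would first choose $\eps$ small enough that $\cB_{\eps}(x)\subset O$, and apply~\eqref{cappot5} with $A=\cB_{\eps}(x)$. The hypotheses on $x$, $z$ and $B$ transfer to this choice of $A$: since $x$ lies in one component of $\OV{z}$ and $B$ in another, the sets $\cB_{\eps}(x)$ and $B$ lie in different \pathconnected\ components of $\OV{z}$, and any minimal path from $\cB_{\eps}(x)$ to $B$ still has to pass through $z$, so that $G(\cB_{\eps}(x),B)=\set{z}$ by the argument of Proposition~\ref{prop_stop1}. This puts us in position to invoke Theorem~\ref{thm_cap01} for the denominator.

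Next, I would insert into the right-hand side of~\eqref{cappot5} the numerator estimate from~\eqref{cappot6},
\begin{equation*}
\int_{B^c} \e^{-V(y)/\eps}h_{\cB_\eps(x),B}(y)\,\6y
= \frac{(2\pi\eps)^{d/2}}{\sqrt{\det(\hessian{V}(x))}}\e^{-V(x)/\eps}
\bigbrak{1+\Order{\eps^{1/2}\abs{\log\eps}}}\;,
\end{equation*}
which is applicable since $x$ is a quadratic local minimum, together with the denominator estimate
\begin{equation*}
\capacity_{\cB_\eps(x)}(B)
= \frac{2C_4^{1/4}}{\Gamma(1/4)}\sqrt{\frac{(2\pi)^{d-1}}{\lambda_2\dots\lambda_d}}
\,\eps^{d/2+1/4}\e^{-V(z)/\eps}
\bigbrak{1+\Order{\eps^{1/4}\abs{\log\eps}^{5/4}}}
\end{equation*}
from Theorem~\ref{thm_cap01}. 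Taking the ratio, the prefactor powers combine as $(2\pi\eps)^{d/2}/\bigl[\sqrt{(2\pi)^{d-1}}\,\eps^{d/2+1/4}\bigr]=\sqrt{2\pi}\,\eps^{-1/4}$, the exponentials combine to $\e^{[V(z)-V(x)]/\eps}$, and the $\lambda_j$ and $C_4$ factors rearrange into the announced form. Since $\eps^{1/4}\abs{\log\eps}^{5/4}$ dominates $\eps^{1/2}\abs{\log\eps}$ as $\eps\to 0$, the two remainder terms merge into a single $\bigbrak{1+\Order{\eps^{1/4}\abs{\log\eps}^{5/4}}}$.

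I do not expect a serious obstacle at this stage: the genuine analytic work is concentrated in Theorem~\ref{thm_cap01}, whose proof in Section~\ref{ssec_nonquad} handles the nontrivial Laplace-type asymptotics of the Dirichlet form near the quartic unstable direction, and in the control of $h_{\cB_\eps(x),B}$ needed to justify~\eqref{cappot6} (essentially as in~\cite{BEGK}). The only mild care required here is to verify that the geometric hypotheses on $A=\cB_{\eps}(x)$ and $B$ used in~\eqref{cappot5}--\eqref{cappot6} (regularity of the boundary, uniqueness of the minimum in the relevant well, and the saddle $z$ being the unique gate) indeed hold under the hypotheses of the corollary; each of these is immediate from the assumption that $x$ is the unique quadratic minimum in $O$ and that $G(\set{x},B)=\set{z}$.
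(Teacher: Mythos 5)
Your proposal is correct and coincides with the paper's own (very short) argument: the paper also obtains Corollary~\ref{cor_cap01} by inserting the numerator estimate~\eqref{cappot6} and the capacity estimate of Theorem~\ref{thm_cap01} into the representation~\eqref{cappot5} and merging the error terms, which is exactly what you do. Your algebraic bookkeeping of the prefactor, the exponentials and the dominant remainder $\Order{\eps^{1/4}\abs{\log\eps}^{5/4}}$ is accurate, so nothing further is needed.
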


Note in particular that unlike in the case of a quadratic saddle, the
subexponential
asymptotics depends on $\eps$ to leading order, namely proportionally to
$\eps^{-1/4}$. 

\begin{remark} \hfill
\begin{enum}
\item	If the gate $G(A,B)$ contains several isolated saddles, the
capacity is obtained simply by adding the contributions of each individual
saddle. In other words, just as in electrostatics, for capacitors in parallel
the equivalent capacity is obtained by adding the capacities of individual
capacitors.

\item	We can extend this result to even flatter unstable directions. Assume
that the potential $V$ is of class $2p+1$ for some $p\geqs2$, and
that the normal form at the origin reads 
\begin{equation}
\label{cap04}
V(y) = -C_{2p} y_1^{2p} + \frac12 \sum_{j=2}^d \lambda_j y_j^2 
+ \Order{\Norm{y}^{2p+1}}\;,
\end{equation}
with $C_{2p}>0$.
Then a completely analogous proof shows that~\eqref{cap02} is to be replaced by 
\begin{equation}
\label{cap05}
\capacity_A(B) = 
\frac{pC_{2p}^{1/2p}}{\Gamma(1/2p)}
\sqrt{\frac{(2\pi)^{d-1}}{\lambda_2\dots\lambda_d}} \,\eps^{d/2+(p-1)/2p}
\bigbrak{1+\Order{\eps^{1/2p}\abs{\log\eps}^{(2p+1)/2p}}}\;.
\end{equation}
Consequently, if the assumptions of Corollary~\ref{cor_cap01} on the minimum $x$
and the set $B$ are satisfied, then 
\begin{equation}
\label{cap03a}
\bigexpecin{x}{\tau_B} = 
\frac{\Gamma(1/2p)}{pC_{2p}^{1/2p}} 
\sqrt{\frac{2\pi\lambda_2\dots\lambda_d}
{\det(\hessian{V}(x))}} \eps^{-(p-1)/2p}
\e^{[V(z)-V(x)]/\eps} 
\bigbrak{1+\Order{\eps^{1/2p}\abs{\log\eps}^{(2p+1)/2p}}}\;.
\end{equation}
Note that the subexponential prefactor of the expected first-hitting
time now behaves like $\eps^{-(p-1)/2p}$. As $p$ varies from $1$ to $\infty$,
i.e., as the unstable direction becomes flatter and flatter, the prefactor's
dependence on $\eps$ changes from order~$1$ to order~$1/\sqrt\eps$.
\end{enum}
\end{remark}

Consider next the case of a saddle $z$ such that the Hessian matrix $\hessian
V(z)$ has eigenvalues $\lambda_1 < \lambda_2=0 <
\lambda_3\leqs\dots\leqs\lambda_d$. In this case, all directions, whether stable
or unstable, are quadratic but for one of the stable directions. According
to Corollary~\ref{cor_scodim1}, in the most generic case the potential admits a
normal form 
\begin{equation}
\label{cap06}
V(y) = -\frac12 \abs{\lambda_1}y_1^2 + C_4 y_2^4 + \frac12 \sum_{j=3}^d
\lambda_j y_j^2 
+ \Order{\Norm{y}^5}
\end{equation}
with $C_4>0$, i.e., the non-quadratic stable direction is quartic.

\begin{theorem}
\label{thm_cap02}
Assume $z$ is a saddle whose normal form satisfies~\eqref{cap06}. Let $A$ and
$B$ belong to different \pathconnected\ components of $\OV{z}$ and assume that 
$G(A,B)=\set{z}$. Then 
\begin{equation}
\label{cap07}
\capacity_A(B) = 
\frac{\Gamma(1/4)}{2C_4^{1/4}}
\sqrt{\frac{(2\pi)^{d-3}\abs{\lambda_1}}{\lambda_3\dots\lambda_d}}
\,\eps^{d/2-1/4} \e^{-V(z)/\eps} 
\bigbrak{1+\Order{\eps^{1/4}\abs{\log\eps}^{5/4}}}\;.
\end{equation}
\end{theorem}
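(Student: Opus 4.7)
The theorem follows the same pattern as Theorem~\ref{thm_cap01}: reduce to a normal form near $z$, apply the general capacity formula~\eqref{main01}, and evaluate the resulting one-dimensional integrals.

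\emph{Normal form and template matching.} The Hessian at $z$ has eigenvalues $\lambda_1<0=\lambda_2<\lambda_3\leqs\dots\leqs\lambda_d$, a codimension-$1$ degeneracy. By Proposition~\ref{prop_scodim1} together with Remark~\ref{rem_scod1}, which allows a nonzero eigenvalue in the first coordinate, there is a polynomial change of variables $x=y+g(y)$ putting $V-V(z)$ into the form~\eqref{cap06}. Relabelling so that the unstable direction is $y_1$ and the quartic stable direction is $y_2$, this matches the template~\eqref{main00} with $q=2$, $u_1(y_1)=\frac12\abs{\lambda_1}y_1^2$, $u_2(y_2)=C_4 y_2^4$, and the remaining eigenvalues $\lambda_3,\dots,\lambda_d$ supplying the quadratic stable directions. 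Both $u_1$ and $u_2$ are confining, with a unique minimum at the origin, so the hypotheses of the general formula are met.

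\emph{Applying the capacity formula and evaluating the integrals.} From~\eqref{main01}, restoring the factor $\e^{-V(z)/\eps}$ by shifting $V$ back, and taking truncation radii $\delta_1,\delta_2$ of order $\sqrt{\eps\,\abs{\log\eps}}$,
\begin{equation*}
\capacity_A(B) = \eps\,
\frac{\int_{-\delta_2}^{\delta_2} \e^{-C_4 y_2^4/\eps}\,\6{y_2}}
     {\int_{-\delta_1}^{\delta_1} \e^{-\abs{\lambda_1} y_1^2/(2\eps)}\,\6{y_1}}
\prod_{j=3}^d \sqrt{\frac{2\pi\eps}{\lambda_j}}\,\e^{-V(z)/\eps}
\bigbrak{1+R(\eps)}\;.
\end{equation*}
The denominator is a truncated Gaussian and evaluates to $\sqrt{2\pi\eps/\abs{\lambda_1}}$ with subleading relative error. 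The numerator, after the rescaling $y_2=(\eps/C_4)^{1/4}s$ and using $\int_\R \e^{-s^4}\,\6{s}=\tfrac12\Gamma(1/4)$ (obtained via the substitution $u=s^4$), equals $\tfrac12(\eps/C_4)^{1/4}\Gamma(1/4)$ up to similarly small error. Collecting the powers of $\eps$, which add up to $1+\tfrac14-\tfrac12+\tfrac{d-2}{2}=\tfrac{d}{2}-\tfrac14$, and combining the remaining constants yields the claimed prefactor $\frac{\Gamma(1/4)}{2C_4^{1/4}}\sqrt{(2\pi)^{d-3}\abs{\lambda_1}/(\lambda_3\cdots\lambda_d)}$.

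\emph{Main obstacle.} The substantive work lies not in this section but in establishing the general formula~\eqref{main01}, which is deferred to Section~\ref{sec_proofs}. There the challenge is to produce matching upper and lower bounds on the Dirichlet form $\Phi_{(A\cup B)^c}(h_{A,B})$ in the presence of a \emph{stable} direction ($y_2$) where the potential is only quartic: the test function entering the variational principle~\eqref{cappot4} must be constructed so that its gradient captures the correct $\eps^{1/4}$ width of the quartic well in the $y_2$-direction, while still respecting the $\sqrt\eps$ width of the quadratic Gaussian directions; the lower bound in particular requires a Poincar\'e-type estimate on the tubular neighbourhood of the saddle adapted to this anisotropic geometry. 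Both the polynomial corrections of order $\Order{\Norm{y}^5}$ discarded by the normal form and the tail contributions beyond $\delta_1,\delta_2$ are then absorbed into the stated remainder $R(\eps)=\Order{\eps^{1/4}\abs{\log\eps}^{5/4}}$.
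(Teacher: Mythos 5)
Your route is exactly the paper's: the paper proves Theorem~\ref{thm_cap02} by remarking that it is the proof of Theorem~\ref{thm_cap01} with the r\^oles of $\delta_1$ and $\delta_2$ interchanged, i.e.\ Propositions~\ref{prop_main1-1} and~\ref{prop_main1-2} applied with $r=4$, $q=2$, $c=0$, $u_1(y_1)=\frac12\abs{\lambda_1}y_1^2$, $u_2(y_2)=C_4y_2^4$, followed by the explicit evaluation of the two one-dimensional integrals. Your template matching, the Gaussian denominator $\sqrt{2\pi\eps/\abs{\lambda_1}}$, the value $\int_\R\e^{-s^4}\,\6s=\frac12\Gamma(1/4)$, and the bookkeeping of powers of $\eps$ and constants are all correct and reproduce~\eqref{cap07}.

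There is, however, one concrete step that would fail as written: you take \emph{both} truncation radii of order $\sqrt{\eps\abs{\log\eps}}$, whereas the quartic stable direction requires $\delta_2\asymp(\eps\abs{\log\eps}/C_4)^{1/4}$ (and $\deltahat_2$ of the same order for the lower bound); only the unstable quadratic direction gets $\delta_1\asymp(\eps\abs{\log\eps}/\abs{\lambda_1})^{1/2}$. With $\delta_2\asymp(\eps\abs{\log\eps})^{1/2}$ the hypothesis $u_2(y_2)\geqs2d\eps\abs{\log\eps}$ for $\Norm{y_2}\geqs\delta_2$ in~\eqref{main1} is violated (there $C_4\delta_2^4\asymp\eps^2\abs{\log\eps}^2\ll\eps\abs{\log\eps}$), so the potential is not uniformly large on $\cS_\eps\setminus C_\eps$ and the bound $R_2(\eps)$ on the outside contribution --- hence the upper bound itself --- breaks down. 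Moreover the truncated numerator $\int_{-\delta_2}^{\delta_2}\e^{-C_4y_2^4/\eps}\,\6y_2\approx2\delta_2\asymp(\eps\abs{\log\eps})^{1/2}$ is then asymptotically \emph{smaller} than the full integral $\frac12\Gamma(1/4)(\eps/C_4)^{1/4}$ you subsequently use, so extending the integration to $\pm\infty$ is not a negligible correction at that radius and your evaluation is inconsistent with your choice of $\delta_2$; for the lower bound the same choice of $\deltahat_2$ would produce an estimate smaller than the upper bound by a factor of order $\eps^{1/4}\abs{\log\eps}^{1/2}$, so the bounds would not match. Note finally that the correct scaling of $\delta_2$ is also what produces the stated remainder: the dominant error in $R_1,R_3$ is $\eps^{-1}\delta_2^{r+1}$ with $r=4$, i.e.\ $\Order{\eps^{1/4}\abs{\log\eps}^{5/4}}$, which you could not recover with your radii. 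Once $\delta_2,\deltahat_2$ are corrected to order $(\eps\abs{\log\eps})^{1/4}$, the rest of your argument goes through and coincides with the paper's proof.
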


The proof is given in Section~\ref{ssec_nonquad}. In the case of a quadratic
local minimum $x$, combining this result with Estimate~\eqref{cappot6}
immediately yields the following result on first-hitting times.

\begin{cor}
\label{cor_cap02}
Assume $z$ is a saddle whose normal form satisfies~\eqref{cap06}. Let\/ $O$ be
one of the \pathconnected\ components of\/ $\OV{z}$, and suppose 
that the minimum of\/ $V$ in\/ $O$ is attained at a unique point $x$ and is 
quadratic. Let $B$ belong to a different \pathconnected\ component of\/
$\OV{z}$, with $G(\set{x},B)=\set{z}$. Then the expected first-hitting time of
$B$ satisfies 
\begin{equation}
\label{cap08}
\bigexpecin{x}{\tau_B} = 
\frac{2C_4^{1/4}}{\Gamma(1/4)} 
\sqrt{\frac{(2\pi)^3\lambda_3\dots\lambda_d}
{\abs{\lambda_1}\det(\hessian{V}(x))}} \eps^{1/4}
\e^{[V(z)-V(x)]/\eps} 
\bigbrak{1+\Order{\eps^{1/4}\abs{\log\eps}^{5/4}}}\;.
\end{equation}
\end{cor}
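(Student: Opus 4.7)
The plan is to derive \eqref{cap08} as a direct consequence of Theorem~\ref{thm_cap02} combined with the general representation of the expected hitting time via capacities, namely formula~\eqref{cappot5}. Since $x$ is assumed to be a quadratic local minimum, we are in the setting where~\eqref{cappot5} applies and gives
\begin{equation*}
\bigexpecin{x}{\tau_B}
= \frac{\displaystyle\int_{B^c} \e^{-V(y)/\eps}\,h_{\cB_\eps(x),B}(y)\,\6y}
{\capacity_{\cB_\eps(x)}(B)}\;.
\end{equation*}
The numerator and denominator can be treated independently, and each has already been expressed in a usable form earlier in the paper.

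For the numerator, I would invoke the standard Laplace estimate~\eqref{cappot6}, which is valid precisely because $x$ is a nondegenerate local minimum lying in a component of $\OV{z}$ bounded from $B$ by the saddle $z$: the equilibrium potential $h_{\cB_\eps(x),B}$ is close to $1$ on an $\Order{\sqrt{\eps\abs{\log\eps}}}$-neighbourhood of $x$ and is negligible outside $\CV{x}$, so the integral localises to a Gaussian around $x$. This produces the factor $(2\pi\eps)^{d/2}/\sqrt{\det(\hessian{V}(x))}\,\e^{-V(x)/\eps}$ with a relative error of order $\eps^{1/2}\abs{\log\eps}$. For the denominator I would simply insert the estimate~\eqref{cap07} supplied by Theorem~\ref{thm_cap02}, whose relative error is $\Order{\eps^{1/4}\abs{\log\eps}^{5/4}}$.

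It then remains to divide the two asymptotics and collect the powers of $\eps$ and $2\pi$. The exponent $\eps^{d/2}/\eps^{d/2-1/4}=\eps^{1/4}$, while $(2\pi)^{d/2}/\sqrt{(2\pi)^{d-3}}=(2\pi)^{3/2}$; the Hessian factor $\sqrt{\lambda_3\cdots\lambda_d/(|\lambda_1|\det\hessian V(x))}$ assembles exactly as in~\eqref{cap08}, and the constant $\Gamma(1/4)/(2C_4^{1/4})$ appearing in the capacity inverts to the stated prefactor $2C_4^{1/4}/\Gamma(1/4)$. The exponentials combine to $\e^{[V(z)-V(x)]/\eps}$. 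Finally, the two relative errors are additive, and since $\eps^{1/4}\abs{\log\eps}^{5/4}$ dominates $\eps^{1/2}\abs{\log\eps}$ as $\eps\to0$, the overall error term is $\Order{\eps^{1/4}\abs{\log\eps}^{5/4}}$, as claimed.

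In this sense the corollary is essentially a bookkeeping exercise: all the analytic effort is concentrated in Theorem~\ref{thm_cap02}. The only point that requires a brief check is that the a priori bounds on $h_{\cB_\eps(x),B}$ underlying~\eqref{cappot6} remain valid in our nonquadratic-saddle setting; this is the case because those bounds depend only on the structure of $V$ near the minimum $x$ and on having a well-defined gate $\set{z}$ between $x$ and $B$, both of which are part of our hypotheses.
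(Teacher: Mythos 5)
Your proposal is correct and is exactly the paper's route: the corollary is obtained by inserting the capacity estimate \eqref{cap07} of Theorem~\ref{thm_cap02} into the representation \eqref{cappot5}, estimating the numerator by \eqref{cappot6}, and the bookkeeping of the powers of $\eps$ and $2\pi$ and of the error terms is as you carried it out.
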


Note again the $\eps$-dependence of the prefactor, which is now proportional to
$\eps^{1/4}$ to leading order. A similar result is easily obtained in the case
of the leading term in the normal form having order $y_2^{2p}$ for some
$p\geqs2$. In particular, the prefactor of the transition time then has leading
order $\eps^{(p-1)/2p}$:
\begin{equation}
\label{cap08a}
\bigexpecin{x}{\tau_B} = 
\frac{pC_{2p}^{1/2p}}{\Gamma(1/2p)} 
\sqrt{\frac{(2\pi)^{3}\lambda_3\dots\lambda_d}
{\abs{\lambda_{1}}\det(\hessian{V}(x))}} \eps^{(p-1)/2p}
\e^{[V(z)-V(x)]/\eps} 
\bigbrak{1+\Order{\eps^{1/2p}\abs{\log\eps}^{(2p+1)/2p}}}\;.
\end{equation}
As $p$ varies from $1$ to $\infty$, i.e., as the non-quadratic stable direction
becomes flatter and flatter, the prefactor's dependence on $\eps$ changes from
order~$1$ to order~$\sqrt\eps$.


\subsection{Transition times for higher-codimension singular saddles}
\label{ssec_cap2}

We assume in this section that the potential is at least of class $\cC^{5}$.
Consider the case of a saddle $z$ such that the Hessian matrix
$\hessian V(z)$ has eigenvalues 
\begin{equation}
\label{cap00_1}
\lambda_1<0=\lambda_2=\lambda_3<\lambda_4\leqs\dots\leqs\lambda_d\;.
\end{equation}
In this case, the unstable direction is quadratic, while two of the stable
directions are non-quadratic. Proposition~\ref{prop_scodim2} shows that near the
saddle, the potential admits a normal form 
\begin{equation}
\label{cap00_2}
V(y) = -\frac12\abs{\lambda_1}y_1^2 + V_3(y_2,y_3) + V_4(y_2,y_3) + 
\frac12\sum_{j=4}^d \lambda_j y_j^2 + \Order{\Norm{y}^5}\;,
\end{equation}
with $V_3$ and $V_4$ homogeneous polynomials of degree $3$ and $4$,
respectively. If $V_3$ does not vanish identically, Corollary~\ref{cor_scodim2}
shows that $z$ is typically not a saddle. We assume thus that $V_3$ is
identically zero, and, again in view of Corollary~\ref{cor_scodim2}, that the
discriminant 
\begin{equation}
\label{cap00_3}
\Delta(t) = 
V_{2222}t^4 + V_{2223}t^3 + V_{2233}t^2 + V_{2333}t + V_{3333}
\end{equation}
has no real roots, and is positive with $V_{2222}>0$. It is convenient to
introduce polar coordinates, writing 
\begin{equation}
\label{cap00_4}
V_4(r\cos\ph,r\sin\ph) = r^4 k(\ph)\;,
\end{equation}
where we may assume that $k(\ph)$ is bounded above and below by strictly
positive constants $K_+\geqs K_-$. Then we have the following result,
which is proved in Section~\ref{ssec_nonquad}.

\begin{theorem}
\label{thm_cap001}
Assume $z$ is a saddle whose normal form satisfies~\eqref{cap00_2}, with
$V_3\equiv0$ and $V_4>0$. Suppose, the discriminant $\Delta(t)$ has no real
roots and satisfies $V_{2222}>0$. 
Let $A$ and
$B$ belong to different \pathconnected\ components of $\OV{z}$ and assume that 
$G(A,B)=\set{z}$. Then 
\begin{equation}
\label{cap00_5}
\capacity_{A}(B) = 
\frac{\sqrt{\pi}}{4} \int_0^{2\pi} \frac{\6\ph}{k(\ph)^{1/2}}
\sqrt{\frac{(2\pi)^{d-4}\abs{\lambda_1}}{\lambda_4\dots\lambda_d}} 
\,\eps^{d/2-1/2} \e^{-V(z)/\eps} 
\bigbrak{1+\Order{\eps^{1/4}\abs{\log\eps}^{5/4}}}\;.
\end{equation}
\end{theorem}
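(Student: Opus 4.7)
The plan is to apply the general capacity formula (main01) from Section~\ref{ssec_cappot}, which is proved later in Section~\ref{sec_proofs}, specialised to the normal form~\eqref{cap00_2}. The identification is $q=3$, with quadratic unstable direction $u_1(y_1)=\tfrac12\abs{\lambda_1}y_1^2$, quartic stable kernel $u_2(y_2,y_3)=V_4(y_2,y_3)$, and $d-3$ quadratic stable directions with eigenvalues $\lambda_4,\dots,\lambda_d$. The hypotheses that $V_3\equiv0$, that $\Delta(t)$ has no real root, and that $V_{2222}>0$ imply via Corollary~\ref{cor_scodim2} that $V_4$ is positive definite on $\R^2\setminus\set{0}$; together with its homogeneity of degree $4$, this yields the bound $V_4(y_2,y_3)\geqs K_-(y_2^2+y_3^2)^2$ required for the coercivity assumption underlying~\eqref{main01}. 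Shifting coordinates by $z$ produces the overall factor $\e^{-V(z)/\eps}$.

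I would take $\delta_1\sim\sqrt{\eps\abs{\log\eps}}$ and $\delta_2\sim(\eps\abs{\log\eps})^{1/4}$, so that truncation at $\pm\delta_1$ (resp.\ $\cB_{\delta_2}(0)$) introduces only a factor $1+\Order{\eps^{N}}$ for any $N$. The denominator is then the standard Gaussian
\begin{equation*}
\int_{-\delta_1}^{\delta_1}\e^{-\abs{\lambda_1}y_1^2/(2\eps)}\,\6{y_1}
= \sqrt{\frac{2\pi\eps}{\abs{\lambda_1}}}\,\bigbrak{1+o(1)}\;.
\end{equation*}
For the numerator I would pass to polar coordinates in the $(y_2,y_3)$-plane. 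Using $V_4(r\cos\ph,r\sin\ph)=r^4k(\ph)$ and the substitution $u=r^4k(\ph)/\eps$ gives
\begin{equation*}
\int_{\cB_{\delta_2}(0)}\e^{-V_4(y_2,y_3)/\eps}\,\6{y_2}\6{y_3}
=\int_0^{2\pi}\!\!\int_0^{\delta_2}\e^{-r^4k(\ph)/\eps}\,r\,\6r\,\6\ph
=\frac{\sqrt{\pi\eps}}{4}\int_0^{2\pi}\frac{\6\ph}{\sqrt{k(\ph)}}\,\bigbrak{1+o(1)}\;,
\end{equation*}
where $\Gamma(1/2)=\sqrt\pi$ and the uniform lower bound $k(\ph)\geqs K_-$ guarantees that extending $r$ to $\infty$ costs only an exponentially small error. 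Multiplying by the Gaussian factors $\prod_{j=4}^d\sqrt{2\pi\eps/\lambda_j}$ from the quadratic stable directions and combining the $\eps$-powers as $\eps\cdot\sqrt\eps\cdot\eps^{-1/2}\cdot\eps^{(d-3)/2}=\eps^{(d-1)/2}$, together with an elementary rearrangement of the constants, produces exactly the prefactor in~\eqref{cap00_5}.

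The only genuinely delicate step is the verification that the remainder in~\eqref{main01} has the stated order $\Order{\eps^{1/4}\abs{\log\eps}^{5/4}}$ in the present situation. Because the relevant scale in the flat directions is $r\sim\eps^{1/4}$, the discarded terms $\Order{\Norm{y}^5}$ in the normal form contribute at most $\Norm{y}^5/\eps\lesssim\eps^{1/4}\abs{\log\eps}^{5/4}$ to the exponent on the integration region $\set{\Norm{y}\leqs\delta_2}$, which matches the announced rate. The argument is entirely parallel to the ones used for Theorems~\ref{thm_cap01} and~\ref{thm_cap02}; the only new feature is the two-dimensional angular integration over $\ph$, and the a~priori bound $k(\ph)\geqs K_->0$ supplied by the absence of real roots of $\Delta(t)$ is precisely what makes this averaging uniform and the whole estimate go through.
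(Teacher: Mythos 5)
Your proposal is correct and follows essentially the same route as the paper: the paper's proof also applies the general upper and lower bound propositions (Propositions~\ref{prop_main1-1} and~\ref{prop_main1-2}, which underlie~\eqref{main01}) with $q=3$, $u_1(y_1)=\abs{\lambda_1}y_1^2/2$, $u_2(y_2,y_3)=V_4(y_2,y_3)$, the choices $\delta_1\sim(\eps\abs{\log\eps})^{1/2}$, $\delta_2\sim(\eps\abs{\log\eps})^{1/4}$, and then evaluates the $y_1$-integral explicitly and the $(y_2,y_3)$-integral in polar coordinates, with the error $\Order{\eps^{1/4}\abs{\log\eps}^{5/4}}$ coming from $\eps^{-1}\delta_2^{5}$ exactly as you indicate. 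Your explicit verification that the discriminant hypotheses give $k(\ph)\geqs K_->0$ and your bookkeeping of the $\eps$- and $2\pi$-powers match the paper's computation.
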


\begin{cor}
\label{cor_cap001}
Assume $z$ is a saddle whose normal form satisfies~\eqref{cap00_2}. Let\/ $O$ be
one of the \pathconnected\ components of\/ $\OV{z}$, and assume that the minimum
of\/ $V$ in\/ $O$ is reached at a unique point $x$, which is quadratic.  Let $B$
belong to a different \pathconnected\ component of\/ $\OV{z}$, with
$G(\set{x},B)=\set{z}$. Then the expected first-hitting time of $B$ satisfies 
\begin{equation}
\label{cap00_6}
\bigexpecin{x}{\tau_B} = 
\frac{4}{\sqrt{\pi}\displaystyle\int_0^{2\pi} \frac{\6\ph}{k(\ph)^{1/2}}}
\sqrt{\frac{(2\pi)^4\lambda_4\dots\lambda_d}
{\abs{\lambda_1}\det(\hessian{V}(x))}} \eps^{1/2}
\e^{[V(z)-V(x)]/\eps} 
\bigbrak{1+\Order{\eps^{1/4}\abs{\log\eps}^{5/4}}}\;.
\end{equation}
\end{cor}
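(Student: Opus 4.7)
The plan is to derive this corollary directly from Theorem~\ref{thm_cap001} together with the general potential-theoretic representation~\eqref{cappot5} and the Laplace-asymptotics estimate~\eqref{cappot6}. Indeed, nothing new needs to be proven about the saddle: the capacity asymptotics are already supplied by Theorem~\ref{thm_cap001}, and the numerator of~\eqref{cappot5} is controlled by the standard quadratic-minimum Laplace estimate since the starting minimum $x$ is assumed quadratic.

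More precisely, I would proceed as follows. First, apply the representation~\eqref{cappot5} with $A=\cB_\eps(x)$, obtaining
\begin{equation*}
\bigexpecin{x}{\tau_B} =
\frac{\displaystyle\int_{B^c} \e^{-V(y)/\eps} h_{\cB_\eps(x),B}(y)\,\6y}
{\capacity_{\cB_\eps(x)}(B)}\;.
\end{equation*}
Next, since $x$ is the unique, quadratic minimum of $V$ in its open valley $O$ and the equilibrium potential $h_{\cB_\eps(x),B}$ is close to $1$ in a neighbourhood of $x$ and negligibly small outside $O$ (this being the \emph{a priori} control already used in~\cite{BEGK}), the numerator is estimated by~\eqref{cappot6}, giving
\begin{equation*}
\int_{B^c} \e^{-V(y)/\eps} h_{\cB_\eps(x),B}(y)\,\6y =
\frac{(2\pi\eps)^{d/2}}{\sqrt{\det(\hessian{V}(x))}} \e^{-V(x)/\eps}
\bigbrak{1+\Order{\eps^{1/2}\abs{\log\eps}}}\;.
\end{equation*}
The hypothesis $G(\set{x},B)=\set{z}$ guarantees that $z$ is indeed the dominant saddle between $x$ and $B$, so that no other gate contributes.

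Then I would invoke Theorem~\ref{thm_cap001} with $A=\cB_\eps(x)$ to estimate the denominator by~\eqref{cap00_5}. Dividing the two expressions, the exponential factor produces $\e^{[V(z)-V(x)]/\eps}$, the $\eps$-powers combine as $\eps^{d/2}/\eps^{d/2-1/2} = \eps^{1/2}$, and the factors of $2\pi$ collapse as $(2\pi)^{d/2}/(2\pi)^{(d-4)/2} = (2\pi)^2$, giving the claimed prefactor
\begin{equation*}
\frac{4}{\sqrt{\pi}\displaystyle\int_0^{2\pi} \frac{\6\ph}{k(\ph)^{1/2}}}
\sqrt{\frac{(2\pi)^4\lambda_4\dots\lambda_d}{\abs{\lambda_1}\det(\hessian{V}(x))}}\;.
\end{equation*}
The relative errors from numerator and denominator combine, and since the denominator's error $\Order{\eps^{1/4}\abs{\log\eps}^{5/4}}$ dominates that of the numerator, the overall multiplicative error is of the form $1+\Order{\eps^{1/4}\abs{\log\eps}^{5/4}}$.

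The only genuinely non-mechanical step is verifying that~\eqref{cappot5} applies verbatim in our degenerate-saddle situation. This requires checking that the \emph{a priori} estimates on $h_{\cB_\eps(x),B}$ used in the proof of Proposition~6.1 of~\cite{BEGK} go through under our hypotheses; these estimates rely only on the quadratic nature of the starting minimum $x$ and on large-deviation bounds controlling $h_{\cB_\eps(x),B}$ away from $x$, neither of which is affected by the flatness of $V$ at $z$. Once this is granted, the corollary follows by the ratio computation above, so the substantive content really resides in Theorem~\ref{thm_cap001}.
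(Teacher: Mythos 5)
Your proposal is correct and follows essentially the same route as the paper, which states that the corollary follows immediately by combining the capacity asymptotics of Theorem~\ref{thm_cap001} with the representation~\eqref{cappot5} and the Laplace estimate~\eqref{cappot6} for the numerator; your bookkeeping of the $\eps$-powers, the $(2\pi)$-factors and the dominant error term $\Order{\eps^{1/4}\abs{\log\eps}^{5/4}}$ matches~\eqref{cap00_6}. Your closing observation that the only point requiring care is the validity of~\eqref{cappot5} and the a priori control of $h_{\cB_\eps(x),B}$ near the quadratic minimum is exactly the caveat the paper itself makes when citing Proposition~6.1 of~\cite{BEGK}.
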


The prefactor is now proportional to $\eps^{1/2}$ instead of being proportional
to $\eps^{1/4}$, which is explained by the presence of two vanishing
eigenvalues. 

\begin{remark}
\label{rem_cap01}
This result admits two straightforward generalisations to less generic
situations:
\begin{enum}
\item	If the potential is of class $\cC^{2p+1}$ and the first nonvanishing
coefficient of the normal form has even degree $2p$, $p\geqs2$, then a
completely
analogous proof shows that 
\begin{multline}
\label{cap00_7}
\capacity_{A}(B) = \frac1{2p} \Gamma\biggpar{\frac1p}
\int_0^{2\pi} \frac{\6\ph}{k(\ph)^{1/p}}
\sqrt{\frac{(2\pi)^{d-4}\abs{\lambda_1}}{\lambda_4\dots\lambda_d}} 
\,\eps^{d/2-(p-1)/p} \e^{-V(z)/\eps} \\
\times
\bigbrak{1+\Order{\eps^{1/2p}\abs{\log\eps}^{(2p+1)/2p}}}\;.
\end{multline}
Consequently, if the assumptions of Corollary~\ref{cor_cap001} on the minimum
$x$ and the set $B$ are satisfied, then 
\begin{multline}
\label{cap00_7b}
\bigexpecin{x}{\tau_B} = 
\frac{2p}{\displaystyle\Gamma\biggpar{\frac1p}}
\frac{1}{\displaystyle\int_0^{2\pi} \frac{\6\ph}{k(\ph)^{1/p}}}
\sqrt{\frac{(2\pi)^4\lambda_4\dots\lambda_d}
{\abs{\lambda_1}\det(\hessian{V}(x))}} \eps^{(p-1)/p}
\e^{[V(z)-V(x)]/\eps} \\
\times
\bigbrak{1+\Order{\eps^{1/2p}\abs{\log\eps}^{(2p+1)/2p}}}\;.
\end{multline}

\item	If all eigenvalues from $\lambda_2$ to $\lambda_q$ are equal to zero,
for some $q\geqs4$ and the first nonvanishing coefficient of the normal form has
even degree $2p$, $p\geqs2$, then the prefactor of the capacity has order
$\eps^{d/2-(q-1)(p-1)/2p}$, and involves a $(q-2)$-dimensional integral over the
angular part of the leading term in the normal form. 
\end{enum}
\end{remark}

The other important codimension-two singularity occurs for a saddle $z$ such
that the Hessian matrix $\hessian V(z)$ has eigenvalues 
\begin{equation}
\label{cap00_10}
0=\lambda_1=\lambda_2<\lambda_3\leqs\dots\leqs\lambda_d\;.
\end{equation}
In this case, Corollary~\ref{cor_scodim2} states that $z$ is a saddle when the
discriminant of the normal form has one or more real roots, all of them simple.
As a consequence, there can be more than two valleys meeting at the saddle. This
actually induces a serious difficulty for the estimation of the capacity. The
reason is that for this estimation, one needs an {\it a priori\/} bound on the
equilibrium potential $h_{A,B}$ in the valleys, some distance away from the
saddle. In cases with only two valleys, $h_{A,B}$ is very close to $1$ in the
valley containing $A$, and very close to $0$ in the valley containing $B$. When
there are additional valleys, however, one would first have to obtain an {\it a
priori\/} estimate on the value of $h_{A,B}$ in these valleys, which is not at
all straightforward, except perhaps in situations involving symmetries. We will
not discuss this case here. 

Arguably, the study of singular saddles satisfying~\eqref{cap00_10} is less
important, because they are less stable against perturbations of the potential.
Namely, the flatness of the potential around the saddle implies that like for
the saddle--node bifurcation, there exist perturbations of the potential
that do no longer admit a saddle close by. As a consequence, there is no
potential barrier creating metastability for these perturbations.



\section{Bifurcations}
\label{sec_bif}

While the results in the previous section describe the situation for
nonquadratic saddles, that is, at a bifurcation point, they do not incorporate 
the transition from quadratic to nonquadratic saddles. In order to complete the
picture, we now give a description of the metastable behaviour in a full
neighbourhood of a bifurcation point of a parameter-dependent potential. We will
always assume that $V$ is of class $\cC^{5}$.

We shall discuss a few typical examples of bifurcations, which we will
illustrate on
the model potential
\begin{equation}
 \label{bif01}
V_\gamma(x) = \sum_{i=1}^N U(x_i) + 
\frac{\gamma}{4}\sum_{i=1}^N (x_i-x_{i+1})^2\;, 
\end{equation} 
introduced in~\cite{BFG06a}. Here $x_i$ denotes
the position of a particle attached to site $i$ of the lattice $\Z/N\Z$, 
$U(x_i)=\frac14x_i^4-\frac12x_i^2$ is a local double-well potential acting on
that particle, and the second sum describes a harmonic ferromagnetic
interaction between neighbouring particles (with the identification
$x_{N+1}=x_1$). Indeed, for $N=2$ and $\gamma=1/2$, the origin is a nonquadratic
saddle of codimension $1$ of the potential~\eqref{bif01}, while for all
$N\geqs3$, the origin is a nonquadratic saddle of codimension $2$ when 
$\gamma=(2\sin^2(\pi/N))^{-1}$.


\begin{figure}
\vspace{7mm}
\centerline{\includegraphics*[clip=true,width=140mm]{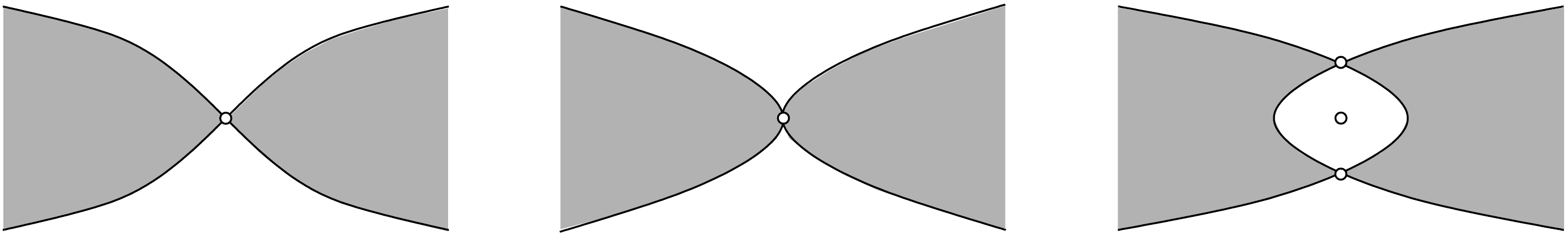}}
 \figtext{ 
	\writefig	0.3	2.8	{\bf (a)}
	\writefig	5.3	2.8	{\bf (b)}
	\writefig	10.3	2.8	{\bf (c)}
	\writefig	2.3	1.7	$z$
	\writefig	0.7	1.4	$O_-$
	\writefig	3.7	1.4	$O_+$
	\writefig	7.3	1.8	$z$
	\writefig	5.7	1.4	$O_-$
	\writefig	8.7	1.4	$O_+$
	\writefig	12.45	1.45	$z$
	\writefig	12.25	0.7	$z_-$
	\writefig	12.25	2.2	$z_+$
	\writefig	10.7	1.4	$O_-$
	\writefig	13.7	1.4	$O_+$
}
\caption[]{Saddles and open valleys of the normal-form
potential~\eqref{pitch01}, in a two-dimensional case, {\bf (a)} for
$\lambda_2>0$, {\bf (b)} for $\lambda_2=0$ and {\bf (c)} for $\lambda_2<0$. The
system undergoes a transversal pitchfork bifurcation at $\lambda_2=0$.} 
\label{fig_transpitch}
\end{figure}

\subsection{Transversal symmetric pitchfork bifurcation}
\label{ssec_pitch}

Let us assume that the potential $V$ depends continuously on a parameter
$\gamma$, and that for $\gamma=\gamma^\star$, $z=0$ is a nonquadratic saddle of
$V$, with normal form~\eqref{cap06}. A symmetric pitchfork bifurcation occurs
when for $\gamma$ near $\gamma^\star$, the normal form has the expression
\begin{equation}
\label{pitch01}
V(y) = \frac12 \lambda_1(\gamma)y_1^2 + \frac12
\lambda_2(\gamma)y_2^2 + C_4(\gamma) y_2^4 + \frac12 \sum_{j=3}^d
\lambda_j(\gamma) y_j^2 
+ \Order{\Norm{y}^5}\;,
\end{equation}
where $\lambda_2(\gamma^\star)=0$, while $\lambda_1(\gamma^\star)<0$,
$C_4(\gamma^\star)>0$, and similarly for the other quantities. We assume
here that $V$ is even in $y_2$, which is the most common situation in which
pitchfork bifurcations are observed. For simplicity, we shall usually refrain
from indicating the $\gamma$-dependence of the eigenvalues in the sequel. All
quantities except $\lambda_2$ are assumed to be bounded away from zero as
$\gamma$ varies.

When $\lambda_2>0$, $z=0$ is a quadratic saddle. When $\lambda_2<0$, $z=0$ is no
longer a saddle (the origin then having a two-dimensional unstable manifold),
but there exist two saddles $z_\pm$ with coordinates
\begin{equation}
\label{pitch02}
z_\pm = 
\bigpar{0,\pm\sqrt{\abs{\lambda_2}/4C_4}+\Order{\lambda_2}
,0,\dots,0}+\Order{\lambda_2^{2}}
\end{equation}
(\figref{fig_transpitch}).
Let us denote the eigenvalues of $\hessian{V}(z_\pm)$ by $\mu_1,\dots,\mu_d$. 
In fact, for $\lambda_2<0$ we have
\begin{align}
\nonumber
\mu_2 &= -2\lambda_2 + \Order{\abs{\lambda_2}^{3/2}}\;, \\
\mu_j &= \lambda_j+\Order{\abs{\lambda_2}^{3/2}}
&&\text{for $j\in\set{1,3,\dots,d}$\;.}
\label{pitch03}
\end{align}
Finally, the value of the potential on the saddles $z_\pm$ satisfies 
\begin{equation}
\label{pitch04}
V(z_+) = V(z_-) = V(z) - \frac{\lambda_2^2}{16C_4} +
\Order{\abs{\lambda_2}^{5/2}}\;.
\end{equation}

\begin{figure}
\centerline{\includegraphics*[clip=true,width=140mm]{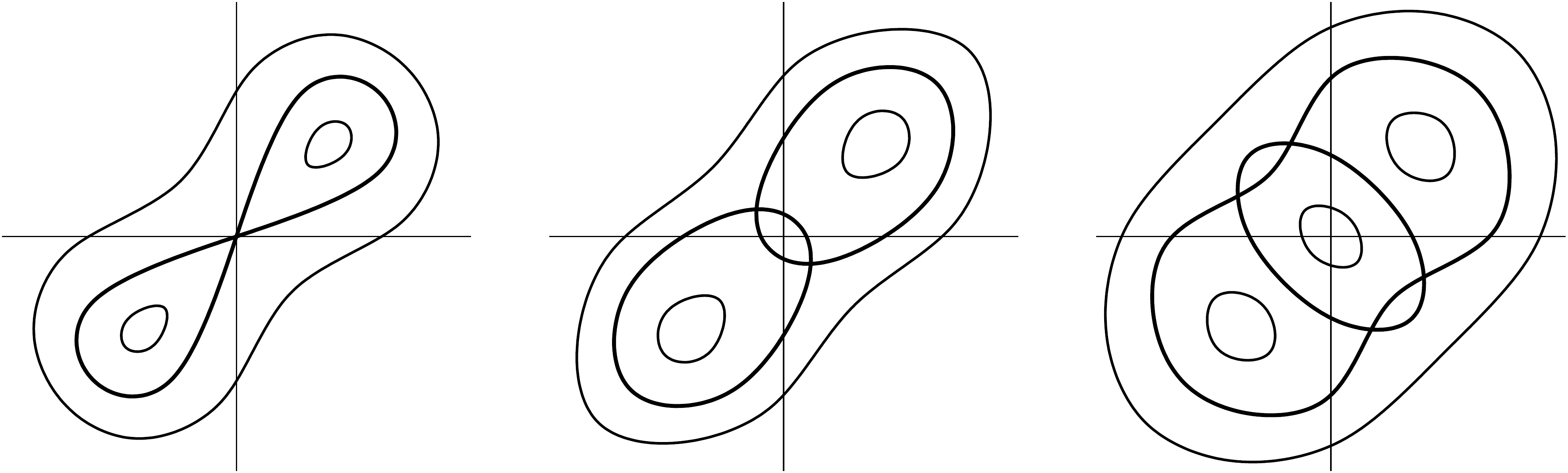}}
 \figtext{ 
	\writefig	0.3	4.5	{\bf (a)}
	\writefig	5.2	4.5	{\bf (b)}
	\writefig	10.1	4.5	{\bf (c)}
 }
\caption[]{Level lines of the potential~\eqref{pitchex1}, {\bf (a)} for
$\gamma>1/2$, {\bf (b)} for $1/2>\gamma>1/3$ and {\bf (c)} for $1/3>\gamma>0$.
A transversal pitchfork bifurcation occurs at $\gamma=1/2$, and a longitudinal
pitchfork bifurcation occurs at $\gamma=1/3$.}
\label{fig_N2pot}
\end{figure}

\begin{example}
\label{ex_pitch1}
For $N=2$ particles, the potential~\eqref{bif01} reads 
\begin{equation}
\label{pitchex1}
V(x_1,x_2) = U(x_1) + U(x_2) + \frac{\gamma}{2}(x_1-x_2)^2\;.
\end{equation}
Performing a rotation by $\pi/4$ yields the equivalent potential  
\begin{equation}
\label{pitchex2}
\widehat V(y_1,y_2) = 
-\frac12 y_1^2 - \frac{1-2\gamma}{2}y_2^2 + \frac18 (y_1^4+6y_1^2y_2^2+y_2^4)\;,
\end{equation}
which immediately shows that the origin $(0,0)$ is a stationary point with
$\lambda_1(\gamma)=-1$ and $\lambda_2(\gamma)=-(1-2\gamma)$. For
$\gamma>\gamma^\star=1/2$, the origin is thus a quadratic saddle, at
\lq\lq altitude\rq\rq~$0$. It serves as a gate between the local minima located
at
$y=(\pm1/\sqrt2,0)$. As $\gamma$ decreases below $\gamma^\star$, two new saddles
appear at \mbox{$y=(0,\pm\sqrt{2(1-2\gamma)}$} (cf.~\figref{fig_N2pot} which
shows the potential's level lines in the original variables $(x_{1},x_{2})$).
They have a
positive eigenvalue
$\mu_2(\gamma)=2(2\gamma-1)$, and the \lq\lq altitude\rq\rq\
$-\frac12(1-2\gamma)^2$. There is
thus a pitchfork bifurcation at $\gamma=1/2$. Note that another pitchfork
bifurcation, affecting the new saddles, occurs at $\gamma=1/3$.
\end{example}

Our main result is the following sharp estimate of the capacity.

\begin{figure}
\centerline{\includegraphics*[clip=true,width=75mm]{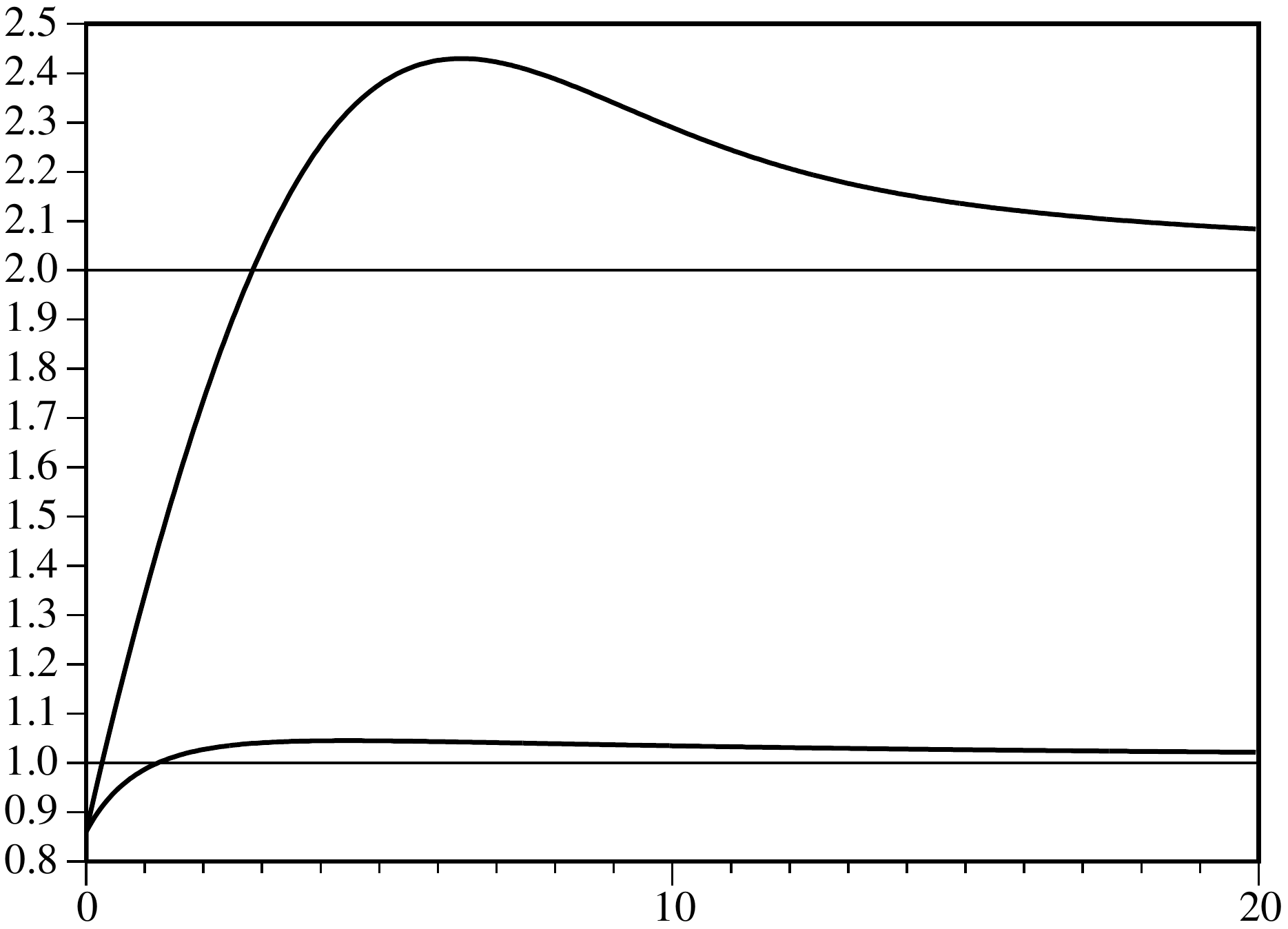}
\includegraphics*[clip=true,width=75mm]{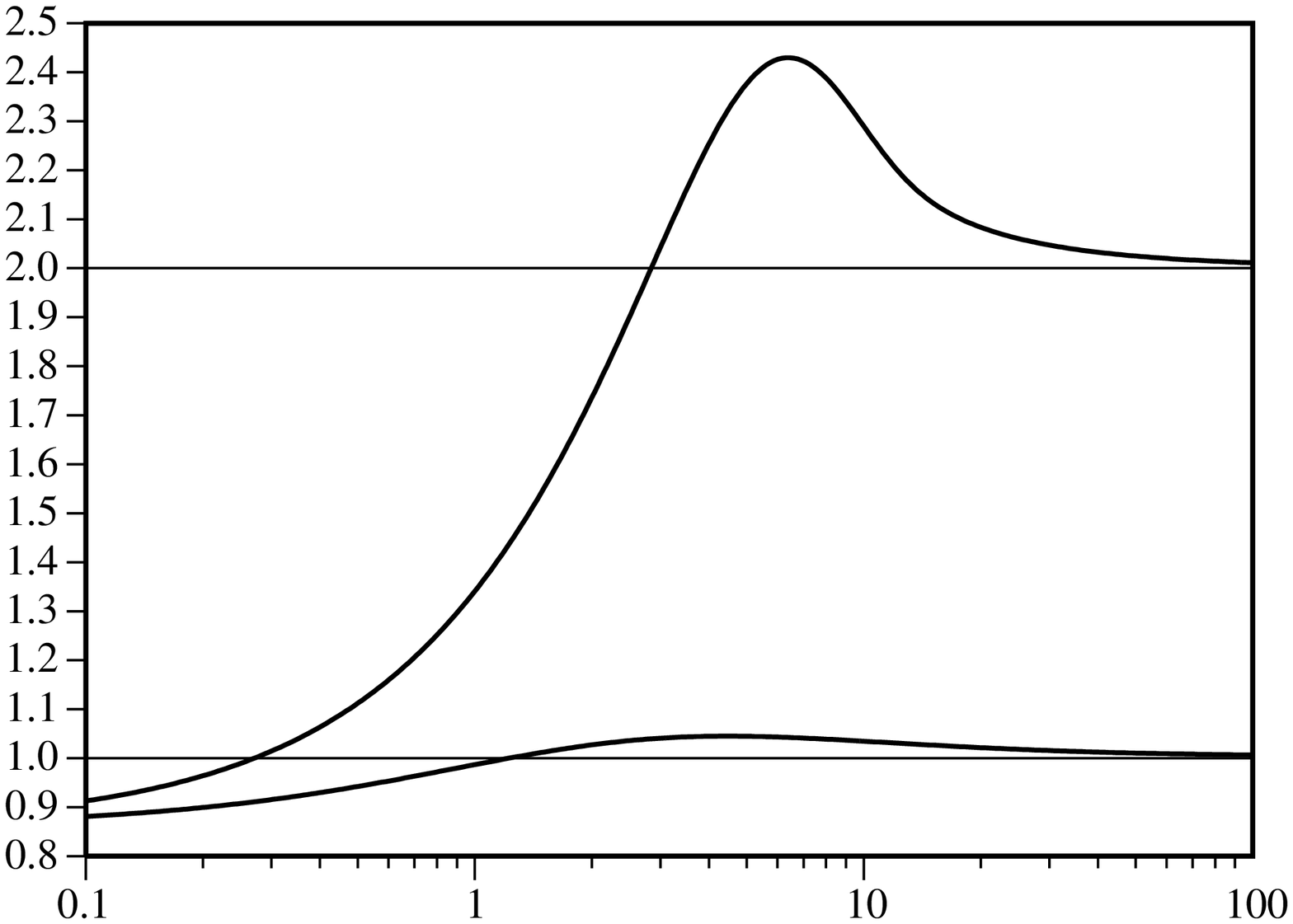}}
 \figtext{ 
	\writefig	3.0	1.8	$\Psi_+(\alpha)$
	\writefig	4.4	5.0	$\Psi_-(\alpha)$
	\writefig	11.0	1.8	$\Psi_+(\alpha)$
	\writefig	10.3	4.8	$\Psi_-(\alpha)$
 }
\caption[]{The functions $\Psi_\pm(\alpha)$, shown on a linear and on a
logarithmic scale.}
\label{fig_Psi}
\end{figure}

\begin{theorem}
\label{thm_pitch1}
Assume the normal form at $z=0$ satisfies~\eqref{pitch01}. Let $A$ and
$B$ belong to different \pathconnected\ components of $\OV{z}$ (respectively of
$\OV{z_\pm}$ if $\lambda_2<0$). Assume further that $G(A,B)=\set{z}$
(resp.\ $G(A,B)=\set{z_-,z_+}$ if $\lambda_2<0$). Then for $\lambda_2>0$, 
\begin{equation}
\label{pitch05}
\capacity_A(B) = \sqrt{\frac{(2\pi)^{d-2}\abs{\lambda_1}}
{\brak{\lambda_2+(2\eps C_4)^{1/2}}\lambda_3\dots\lambda_d}}\,
\Psi_+\biggpar{\frac{\lambda_2}{(2\eps C_4)^{1/2}}} \eps^{d/2} 
\e^{-V(z)/\eps}
\bigbrak{1+R_+(\eps,\lambda_2)}\;,
\end{equation}
while for $\lambda_2<0$, 
\begin{equation}
\label{pitch06}
\capacity_A(B) = \sqrt{\frac{(2\pi)^{d-2}\abs{\mu_1}}
{\brak{\mu_2+(2\eps C_4)^{1/2}}\mu_3\dots\mu_d}}\,
\Psi_-\biggpar{\frac{\mu_2}{(2\eps C_4)^{1/2}}} \eps^{d/2} 
\e^{-V(z_\pm)/\eps}
\bigbrak{1+R_-(\eps,\mu_2)}\;. 
\end{equation}
The functions $\Psi_+$ and $\Psi_-$ are bounded above and below uniformly on
$\R_+$. They admit the explicit expressions 
\begin{align}
\nonumber
\Psi_+(\alpha) &= \sqrt{\frac{\alpha(1+\alpha)}{8\pi}} \e^{\alpha^2/16}
K_{1/4}\biggpar{\frac{\alpha^2}{16}}\;,\\
\Psi_-(\alpha) &= \sqrt{\frac{\pi\alpha(1+\alpha)}{32}} \e^{-\alpha^2/64}
\biggbrak{I_{-1/4}\biggpar{\frac{\alpha^2}{64}}
+I_{1/4}\biggpar{\frac{\alpha^2}{64}}}\;,
\label{pitch07}
\end{align}
where $K_{1/4}$ and $I_{\pm1/4}$ denote the modified Bessel functions of the
second
and first kind, respectively. In particular,
\begin{equation}
\label{pitch08}
\lim_{\alpha\to+\infty} \Psi_+(\alpha) = 1\;,
\qquad
\lim_{\alpha\to+\infty} \Psi_-(\alpha) = 2\;,
\end{equation}
and 
\begin{equation}
\label{pitch09}
\lim_{\alpha\to0} \Psi_+(\alpha) = \lim_{\alpha\to0} \Psi_-(\alpha) 
= \frac{\Gamma(1/4)}{2^{5/4}\sqrt{\pi}} \simeq 0.8600\;.
\end{equation}
Finally, the error terms satisfy 
\begin{equation}
\label{pitch10}
\bigabs{R_\pm(\eps,\lambda)} \leqs C 
\biggbrak{
\frac{\eps\abs{\log\eps}^3}
{\max\bigset{\abs{\lambda}, (\eps\abs{\log\eps})^{1/2}}}}^{1/2} 
\;.
\end{equation}
\end{theorem}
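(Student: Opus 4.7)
The plan is to reduce $\capacity_A(B)$ to a one-dimensional variational problem along the unstable direction $y_1$, and then to evaluate the transversal $y_2$-integral in closed form using modified Bessel functions. This single integral encodes the full crossover through the bifurcation and removes any case distinction between saddles of quadratic and quartic type. First I would place the potential in the normal form~\eqref{pitch01} centered at the origin, so that up to an $\Order{\Norm{y}^5}$ remainder, $u_1(y_1) = \frac{1}{2}\abs{\lambda_1} y_1^2$ is the unstable well and $u_2(y_2) = \frac{1}{2}\lambda_2 y_2^2 + C_4 y_2^4$ is the transversal profile. Applying the reduction~\eqref{main01} (whose proof is the content of Section~\ref{sec_proofs}) then expresses the capacity, up to a factor $1+R$, as a ratio of the transversal integral $I_2(\lambda_2) = \int \e^{-u_2/\eps}\,\6y_2$ to the unstable Gaussian $\int \e^{-u_1/\eps}\,\6y_1 \approx \sqrt{2\pi\eps/\abs{\lambda_1}}$, multiplied by the remaining Gaussian prefactors $\prod_{j\geqs 3}\sqrt{2\pi\eps/\lambda_j}$. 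Crucially, for $\lambda_2<0$ the origin is no longer a saddle, but I would still work in these coordinates and keep the full quartic in $y_2$: this handles the joint contribution of the two saddles $z_\pm$ uniformly, in particular in the critical window $\abs{\lambda_2}\leqs \sqrt{\eps C_4}$ where separate Gaussian approximations at $z_\pm$ would overlap and fail.

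The key computation is the evaluation of $I_2(\lambda_2)$. With the substitution $u = y_2^2$,
\begin{equation*}
I_2(\lambda_2) = \int_0^\infty u^{-1/2}\, \e^{-(\lambda_2 u/2 + C_4 u^2)/\eps}\,\6u\;.
\end{equation*}
By the classical formula (Gradshteyn--Ryzhik 3.462.1) this equals $\sqrt{\pi}\,\eps^{1/4}/(2C_4)^{1/4}$ times $\e^{\alpha^2/16}\,D_{-1/2}(\alpha/2)$, where $\alpha = \lambda_2/\sqrt{2\eps C_4}$ and $D_{-1/2}$ denotes the parabolic cylinder function. For $\lambda_2>0$, the identity $D_{-1/2}(z) = \sqrt{z/(2\pi)}\,K_{1/4}(z^2/4)$ produces the $K_{1/4}$-expression in $\Psi_+$; for $\lambda_2<0$, analytic continuation of $D_{-1/2}$ to negative argument gives a combination of $I_{-1/4}$ and $I_{1/4}$, yielding $\Psi_-$. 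The shift of the exponential baseline from $V(z)$ to $V(z_\pm) = V(z) - \lambda_2^2/(16 C_4) + \Order{\abs{\lambda_2}^{5/2}}$ accounts for the $\e^{-\alpha^2/64}$ factor in $\Psi_-$. The boundary limits~\eqref{pitch08} and~\eqref{pitch09} then follow from the standard small- and large-argument asymptotics of $K_{1/4}$ and $I_{\pm 1/4}$; in particular $\Psi_-\to 2$ as $\alpha\to\infty$ reflects the two saddles contributing in parallel, while the common value $\Gamma(1/4)/(2^{5/4}\sqrt\pi)$ at $\alpha=0$ reproduces the prefactor of Theorem~\ref{thm_cap02}.

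The upper and lower bounds needed in~\eqref{cappot4} follow the template developed in Section~\ref{sec_proofs}: an upper bound from a test function $h(y)=f(y_1)$ on a neighbourhood of the gate, whose one-dimensional Euler--Lagrange optimizer $f'(y_1)\propto \e^{-\abs{\lambda_1} y_1^2/(2\eps)}$ reproduces precisely the integral $I_2$; and a lower bound from Cauchy--Schwarz on $y_1$-slices combined with \emph{a priori} estimates showing that the true equilibrium potential $h_{A,B}$ is exponentially close to $1$ on the $A$-side of the gate and to $0$ on the $B$-side.

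The hard part will be the uniform error bound~\eqref{pitch10} across $\lambda_2=0$. The transversal scale on which the quartic is active is $\max\bigset{\sqrt{\eps/\abs{\lambda_2}},(\eps/C_4)^{1/4}}$, which diverges as $\lambda_2\to 0$, so $\delta_2$ must be chosen of order $(\eps\abs{\log\eps})^{1/2}/\sqrt{\abs{\lambda_2}}$ or $(\eps\abs{\log\eps}/C_4)^{1/4}$ (whichever is larger), so that the $\Order{\Norm{y}^5}$ normal-form remainder is negligible after exponentiation while the truncated integral still equals $I_2$ up to exponentially small errors. The $\abs{\log\eps}^3$ in the numerator of~\eqref{pitch10} and the $\max$ in its denominator reflect exactly this competition between the two scales. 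The \emph{a priori} estimates of $h_{A,B}$ must likewise hold on this $\lambda_2$-dependent neighbourhood, which requires a Harnack-type argument uniform in the bifurcation parameter.
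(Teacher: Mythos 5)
Your skeleton is the same as the paper's: reduce the capacity, via the two-sided bounds of Propositions~\ref{prop_main1-1} and~\ref{prop_main1-2}, to the one-dimensional transversal integral of $\e^{-u_2/\eps}$ with $u_1(y_1)=\frac12\abs{\lambda_1}y_1^2$ and $u_2(y_2)=\frac12\lambda_2y_2^2+C_4y_2^4$, and then identify that integral with $\Psi_\pm$. Your route to the Bessel expressions (substituting $u=y_2^2$, invoking the parabolic cylinder representation and the identities linking $D_{-1/2}$ to $K_{1/4}$ and $I_{\pm1/4}$) differs from the paper's, which characterises the integral through the differential equation in~\eqref{proof10}; both are legitimate and give the same $\Psi_\pm$. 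For $\lambda_2\geqs0$, and for negative $\lambda_2$ with $\abs{\lambda_2}\leqs(\eps\abs{\log\eps})^{1/2}$, your argument essentially matches Propositions~\ref{prop_proof1} and~\ref{prop_proof2}.

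The gap is in the regime $\lambda_2<0$ with $\mu_2\gg(\eps\abs{\log\eps})^{1/2}$, which the theorem also covers. There your uniform, origin-centred treatment cannot be pushed through as stated. First, the hypotheses~\eqref{main1} of Proposition~\ref{prop_main1-1} fail: $u_2$ now has two wells of depth $\lambda_2^2/(16C_4)\gg\eps\abs{\log\eps}$ located at distance of order $\abs{\lambda_2}^{1/2}$ from the origin, so no choice of a constant $c$ and of $\delta_2$ compatible with~\eqref{main1A} makes an origin-centred box work; this is exactly why the paper splits the $y_2$-integral and localises in windows of width $(\eps\abs{\log\eps}/\mu_2)^{1/2}$ around $y_2=\pm(\mu_2/8C_4)^{1/2}$, i.e.\ around the true saddles $z_\pm$, and why~\eqref{pitch06} is phrased in terms of the exact quantities $\mu_1,\dots,\mu_d$ and $V(z_\pm)$. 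Second, your replacement of the exponential baseline via $V(z_\pm)=V(z)-\lambda_2^2/(16C_4)+\Order{\abs{\lambda_2}^{5/2}}$ is not accurate enough: the neglected term sits in the exponent and produces a multiplicative error $\exp\bigl(\Order{\abs{\lambda_2}^{5/2}}/\eps\bigr)$, which is incompatible with the claimed bound~\eqref{pitch10} as soon as $\abs{\lambda_2}$ exceeds roughly $\eps^{2/5}$, whereas the theorem allows $\lambda_2$ bounded away from zero. The same remark applies to passing from $\lambda_j$, $2\abs{\lambda_2}$ to $\mu_j$, $\mu_2$ only through~\eqref{pitch02}--\eqref{pitch04}: in this regime the relevant Taylor data are those at $z_\pm$, not the quartic normal form at $z=0$. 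Finally, two smaller slips: the transversal scale does not diverge as $\lambda_2\to0$ (it saturates at order $(\eps/C_4)^{1/4}$), and your prescription for $\delta_2$ (\lq\lq whichever is larger\rq\rq) is backwards --- for $\abs{\lambda_2}\gg(\eps\abs{\log\eps})^{1/2}$ one must take the smaller scale $(\eps\abs{\log\eps}/\abs{\lambda_2})^{1/2}$, centred at the wells, since taking the larger one violates~\eqref{main1A} and ruins the control of the normal-form remainder.
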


The functions $\Psi_\pm(\alpha)$ are universal, in the sense that they will be
the same for all symmetric pitchfork bifurcations, regardless of the details of
the system. They are shown in \figref{fig_Psi}. Note in
particular that they are not monotonous, but both admit a maximum. 

\begin{cor}
\label{cor_pitch1}
Assume the normal form at $z=0$ satisfies~\eqref{pitch01}. 
\begin{itemiz}
\item	If $\lambda_2>0$, assume that the minimum of\/ $V$ in one of the
\pathconnected\ components of\/ $\OV{z}$ is reached at a unique point
$x$, which is quadratic. Let $B$ belong to a different \pathconnected\ component
of\/ $\OV{z}$, with $G(\set{x},B)=\set{z}$. Then the expected first-hitting time
of $B$ satisfies 
\begin{equation}
\label{pitch11}
\bigexpecin{x}{\tau_B} = 
2\pi \sqrt{\frac{\brak{\lambda_2+(2\eps C_4)^{1/2}}\lambda_3\dots\lambda_d}
{\abs{\lambda_1}\det(\hessian{V}(x))}}
\frac{\e^{[V(z)-V(x)]/\eps}}{\Psi_+\bigpar{\lambda_2/(2\eps C_4)^{1/2}}}
\bigbrak{1+R_+(\eps,\lambda_2)}\;.
\end{equation}
 
\item	If $\lambda_2<0$, assume that the minimum of\/ $V$ in one of the
\pathconnected\ components of\/ $\OV{z_+}=\OV{z_-}$ is reached at a unique point
$x$,
which is quadratic. Let $B$ belong to a different \pathconnected\ component of\/
$\OV{z_\pm}$, with $G(\set{x},B)=\set{z_+,z_-}$. Then the expected first-hitting
time of $B$ satisfies 
\begin{equation}
\label{pitch12}
\bigexpecin{x}{\tau_B} = 
2\pi \sqrt{\frac{\brak{\mu_2+(2\eps C_4)^{1/2}}\mu_3\dots\mu_d}
{\abs{\mu_1}\det(\hessian{V}(x))}}
\frac{\e^{[V(z_\pm)-V(x)]/\eps}}{\Psi_-\bigpar{\mu_2/(2\eps C_4)^{1/2}}}
\bigbrak{1+R_-(\eps,\mu_2)}\;.
\end{equation}
\end{itemiz}
\end{cor}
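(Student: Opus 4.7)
The plan is to deduce Corollary~\ref{cor_pitch1} from Theorem~\ref{thm_pitch1} in exactly the same manner as Corollary~\ref{cor_cap01} was obtained from Theorem~\ref{thm_cap01}, namely by combining the capacity estimate with the potential-theoretic representation~\eqref{cappot5} of the mean first-hitting time. Specifically, taking $A=\cB_\eps(x)$ in~\eqref{cappot5} gives
\begin{equation*}
\bigexpecin{x}{\tau_B} =
\frac{\displaystyle\int_{B^c} \e^{-V(y)/\eps}h_{\cB_\eps(x),B}(y)\,\6y}
{\capacity_{\cB_\eps(x)}(B)}\;,
\end{equation*}
so it remains to estimate the numerator and then divide by the appropriate capacity formula from Theorem~\ref{thm_pitch1}.

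For the numerator, I would invoke~\eqref{cappot6}: since $x$ is assumed to be a \emph{quadratic} local minimum of $V$ lying in a \pathconnected\ component of $\OV{z}$ (or of $\OV{z_\pm}$) and since the equilibrium potential $h_{\cB_\eps(x),B}$ is close to $1$ on a \nbh\ of $x$ and exponentially small outside $\CV{x}$, a standard Laplace asymptotic expansion yields
\begin{equation*}
\int_{B^c} \e^{-V(y)/\eps}h_{\cB_\eps(x),B}(y)\,\6y =
\frac{(2\pi\eps)^{d/2}}{\sqrt{\det(\hessian{V}(x))}} \e^{-V(x)/\eps}
\bigbrak{1+\Order{\eps^{1/2}\abs{\log\eps}}}\;.
\end{equation*}
The hypothesis $G(\set{x},B)=\set{z}$ (respectively $\set{z_+,z_-}$) ensures that the saddle(s) treated in Theorem~\ref{thm_pitch1} do form the gate between $\cB_\eps(x)$ and $B$, so the capacity estimates~\eqref{pitch05} and~\eqref{pitch06} apply directly.

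The proof is then purely algebraic: in the case $\lambda_2>0$, I divide the numerator by the right-hand side of~\eqref{pitch05}, which produces the prefactor
\begin{equation*}
\frac{(2\pi\eps)^{d/2}/\sqrt{\det(\hessian{V}(x))}}
{\sqrt{(2\pi)^{d-2}\abs{\lambda_1}/\bigbrak{\lambda_2+(2\eps C_4)^{1/2}}\lambda_3\dots\lambda_d}\,\Psi_+(\cdot)\eps^{d/2}}
=
\frac{2\pi}{\Psi_+(\cdot)}\sqrt{\frac{\brak{\lambda_2+(2\eps C_4)^{1/2}}\lambda_3\dots\lambda_d}{\abs{\lambda_1}\det(\hessian{V}(x))}}\;,
\end{equation*}
which combined with the exponential factor $\e^{[V(z)-V(x)]/\eps}$ yields~\eqref{pitch11}. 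The case $\lambda_2<0$ is entirely analogous using~\eqref{pitch06}, with $(\lambda_i)$ replaced by $(\mu_i)$ and $V(z)$ by $V(z_\pm)$; here the assumption that $\OV{z_+}$ and $\OV{z_-}$ share the same \pathconnected\ component containing $x$ guarantees that both saddles contribute in parallel (capacities of saddles in parallel add, as remarked after Corollary~\ref{cor_cap01}), which is already built into~\eqref{pitch06}.

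The main point requiring care is the compatibility of the error terms: the numerator carries a relative error of order $\eps^{1/2}\abs{\log\eps}$, while the capacity comes with $R_\pm(\eps,\lambda)$ bounded as in~\eqref{pitch10}. Since the latter is of order at least $(\eps\abs{\log\eps}^{3})^{1/4}$ when $\abs{\lambda}\gg(\eps\abs{\log\eps})^{1/2}$ and no better than $\eps^{1/4}\abs{\log\eps}^{5/4}$ when $\lambda$ is close to zero, it dominates the numerator's $\Order{\eps^{1/2}\abs{\log\eps}}$ error uniformly in $\lambda_2$ (resp.\ $\mu_2$), so the combined relative error is simply $R_\pm(\eps,\lambda_2)$ (resp.\ $R_\pm(\eps,\mu_2)$), as stated. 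There is no real obstacle here beyond bookkeeping of the error terms.
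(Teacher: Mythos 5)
Your proposal is correct and follows exactly the route the paper intends: the corollary is obtained by dividing the Laplace estimate~\eqref{cappot6} for the numerator of~\eqref{cappot5} by the capacity asymptotics~\eqref{pitch05} and~\eqref{pitch06} of Theorem~\ref{thm_pitch1}, with the numerator's $\Order{\eps^{1/2}\abs{\log\eps}}$ relative error absorbed into $R_\pm$. (Your stated size of the $R_\pm$ bound for $\abs{\lambda}\gg(\eps\abs{\log\eps})^{1/2}$ is slightly off --- for $\abs{\lambda}$ of order $1$ it is $\eps^{1/2}\abs{\log\eps}^{3/2}$ rather than $(\eps\abs{\log\eps}^{3})^{1/4}$ --- but since this still dominates $\eps^{1/2}\abs{\log\eps}$, the error bookkeeping and hence the conclusion stand.)
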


\begin{figure}
\centerline{\includegraphics*[clip=true,width=75mm]{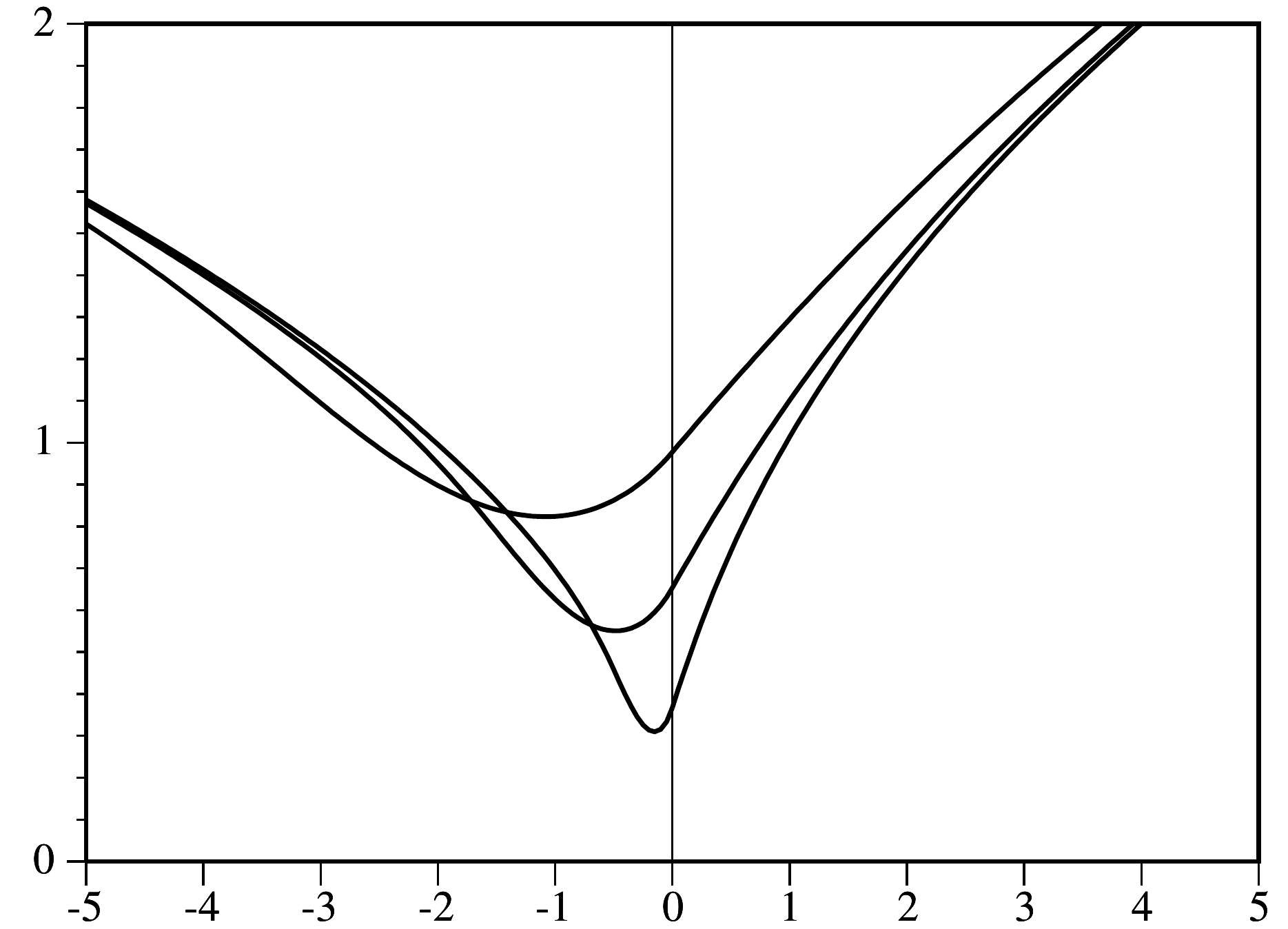}}
 \figtext{ 
	\writefig	10.5	1.0	$\lambda_2$
	\writefig	7.8	4.8	$\eps=0.5$
	\writefig	5.6	2.3	$\eps=0.1$
	\writefig	7.8	2.0	$\eps=0.01$
 }
\caption[]{The prefactor of the expected transition time near a pitchfork
bifurcation, as a function of the bifurcation parameter $\lambda_2$, shown for
three different values of $\eps$. (To be precise, we show the function
$\lambda_2\mapsto\sqrt{\lambda_2+\eps^{1/2}}/\Psi_+(\lambda_2/\eps^{1/2})$ for
$\lambda_2>0$ and the function
$\lambda_2\mapsto\sqrt{-2\lambda_2+\eps^{1/2}}/\Psi_-(-2\lambda_2/\eps^{1/2})$
for $\lambda_2<0$.)}
\label{fig_Psi2}
\end{figure}

When $\lambda_2$ is bounded away from zero, the expression~\eqref{pitch11}
reduces to the usual Eyring--Kramers formula~\eqref{intro3}. When
$\lambda_2\to0$, it converges to the limiting expression~\eqref{cap08}. The
function $\Psi_+$ controls the crossover between the two regimes, which takes
place when $\lambda_2$ is of order $\eps^{1/2}$. In fact, when
$\lambda_2\ll\eps^{1/2}$, there is a saturation effect, in the sense that the
system behaves as if the curvature of the potential were bounded below by
$(2\eps C_4)^{1/2}$. Similar remarks apply to the expression~\eqref{pitch12},
the only difference being a factor $1/2$ in the prefactor when $\mu_2$ is
bounded away from $0$ (cf.~\eqref{pitch08}), which is due to the fact that the
gate between $x$ and $B$ then contains two saddles.

The $\lambda_2$-dependence of the prefactor is shown in~\figref{fig_Psi2}. It
results from the combined effect of the term under the square root and the
factors $\Psi_\pm$. Note in particular that the minimal value of the
prefactor is located at a negative value of $\lambda_2$, which can be shown to
be of order $\eps^{1/2}$.


\subsection{Longitudinal symmetric pitchfork bifurcation}
\label{ssec_lpitch}

Consider now the case where for $\gamma=\gamma^\star$, $z=0$ is a nonquadratic
saddle of $V$, with normal form~\eqref{cap01}. Then a slightly different
variant of symmetric pitchfork bifurcation occurs, when for $\gamma$ near
$\gamma^\star$, the normal form has the expression
\begin{equation}
\label{lpitch01}
V(y) = \frac12 \lambda_1(\gamma)y_1^2 - C_4(\gamma) y_1^4 + \frac12 \sum_{j=2}^d
\lambda_j(\gamma) y_j^2 
+ \Order{\Norm{y}^5}\;,
\end{equation}
where $\lambda_1(\gamma^\star)=0$, while 
$C_4(\gamma^\star)>0$ and $\lambda_j(\gamma^\star)>0$ for all $j\geqs2$. We
assume here that $V$ is even in $y_1$, which is the most common situation in
which pitchfork bifurcations are observed. As before, we no longer indicate the
$\gamma$-dependence of eigenvalues, and all quantities except $\lambda_1$ are
assumed to be bounded away from zero.

\begin{figure}
\vspace{7mm}
\centerline{\includegraphics*[clip=true,width=140mm]{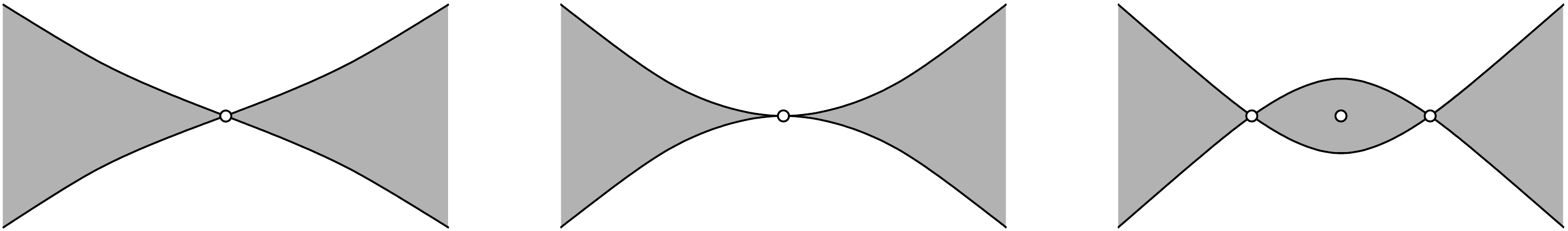}}
 \figtext{ 
	\writefig	0.3	2.8	{\bf (a)}
	\writefig	5.3	2.8	{\bf (b)}
	\writefig	10.3	2.8	{\bf (c)}
	\writefig	2.3	1.7	$z$
	\writefig	0.7	1.4	$O_-$
	\writefig	3.7	1.4	$O_+$
	\writefig	7.3	1.7	$z$
	\writefig	5.7	1.4	$O_-$
	\writefig	8.7	1.4	$O_+$
	\writefig	11.35	1.75	$z_-$
	\writefig	12.95	1.75	$z_+$
	\writefig	10.7	1.4	$O_-$
	\writefig	12.45	1.4	$O_0$
	\writefig	13.7	1.4	$O_+$
}
\caption[]{Saddles and open valleys of the normal-form
potential~\eqref{lpitch01}, {\bf (a)} for $\lambda_1<0$, {\bf (b)} for
$\lambda_1=0$ and {\bf (c)} for $\lambda_1>0$. The system undergoes a
longitudinal pitchfork bifurcation at $\lambda_1=0$.}
\label{fig_longpitch}
\end{figure}

When $\lambda_1<0$, $z=0$ is a quadratic saddle. When $\lambda_1>0$, $z=0$ is a
local minimum, but there exist two saddles $z_\pm$ with coordinates
\begin{equation}
\label{lpitch02}
z_\pm = 
\bigpar{\pm\sqrt{\lambda_1/4C_4}+\Order{\lambda_1},0,\dots,0}
+\Order{\lambda_1^{2}}
\;.
\end{equation}
The open valleys of $z_+$ and $z_-$ share a \pathconnected\ component which we
denote by $O_0$, while we denote their other components by $O_\pm$,
cf.~\figref{fig_longpitch}.  
Let us denote the eigenvalues of $\hessian{V}(z_\pm)$ by $\mu_1,\dots,\mu_d$. 
For $\lambda_1>0$ we have
\begin{align}
\nonumber
\mu_1 &= -2\lambda_1 + \Order{\lambda_1^{3/2}}\;, \\
\mu_j &= \lambda_j+\Order{\lambda_1^{3/2}}
&&\text{for $j\in\set{2,\dots,d}$\;.}
\label{lpitch03}
\end{align}
Finally, the value of the potential on the saddles $z_\pm$ satisfies 
\begin{equation}
\label{lpitch04}
V(z_+) = V(z_-) = V(z) + \frac{\lambda_1^2}{16C_4} +
\Order{\lambda_1^{5/2}}\;.
\end{equation}

Such a bifurcation occurs, for instance, in Example~\ref{ex_pitch1} for
$\gamma=1/3$. Then both saddles $z_-$ and $z_+$ have to be crossed on any
minimal path between the global minima, as in~\figref{fig_gates}c. 

We can now state a sharp estimate of the capacity in this situation, which is
proved in Section~\ref{ssec_bifs}.

\begin{theorem}
\label{thm_lpitch1}
Assume the normal form at $z=0$ satisfies~\eqref{lpitch01}. Let $A$ and
$B$ belong to the pathconnected components $O_-$ and $O_+$ respectively. Assume
further that $G(A,B)=\set{z}$ (resp.\ that $\set{z_-}$ and $\set{z_+}$ both
form a gate between $A$ and $B$ if $\lambda_1>0$). Then for $\lambda_1<0$, 
\begin{equation}
\label{lpitch05}
\capacity_A(B) =
\sqrt{\frac{(2\pi)^{d-2}\brak{\abs{\lambda_1}+(2\eps C_4)^{1/2}}}
{\lambda_2\dots\lambda_d}}\,
\frac{\eps^{d/2}} 
{\Psi_+\bigpar{\abs{\lambda_1}/(2\eps C_4)^{1/2}}} 
\e^{-V(z)/\eps}
\bigbrak{1+R_+(\eps,\abs{\lambda_1})}\;,
\end{equation}
while for $\lambda_1>0$, 
\begin{equation}
\label{lpitch06}
\capacity_A(B) =
\sqrt{\frac{(2\pi)^{d-2}\brak{\abs{\mu_1}+(2\eps C_4)^{1/2}}}
{\mu_2\dots\mu_d}}\,
\frac{\eps^{d/2}} 
{\Psi_-\bigpar{\abs{\mu_1}/(2\eps C_4)^{1/2}}}
\e^{-V(z_\pm)/\eps}
\bigbrak{1+R_-(\eps,\abs{\mu_1})}\;. 
\end{equation}
The functions $\Psi_+$ and $\Psi_-$ and the remainders $R_\pm$ are the same as
in~\eqref{pitch07} and~\eqref{pitch10}.
\end{theorem}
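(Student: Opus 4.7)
The plan is to reduce the computation of $\capacity_A(B)$ to a one-dimensional integral along the bifurcating direction, as was done in Theorem~\ref{thm_cap01} for the limiting case $\lambda_1=0$, and then to interpolate between that regime and the non-degenerate quadratic case by an explicit evaluation in terms of modified Bessel functions.

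First I would use Proposition~\ref{prop_scodim1} combined with Remark~\ref{rem_scod1} to bring the potential locally near the origin into the normal form \eqref{lpitch01}. Since the polynomial change of variables $x=y+g(y)$ has Jacobian $1+\Order{\Norm{y}}$, its effect on the Dirichlet form \eqref{cappot3} is only of the order of the error in \eqref{pitch10}. Writing
\begin{equation*}
V(y) = -u_1(y_1) + \tfrac12\sum_{j=2}^d \lambda_j y_j^2 + \Order{\Norm{y}^5},
\qquad u_1(y_1) = -\tfrac12\lambda_1 y_1^2 + C_4 y_1^4,
\end{equation*}
the situation fits the framework \eqref{main00} with $q=1$, since only the bifurcating direction $y_1$ fails to be non-degenerately quadratic.

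Next I would apply (a variant of) the general capacity estimate \eqref{main01} which, with $q=1$, reduces the computation to
\begin{equation*}
\capacity_A(B) = \frac{\eps}{\displaystyle\int_{-\delta}^{\delta} \e^{-u_1(y_1)/\eps}\,\6y_1}
\prod_{j=2}^d \sqrt{\frac{2\pi\eps}{\lambda_j}}\,\e^{-V(0)/\eps}\bigbrak{1+R(\eps)}.
\end{equation*}
For $\lambda_1<0$, the integrand is a single-well profile centred at $y_1=0$ and $V(0)=V(z)$. For $\lambda_1>0$, it is a double-well with minima at $y_1=\pm\sqrt{\lambda_1/(4C_4)}+\Order{\lambda_1}$, matching the saddles $z_\pm$ at altitude $V(z_\pm)=V(0)-\lambda_1^2/(16C_4)+\Order{\lambda_1^{5/2}}$. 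Completing the square yields the uniform expression $u_1(y_1) = C_4\bigpar{y_1^2-\lambda_1/(4C_4)}^2 - \lambda_1^2/(16C_4)$, and the rescaling $y_1=(\eps/C_4)^{1/4}\eta$ turns the integral into $\int \e^{-\eta^4\pm\alpha\eta^2/\sqrt2}\,\6\eta$ with $\alpha=\abs{\lambda_1}/(2\eps C_4)^{1/2}$, the sign depending on that of $\lambda_1$. The two resulting parabolic-cylinder-type integrals evaluate in closed form to the Bessel expressions in \eqref{pitch07}, producing $\Psi_+$ for $\lambda_1<0$ and $\Psi_-$ for $\lambda_1>0$; the square-root prefactor $\sqrt{\abs{\lambda_1}+(2\eps C_4)^{1/2}}$ emerges naturally from rewriting the answer in a form that interpolates smoothly between the Gaussian regime $\abs{\lambda_1}\gg\sqrt\eps$ and the purely quartic regime $\abs{\lambda_1}\ll\sqrt\eps$.

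The main obstacle will be the case $\lambda_1>0$: the origin is no longer a saddle, and the tube of integration along the unstable direction must bridge both saddles $z_\pm$ while crossing the intermediate valley $O_0$, whereas \eqref{main01} was derived for a tube containing a single saddle. To adapt it, I would build the upper-bound test function from the explicit one-dimensional equilibrium potential solving $\eps h''(y_1)-u_1'(y_1)h'(y_1)=0$ on $(-\delta,\delta)$, namely $h(y_1) = \int_{-\delta}^{y_1}\e^{u_1(s)/\eps}\,\6s\bigm/\int_{-\delta}^{\delta}\e^{u_1(s)/\eps}\,\6s$; its Dirichlet form equals the ratio above. For the matching lower bound, I would exploit the $y_1\mapsto -y_1$ symmetry of the normal form to pin down $h_{A,B}$ near $\tfrac12$ on the hyperplane $y_1=0$ (so that no independent a~priori estimate of $h_{A,B}$ in the middle valley $O_0$ is needed), and then run the standard tube-localisation argument outlined in Section~\ref{ssec_cappot}. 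The remainder \eqref{pitch10} then follows by tracking the $\Order{\Norm{y}^5}$ correction together with standard Laplace estimates for the tail contributions $\abs{y_j}>\delta$.
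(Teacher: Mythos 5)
Your treatment of the case $\lambda_1<0$ and of small positive $\lambda_1$ is essentially the paper's own argument: one applies the two-sided capacity bounds of Propositions~\ref{prop_main1-1} and~\ref{prop_main1-2} with the quartic function now sitting in the \emph{unstable} direction (the paper phrases this as the proof of Theorem~\ref{thm_pitch1} with the r\^oles of $y_1$ and $y_2$ interchanged), and the one-dimensional integral $\int\e^{-u_1(y_1)/\eps}\,\6y_1$, evaluated via the Bessel-function identities, produces $\Psi_\pm$ in the denominator together with the factor $\sqrt{\abs{\lambda_1}+(2\eps C_4)^{1/2}}$. The genuine gap is in the regime $\lambda_1\gg(\eps\abs{\log\eps})^{1/2}$, which the theorem also covers. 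There your single long tube spanning both saddles and the intermediate valley $O_0$ cannot deliver the stated error term~\eqref{pitch10}: the box must have extent $\delta_1\gtrsim\sqrt{\lambda_1/C_4}$ in the $y_1$-direction, so the normal-form remainder contributes errors of order $\delta_1^{5}/\eps\sim\lambda_1^{5/2}/\eps$, which is not small once $\lambda_1\gtrsim\eps^{2/5}$ (and for $\lambda_1$ of order one the normal-form coordinates need not even cover both saddles $z_\pm$). Moreover, your lower-bound strategy of pinning $h_{A,B}\approx\tfrac12$ on the hyperplane $y_1=0$ by symmetry is not justified: the evenness in $y_1$ is an assumption on the local normal form near $z$, while the global potential and the sets $A\subset O_-$, $B\subset O_+$ are arbitrary, so no reflection symmetry forces the equilibrium potential to equal $\tfrac12$ in the middle valley; and a quantitative a~priori estimate of $h_{A,B}$ on $O_0$ is exactly the kind of information that is hard to obtain directly.

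The paper circumvents both problems by keeping the normal-form analysis local to each saddle. For the upper bound it introduces three regions $D_-$, $D_0$, $D_+$ meeting $O_-$, $O_0$, $O_+$, sets the test function equal to $1$, $\tfrac12$, $0$ there, and interpolates only inside two \emph{small} boxes $C^\pm_\eps$ around $z_\pm$ (where the normal form is valid at the usual $\sqrt{\eps\abs{\log\eps}}$ scale); the two jumps of size $\tfrac12$ give the series factor $\tfrac12$. For the lower bound it restricts the Dirichlet form to the two boxes joined by a tube running through $O_0$, and applies the one-dimensional variational bound along the \emph{entire} tube: the boundary values are needed only on the outer ends lying in $O_\pm$, where Lemma~\ref{lem_apriori} gives them as $1-\Order{\eps^{1/2}}$ and $\Order{\eps^{1/2}}$, while $\int\e^{V(t,y_\perp)/\eps}\,\6t$ along the tube is dominated by equal contributions from the two saddle boxes, so the factor $\tfrac12$ appears without any knowledge of $h_{A,B}$ inside $O_0$. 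If you replace your single-tube construction and the symmetry argument by this two-box scheme, your proof matches the paper's; as written, the large-$\lambda_1$ case is not established.
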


When $\lambda_1\gg\eps^{1/2}$, the function $\Psi_-$ in~\eqref{lpitch06} is
close to $2$, so that the total capacity equals half the capacity associated
with each saddle $z_-$ and $z_+$. As in electrostatics, when two capacitors are
set up in series, the inverse of their equivalent capacity is thus equal to the
sum of the inverses of the individual capacities. 

\begin{figure}
\centerline{\includegraphics*[clip=true,width=75mm]{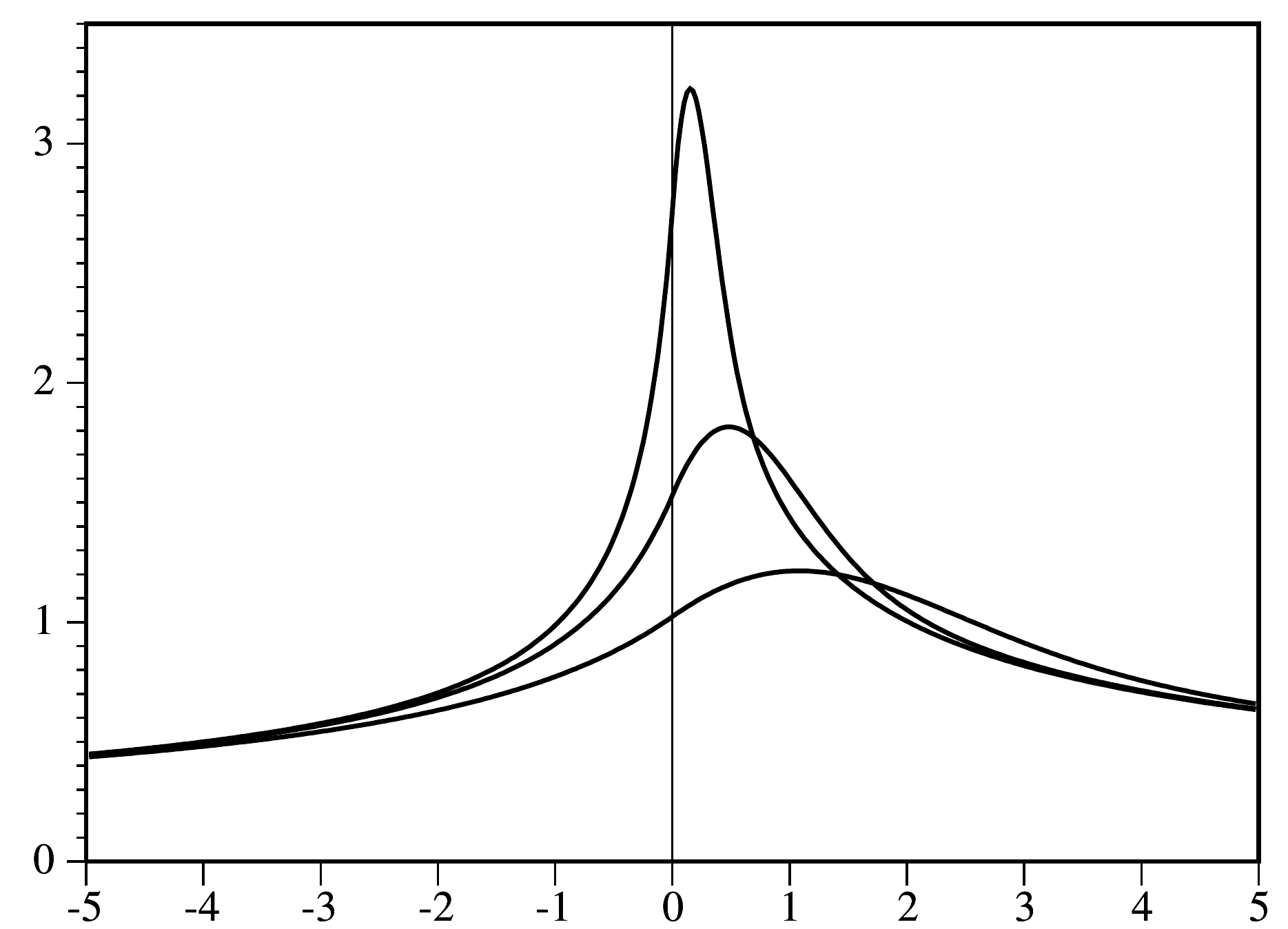}}
 \figtext{ 
	\writefig	10.5	1.0	$\lambda_1$
	\writefig	7.9	4.8	$\eps=0.01$
	\writefig	8.5	2.9	$\eps=0.1$
	\writefig	6.3	1.5	$\eps=0.5$
 }
\caption[]{The prefactor of the expected transition time near a
longitudinal pitchfork
bifurcation, as a function of the bifurcation parameter $\lambda_1$, shown for
three different values of $\eps$. (To be precise, we show the function
$\lambda_1\mapsto\Psi_+(\abs{\lambda_1}/\eps^{1/2})/\sqrt{\abs{\lambda_1}+\eps^{
1/2 } } $ for $\lambda_1<0$ and the function
$\lambda_1\mapsto\Psi_-(2\lambda_1/\eps^{1/2})/\sqrt{2\lambda_1+\eps^{1/2}}$
for $\lambda_1>0$.)}
\label{fig_Psi2long}
\end{figure}

\begin{cor}
\label{cor_lpitch1}
Assume the normal form at $z=0$ satisfies~\eqref{lpitch01}. 
\begin{itemiz}
\item	If $\lambda_1<0$, assume that the minimum of\/ $V$ in one of the
\pathconnected\ components of\/ $\OV{z}$ is reached at a unique point
$x$, which is quadratic. Let $B$ belong to a different \pathconnected\ component
of\/ $\OV{z}$, with $G(\set{x},B)=\set{z}$. Then the expected first-hitting time
of $B$ satisfies 
\begin{multline}
\label{lpitch11}
\bigexpecin{x}{\tau_B} = 
2\pi \sqrt{\frac{\lambda_2\dots\lambda_d}
{\brak{\abs{\lambda_1}+(2\eps C_4)^{1/2}}\det(\hessian{V}(x))}}
\Psi_+\biggpar{\frac{\abs{\lambda_1}}{(2\eps C_4)^{1/2}}}
\e^{[V(z)-V(x)]/\eps} \\
\times
\bigbrak{1+R_+(\eps,\abs{\lambda_1})}\;.
\end{multline}
 
\item	If $\lambda_1>0$, assume that the minimum of\/ $V$ in the
\pathconnected\ component $O_-$ of\/ $\OV{z_-}$ is reached at a unique point
$x$, which is quadratic. Let $B$ belong to the \pathconnected\ component $O_+$
of\/ $\OV{z_+}$, with $G(\set{x},B)=\set{z_+}$ or $\set{z_-}$. Assume
finally that $V(x)$ is strictly smaller than the minimum of $V$ in the common
component $O_0$ of $\OV{z_-}$ and $\OV{z_+}$. Then the
expected first-hitting time of $B$ satisfies 
\begin{multline}
\label{lpitch12}
\bigexpecin{x}{\tau_B} = 
2\pi \sqrt{\frac{\mu_2\dots\mu_d}
{\brak{\abs{\mu_1}+(2\eps C_4)^{1/2}}\det(\hessian{V}(x))}}
\Psi_-\biggpar{\frac{\abs{\mu_1}}{(2\eps C_4)^{1/2}}}
\e^{[V(z_\pm)-V(x)]/\eps} \\
\times
\bigbrak{1+R_-(\eps,\abs{\mu_1})}\;.
\end{multline}
\end{itemiz}
\end{cor}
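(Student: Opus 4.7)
The plan is to combine Theorem~\ref{thm_lpitch1} with the potential-theoretic identity~\eqref{cappot5}, which gives
\begin{equation*}
\bigexpecin{x}{\tau_B} = \frac{\displaystyle\int_{B^c}\e^{-V(y)/\eps}\,h_{\cB_\eps(x),B}(y)\,\6y}{\capacity_{\cB_\eps(x)}(B)}\;.
\end{equation*}
The denominator is read off directly from~\eqref{lpitch05} in case (i) and from~\eqref{lpitch06} in case (ii). The remaining work is to confirm that the numerator still obeys the standard Laplace asymptotic~\eqref{cappot6}, namely $(2\pi\eps)^{d/2}\e^{-V(x)/\eps}/\sqrt{\det(\hessian{V}(x))}\bigbrak{1+\Order{\eps^{1/2}\abs{\log\eps}}}$, after which the two formulas \eqref{lpitch11} and \eqref{lpitch12} follow by straightforward algebraic simplification.

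In case (i), $\lambda_1<0$ and $z$ is a quadratic saddle separating exactly two valleys. The situation is the standard one of~\cite{BEGK}: the equilibrium potential $h_{\cB_\eps(x),B}$ is exponentially close to $1$ throughout the component of $\OV{z}$ containing $x$ and exponentially small in the component containing $B$. Laplace's method near the unique quadratic minimum $x$ then gives~\eqref{cappot6}, and dividing by~\eqref{lpitch05} yields~\eqref{lpitch11}.

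Case (ii) is more delicate because the open valleys $\OV{z_-}$ and $\OV{z_+}$ share the intermediate component $O_0$, so that $h_{\cB_\eps(x),B}$ is no longer exponentially small on $O_0\cup O_+$: inside $O_0$ it may take values of order one (roughly $\tfrac12$ in the symmetric case, since $x$ and $B$ lie on opposite sides of $O_0$), while inside $O_+$ it is exponentially close to zero because $B\subset O_+$. The hypothesis $V(x)<\min_{O_0}V$ is introduced precisely to neutralise the $O_0$ contribution to the numerator: the integral over $O_0$ is bounded above by $\abs{O_0}\,\e^{-\min_{O_0}V/\eps}$, which is exponentially smaller than the dominant $\e^{-V(x)/\eps}$ arising from a neighbourhood of $x$, and a similar estimate controls the tail in $O_+$. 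Hence~\eqref{cappot6} survives in case (ii), and dividing by~\eqref{lpitch06} produces~\eqref{lpitch12}.

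The main obstacle is the a priori pointwise control of $h_{\cB_\eps(x),B}$ inside the intermediate valley $O_0$ that is needed to reduce the Laplace estimate of the numerator to a neighbourhood of $x$. Such bounds can be extracted from the maximum principle applied to~\eqref{cappot2}, combined with capacity comparison inequalities in the spirit of~\cite[Section~4]{BEGK}, which give quantitative upper bounds on $h$ in terms of ratios of capacities between small balls around $y$, $x$ and the set $B$. Once these are in hand, the depth condition $V(x)<\min_{O_0}V$ is exactly what is required to guarantee that the leading-order Laplace contribution comes from a neighbourhood of $x$ alone and that no additional correction enters the prefactor of the expected first-hitting time.
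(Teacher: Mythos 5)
Your proposal is correct and follows the same route as the paper, which obtains the corollary by inserting the capacity estimates of Theorem~\ref{thm_lpitch1} into~\eqref{cappot5} and using the Laplace asymptotics~\eqref{cappot6} for the numerator. Your additional discussion of case (ii) — that the equilibrium potential may be of order one in $O_0$, and that the hypothesis $V(x)<\min_{O_0}V$ is exactly what makes the $O_0$ (and $O_+$) contributions to the numerator exponentially negligible, via the a priori bounds in the spirit of Lemma~\ref{lem_apriori} — is precisely the implicit justification behind the paper's \lq\lq immediate\rq\rq\ deduction and the reason that hypothesis appears in the statement.
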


For $\lambda_1=0$, this result reduces to Corollary~\ref{cor_cap01}. 
For negative $\lambda_1$ of order $1$, one recovers the usual Eyring--Kramers
formula, while for positive $\lambda_1$ of order $1$, one obtains the 
Eyring--Kramers formula with an extra factor $2$, which is due to the fact that
two saddles have to be crossed in a row. Note that when applying this result to
Example~\ref{ex_pitch1} near $\gamma=1/3$, all expected transition
times have to be divided by $2$, because there are two equivalent paths from
one global minimum to the other one. 


\subsection{Bifurcations with double-zero eigenvalue}
\label{ssec_zerozero}

We consider finally the case where the $\gamma$-dependent potential $V$ admits,
for $\gamma=\gamma^\star$, $z=0$ as a nonquadratic saddle with two vanishing
eigenvalues. Then the normal form is given by~\eqref{cap00_2} with $V_3=0$. This
singularity having codimension $2$, there are several different ways to perturb
it, and actually two parameters are needed to describe all of them. We restrict
our attention to the particular perturbation (which is generic, e.g., in cases
with $D_N$-symmetry, $N\geqs3$)
\begin{equation}
\label{bif001}
V(y) = \frac12 \lambda_1(\gamma)y_1^2 + \frac12
\lambda_2(\gamma)(y_2^2+y_3^2) + 
V_4(y_2,y_3;\gamma) + \frac12 \sum_{j=4}^d
\lambda_j(\gamma) y_j^2 
+ \Order{\Norm{y}^5}\;,
\end{equation}
where $V_4(y_2,y_3;\gamma)$ is of the form~\eqref{scod2-4} with positive
discriminant, satisfying $V_{1111}>0$. Here $\lambda_2(\gamma^\star)=0$, while
$\lambda_1(\gamma^\star)<0$, $\lambda_4(\gamma^\star)>0$, and so on. Again, we
no longer indicate the $\gamma$-dependence of the eigenvalues in the sequel. All
quantities except $\lambda_2$ are assumed to be bounded away from zero. 

\begin{example}
\label{example_doublezero} 
Consider the potential~\eqref{bif01} for $N\geqs3$. In Fourier
variables 
\begin{equation}
 \label{bif001A}
z_k = \frac{1}{\sqrt{N}}\sum_{j=1}^N \e^{-k\icx 2\pi/N} x_j\;,
\qquad
-\intpart{N/2} < k \leqs \intpart{N/2}\;,
\end{equation} 
the potential takes the form 
\begin{equation}
 \label{bif001B}
\widehat V(z) = \frac12 \sum_k \eta_k z_kz_{-k} 
+ \frac1{4N} \sum_{k_1+k_2+k_3+k_4=0\pmod{N}} z_{k_1}z_{k_2}z_{k_3}z_{k_4}\;,
\end{equation} 
where $\eta_k = -1+2\gamma\sin^2(k\pi/N)$. 
For $\gamma>(2\sin^2(\pi/N))^{-1}$, all $\eta_k$ except $\eta_0=-1$ are
positive, and thus the origin is a quadratic saddle. It forms the gate between
the two global minima of the potential, given (in original variables) by
$I^\pm=\pm(1,\dots,1)$. As $\gamma$ approaches $(2\sin^2(\pi/N))^{-1}$ from
above, the two eigenvalues $\eta_{\pm1}$ go to zero, and the origin becomes a
singular saddle of codimension $2$. 
In~\cite{BFG06a} it is shown that as $\gamma$ decreases further, a certain
number (which depends on $N$) of quadratic saddles emerges from the origin, all
of the same potential height. The set of all these saddles then forms the gate
between $I^-$ and $I^+$. 

The normal form of~\eqref{bif001B} is equivalent to~\eqref{bif001} with $d=N$,
up to a linear transformation (one has to work with the real and imaginary
part of each $z_k$ instead of the pairs $(z_k,z_{-k})$), and to a relabelling of
the coordinates, setting $\lambda_1=\eta_0=-1$ and
$\lambda_{2k}=\lambda_{2k+1}=\eta_k$.
\end{example}

We start by considering saddles with normal form~\eqref{bif001} for
$\lambda_2\geqs0$, when the origin is a saddle, and for slightly negative
$\lambda_2$. Indeed, in these cases one can derive a general result which does
not depend on the details of the nonlinear terms. We define $k(\ph)$, as
in~\eqref{cap00_4}, by $V_4(r\cos\ph,r\sin\ph;\gamma) = r^4 k(\ph;\gamma)$.

\begin{theorem}
\label{thm_bif1}
Assume the normal form at $z=0$ satisfies~\eqref{bif001}. Let $A$ and
$B$ belong to different \pathconnected\ components of $\OV{z}$ (respectively of
the newly created saddles if $\lambda_2<0$). Assume further that the gate 
$G(A,B)$ between $A$ and $B$ is formed by the saddle $z$ in case
$\lambda_2\geqs0$, and by the newly created saddles, otherwise. 
Then for $\lambda_2\geqs0$, 
\begin{multline}
\label{bif002}
\capacity_A(B) = \frac1{2\pi} \int_0^{2\pi} 
\sqrt{\frac{(2\pi)^{d-2}\abs{\lambda_1}}
{\bigbrak{\lambda_2+(2\eps k(\ph))^{1/2}\,}^2\lambda_4\dots\lambda_d}}\,
\Theta_+\biggpar{\frac{\lambda_2}{(2\eps k(\ph))^{1/2}}}\,\6\ph \\ 
\times \eps^{d/2} \e^{-V(z)/\eps}
\bigbrak{1+R_+(\eps,\lambda_2)}\;,
\end{multline}
while for $-(\eps\abs{\log\eps})^{1/2}\leqs\lambda_2<0$
\begin{multline}
\label{bif003}
\capacity_A(B) = \frac1{2\pi} \int_0^{2\pi} 
\sqrt{\frac{(2\pi)^{d-2}\abs{\lambda_1}}
{\lambda_4\dots\lambda_d}}\,
\frac{\Theta_-(-\lambda_2/(2\eps k(\ph))^{1/2})}
{(2\eps k(\ph))^{1/2}}\e^{\lambda_2^2/16\eps k(\ph)}\,\6\ph \\ 
\times \eps^{d/2} \e^{-V(z)/\eps}
\bigbrak{1+R_-(\eps,\lambda_2)}\;,
\end{multline}
The functions $\Theta_+$ and $\Theta_-$ are bounded above and below uniformly on
$\R_+$. They are given by 
\begin{align}
\label{bif004a}
\Theta_+(\alpha) &= \sqrt{\frac{\pi}{2}} (1+\alpha) \e^{\alpha^2/8}
\Phi\biggpar{-\frac\alpha2}\;, \\
\label{bif004b}
\Theta_-(\alpha) &= \sqrt{\frac{\pi}{2}} \Phi\biggpar{\frac\alpha2}\;,
\end{align}
where $\Phi(x)=(2\pi)^{-1/2}\int_{-\infty}^x \e^{-y^2/2}\6y$ denotes the
distribution function of the standard normal law. In particular,
\begin{equation}
\label{bif005}
\lim_{\alpha\to+\infty} \Theta_+(\alpha) = 1\;,
\qquad
\lim_{\alpha\to+\infty} \Theta_-(\alpha) = \sqrt{\frac{\pi}{2}}\;,
\end{equation}
and 
\begin{equation}
\label{bif005B}
\lim_{\alpha\to0} \Theta_+(\alpha) = \lim_{\alpha\to0} \Theta_-(\alpha) 
= \sqrt{\frac{\pi}{8}} \simeq 0.6267\;.
\end{equation}
Finally, the error terms satisfy 
\begin{equation}
\label{bif006}
\bigabs{R_\pm(\eps,\lambda)} \leqs C 
\biggbrak{
\frac{\eps\abs{\log\eps}^{3}}
{\max\set{\abs{\lambda},(\eps\abs{\log\eps})^{1/2}}}}^{1/2}
\;.
\end{equation}
\end{theorem}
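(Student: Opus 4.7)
The plan is to apply the general capacity estimate~\eqref{main01} to the normal form~\eqref{bif001} with $q=3$. The nonquadratic stable directions are $(y_2,y_3)$, and I take $u_1(y_1)=\tfrac12\abs{\lambda_1}y_1^2$, $u_2(y_2,y_3)=\tfrac12\lambda_2(y_2^2+y_3^2)+V_4(y_2,y_3)$. The $y_1$-denominator of~\eqref{main01} is the Gaussian $\sqrt{2\pi\eps/\abs{\lambda_1}}$, the transverse quadratic directions contribute $\prod_{j=4}^d\sqrt{2\pi\eps/\lambda_j}$, so the whole work lies in evaluating
\begin{equation*}
 I_2(\eps)=\int_{\cB_{\delta_2}(0)}\e^{-u_2(y_2,y_3)/\eps}\,\6y_2\,\6y_3 \;.
\end{equation*}

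I would pass to polar coordinates $(y_2,y_3)=(r\cos\ph,r\sin\ph)$, using $V_4(r\cos\ph,r\sin\ph)=r^4 k(\ph)$ to factor the integrand. The substitution $u=r^2$ converts the radial integral into $\tfrac12\int_0^\infty\e^{-[k(\ph)u^2+\frac12\lambda_2 u]/\eps}\6u$; completing the square in $u$ and expressing the remaining one-sided Gaussian through $\Phi$ yields, up to an $\Order{\e^{-c/\eps}}$ boundary contribution when $\delta_2$ is of order $\sqrt{\eps\abs{\log\eps}}$,
\begin{equation*}
 I_2(\eps)\simeq\int_0^{2\pi}\frac{\sqrt{\pi\eps}}{2\sqrt{k(\ph)}}\exp\biggset{\frac{\lambda_2^2}{16\eps k(\ph)}}\Phi\biggpar{-\frac{\lambda_2}{2\sqrt{2\eps k(\ph)}}}\6\ph\;.
\end{equation*}
Setting $\alpha=\lambda_2/(2\eps k(\ph))^{1/2}$ turns the integrand into $\sqrt{\pi\eps/k(\ph)}/2\cdot\e^{\alpha^2/8}\Phi(-\alpha/2)$. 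For $\lambda_2\geqs0$ I would repackage this via the identity $\sqrt{\pi/2}(1+\alpha)\e^{\alpha^2/8}\Phi(-\alpha/2)=\Theta_+(\alpha)$, which puts the result in the exact form~\eqref{bif002} with the $(1+\alpha)$ absorbed into the $[\lambda_2+(2\eps k(\ph))^{1/2}]$ in the denominator. For $\lambda_2<0$ the factor $(1+\alpha)$ changes sign in this region, so one instead keeps $\e^{\lambda_2^2/16\eps k(\ph)}$ visible and rewrites $\Phi(\beta/2)=\sqrt{2/\pi}\,\Theta_-(\beta)$ with $\beta=-\alpha>0$, producing~\eqref{bif003}. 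The asymptotic values~\eqref{bif005}--\eqref{bif005B} are immediate from $\Phi(0)=1/2$ and the Mills-ratio $\Phi(-x)\sim\phi(x)/x$ as $x\to\infty$.

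The main difficulty is the regime $\lambda_2<0$, where $z=0$ is no longer itself a saddle: the true gate is a finite set of quadratic saddles born along the directions $\ph_*$ minimising $k(\ph)$, at radius $r^2\simeq-\lambda_2/(4k(\ph_*))$. The range $-(\eps\abs{\log\eps})^{1/2}\leqs\lambda_2<0$ is chosen precisely so that all these emerging saddles lie inside the $\sqrt{\eps\abs{\log\eps}}$-ball used in~\eqref{main01}; a single Laplace expansion around $z$ therefore automatically sweeps up every saddle, and the prefactor $\e^{\lambda_2^2/(16\eps k(\ph_*))}$ appearing in~\eqref{bif003} is exactly $\e^{[V(z)-V(z_\pm)]/\eps}$. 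Beyond this one has to verify that the \emph{a priori} bound on the equilibrium potential underlying~\eqref{main01} still holds in this degenerate regime, which is where the rotational structure of~\eqref{bif001}---absent in the general double-zero case discussed at the end of Section~\ref{ssec_cap2}---plays a crucial role in preventing $h_{A,B}$ from taking anomalous values in the newly forming valleys. The remainder bound~\eqref{bif006} then follows from the standard propagation of the $\Order{\Norm{y}^5}$ normal-form error and the $\delta_i$-truncation errors through the change of variables above, with $r$ concentrated on the scale $\sqrt{\eps\abs{\log\eps}}\vee\sqrt{\abs{\lambda_2}}$.
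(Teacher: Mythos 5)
Your proposal follows essentially the same route as the paper's proof: apply Propositions~\ref{prop_main1-1} and~\ref{prop_main1-2} with $q=3$, $u_1(y_1)=\frac12\abs{\lambda_1}y_1^2$ and $u_2(y_2,y_3)=\frac12\lambda_2(y_2^2+y_3^2)+V_4(y_2,y_3)$, evaluate the $(y_2,y_3)$-integral in polar coordinates by completing the square in $r^2$ (the paper uses the equivalent identity $\int_0^\infty\e^{-(y^2+d)^2/2}y\,\6y=\sqrt{\pi/2}\,\Phi(-d)$), and repackage the result through $\Theta_\pm$, with $\delta_2$ of order $\bigpar{\eps\abs{\log\eps}/\max\{\lambda_2,(\eps\abs{\log\eps})^{1/2}\}}^{1/2}$ producing the error bound~\eqref{bif006}. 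One small correction: the a priori control of $h_{A,B}$ in the regime $-(\eps\abs{\log\eps})^{1/2}\leqs\lambda_2<0$ does not hinge on the rotational structure of~\eqref{bif001}, but on the fact that $\lambda_1$ remains bounded away from zero, so that only two valleys meet near the gate and Lemma~\ref{lem_apriori} applies exactly as for the other theorems; the problematic situation described at the end of Section~\ref{ssec_cap2} is the one with $\lambda_1=\lambda_2=0$, where more than two valleys can meet at the saddle.
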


\begin{figure}
\centerline{\includegraphics*[clip=true,width=75mm]{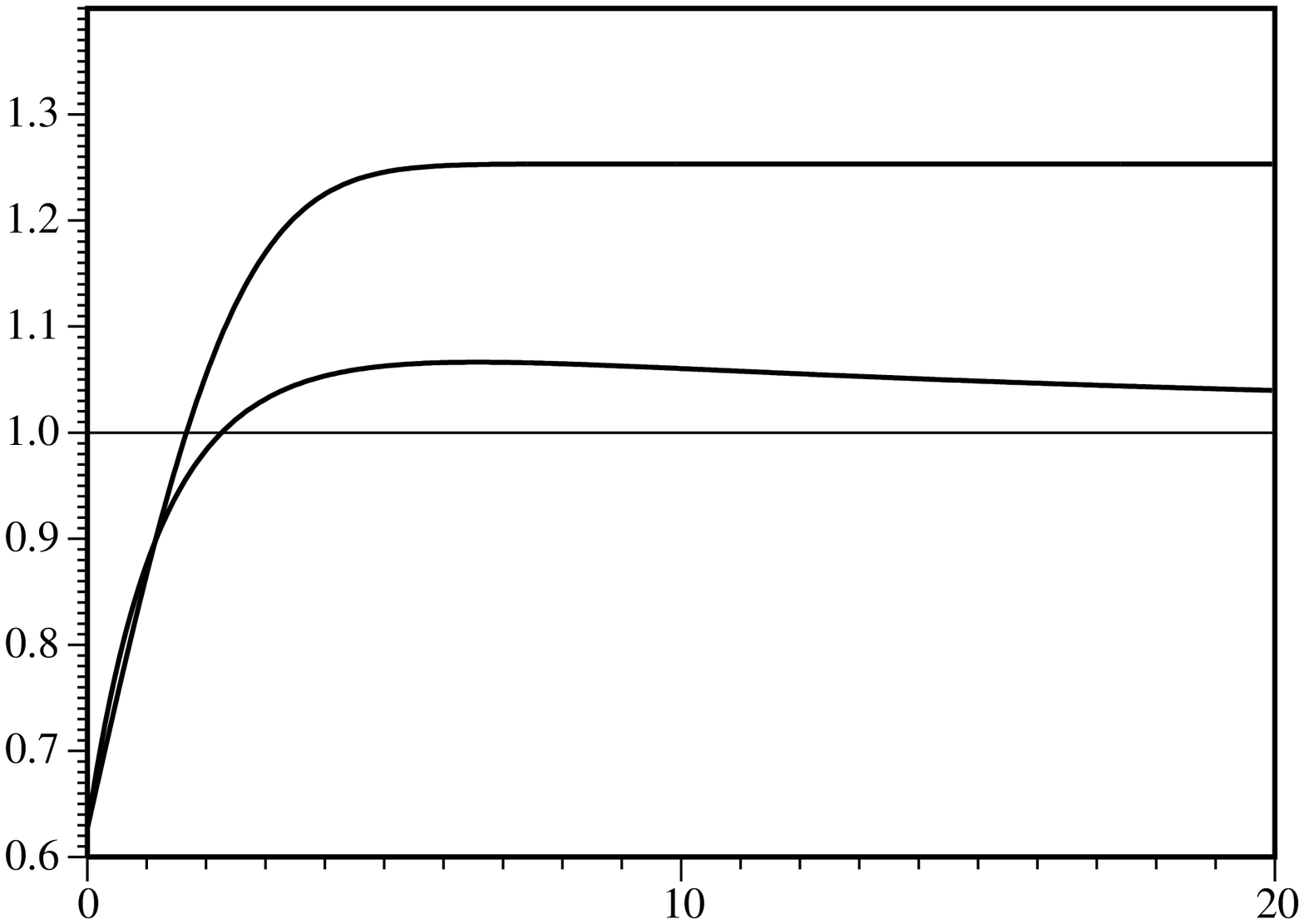}
\includegraphics*[clip=true,width=75mm]{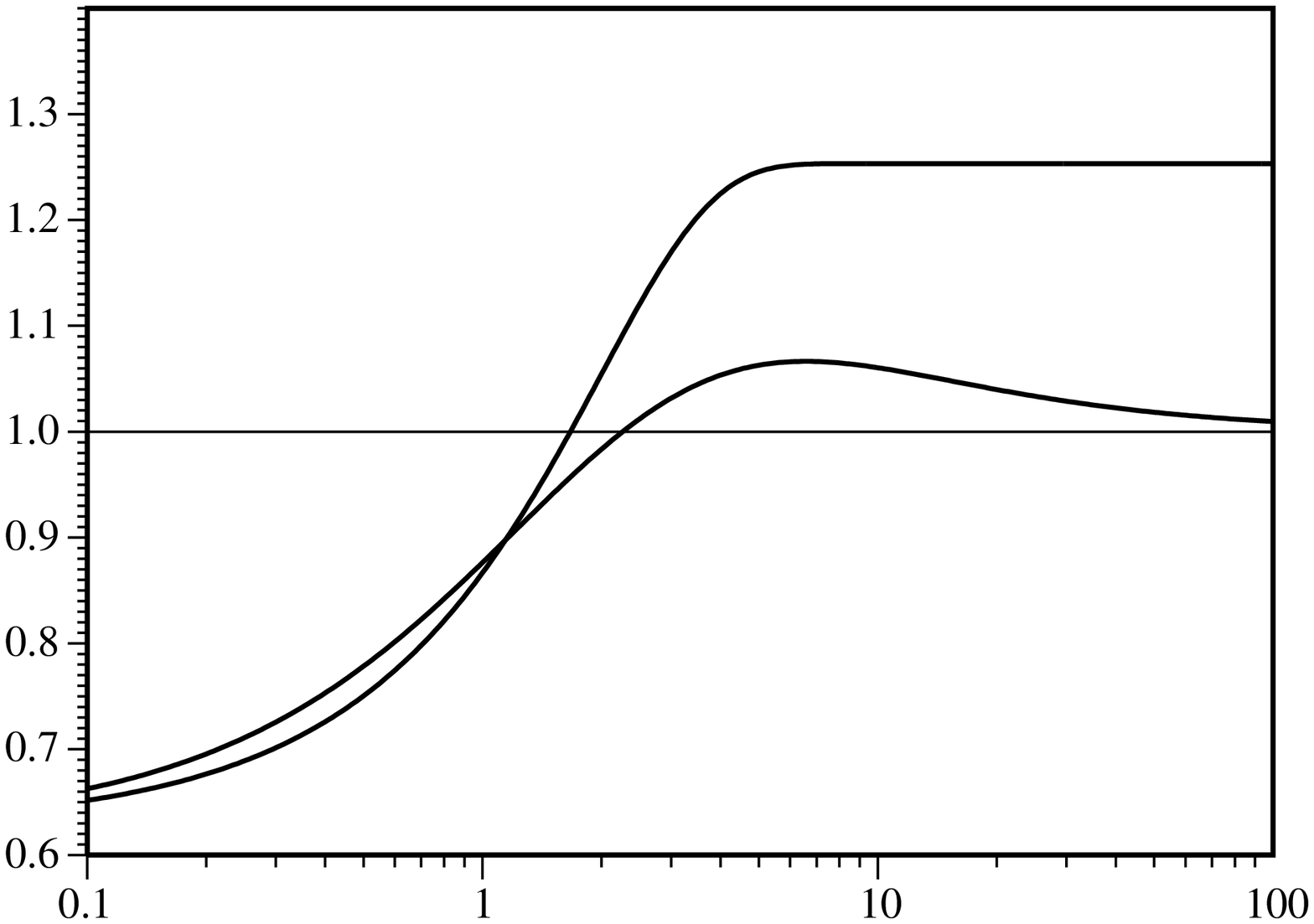}}
 \figtext{ 
	\writefig	3.5	3.9	$\Theta_+(\alpha)$
	\writefig	4.4	5.0	$\Theta_-(\alpha)$
	\writefig	12.0	3.9	$\Theta_+(\alpha)$
	\writefig	12.3	5.0	$\Theta_-(\alpha)$
 }
\caption[]{The functions $\Theta_\pm(\alpha)$, shown on a linear and on a
logarithmic scale.}
\label{fig_Theta}
\end{figure}

The proof is given in Section~\eqref{ssec_bifs}. The functions
$\Theta_\pm(\alpha)$, which are shown in~\figref{fig_Theta}, are again
universal in the sense that they will be the same for all bifurcations
admitting the normal form~\eqref{bif001}.

\goodbreak
\begin{cor}
\label{cor_bif00}
Assume the normal form at $z=0$ satisfies~\eqref{bif001}. 
\begin{itemiz}
\item	If $\lambda_2>0$, assume that the minimum of\/ $V$ in one of the
\pathconnected\ components of\/ $\OV{z}$ is reached at a unique point
$x$, which is quadratic. Let $B$ belong to a different \pathconnected\ component
of\/ $\OV{z}$, with $G(\set{x},B)=\set{z}$. Then the expected first-hitting time
of $B$ satisfies 
\begin{equation}
\label{bif007}
\bigexpecin{x}{\tau_B} = 
2\pi \sqrt{\frac{\lambda_4\dots\lambda_d}
{\abs{\lambda_1}\det(\hessian{V}(x))}}
\frac{\e^{[V(z)-V(x)]/\eps}}
{\displaystyle
\frac{1}{2\pi} \int_0^{2\pi}\frac{\Theta_+(\lambda_2/(2\eps k(\ph))^{1/2})}
{\lambda_2+(2\eps k(\ph))^{1/2}}\6\ph}
\bigbrak{1+R_+(\eps,\lambda_2)}\;.
\end{equation}
 
\item	The above situation extends to slightly negative $\lambda_2$,
that is, $-\sqrt{\eps\abs{\log\eps}}\leqs\lambda_2\leqs0$, where 
\begin{multline}
\label{bif008}
\bigexpecin{x}{\tau_B} = 
2\pi \sqrt{\frac{\lambda_4\dots\lambda_d}
{\abs{\lambda_1}\det(\hessian{V}(x))}}
\frac{\e^{[V(z)-V(x)]/\eps}}
{\displaystyle
\frac{1}{2\pi} \int_0^{2\pi}\frac{\Theta_-(\lambda_2/(2\eps k(\ph))^{1/2}} 
{(2\eps k(\ph))^{1/2}}\e^{\lambda_2^2/16\eps k(\ph)}\6\ph} \\
\times
\bigbrak{1+R_-(\eps,\lambda_2)}\;.
\end{multline}
\end{itemiz}
\end{cor}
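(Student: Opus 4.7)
The plan is to derive Corollary~\ref{cor_bif00} directly by combining Theorem~\ref{thm_bif1} with the general representation formula \eqref{cappot5} for expected first-hitting times. Since $x$ is assumed to be a unique quadratic minimum in its \pathconnected{} component of $\OV{z}$, formula \eqref{cappot5} applied with $A=\cB_\eps(x)$ yields
\begin{equation*}
\bigexpecin{x}{\tau_B} =
\frac{\displaystyle\int_{B^c} \e^{-V(y)/\eps} h_{\cB_\eps(x),B}(y)\,\6y}
{\capacity_{\cB_\eps(x)}(B)}\;,
\end{equation*}
and the numerator can be estimated by standard Laplace asymptotics around $x$, using \eqref{cappot6}, giving
\begin{equation*}
\int_{B^c} \e^{-V(y)/\eps} h_{\cB_\eps(x),B}(y)\,\6y =
\frac{(2\pi\eps)^{d/2}}{\sqrt{\det(\hessian{V}(x))}}\,\e^{-V(x)/\eps}\bigbrak{1+\Order{\eps^{1/2}\abs{\log\eps}}}\;.
\end{equation*}

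For the denominator I would substitute the capacity estimate \eqref{bif002} for $\lambda_2\geqs0$ and \eqref{bif003} for $-\sqrt{\eps\abs{\log\eps}}\leqs\lambda_2\leqs0$. Dividing the two expressions, the factor $(2\pi\eps)^{d/2}\e^{-V(x)/\eps}$ in the numerator and the $\eps^{d/2}\e^{-V(z)/\eps}$ and $(2\pi)^{(d-2)/2}$ factors in the denominator combine to produce exactly the prefactor $2\pi\sqrt{\lambda_4\dots\lambda_d/(\abs{\lambda_1}\det(\hessian{V}(x)))}$ and exponential $\e^{[V(z)-V(x)]/\eps}$ appearing in \eqref{bif007}--\eqref{bif008}. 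The $\ph$-integral, together with the factors involving $\lambda_2$, $k(\ph)$ and the universal functions $\Theta_\pm$, ends up in the denominator as written, because the capacity formulas in Theorem~\ref{thm_bif1} have the integral in the numerator and we invert it. The error terms $R_\pm(\eps,\lambda_2)$ absorb both the $\Order{\eps^{1/2}\abs{\log\eps}}$ contribution from the Laplace expansion at $x$ and the error terms \eqref{bif006} from Theorem~\ref{thm_bif1}; since the latter dominate, the exponents in the error bound are unchanged.

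The main obstacle is justifying that the representation \eqref{cappot5} together with the \emph{a priori} estimate \eqref{cappot6} still applies when $\lambda_2$ approaches or slightly crosses zero, because the strategy of \cite{BEGK} relies on the equilibrium potential $h_{\cB_\eps(x),B}$ being close to $1$ throughout the \pathconnected{} component of $\OV{z}$ containing $x$, and negligibly small on the other side. For $\lambda_2$ bounded away from zero this is standard. For $\lambda_2$ near zero, the flattening of the landscape is localized to a neighbourhood of the saddle $z$ of size $\Order{\eps^{1/4}}$, while $x$ and its quadratic well lie at distance of order one by assumption (all other eigenvalues being bounded away from zero); the standard harmonic-measure arguments therefore still give $h_{\cB_\eps(x),B}=1+\Order{\e^{-c/\eps}}$ on a macroscopic neighbourhood of $x$, and the Laplace-method estimate at $x$ goes through unchanged. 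In the regime $-\sqrt{\eps\abs{\log\eps}}\leqs\lambda_2<0$, one additionally has to check that the newly bifurcated saddles (which lie at distance of order $\sqrt{\abs{\lambda_2}/C_4}$ from the origin, hence at most $\Order{\eps^{1/4}\abs{\log\eps}^{1/4}}$) still form the gate assumed in the hypothesis $G(\set{x},B)$, and that no additional relevant component of $\OV{z_\pm}$ is created beyond those covered by Theorem~\ref{thm_bif1}; the assumption that $\lambda_2$ is the only small eigenvalue ensures this.
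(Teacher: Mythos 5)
Your proposal is correct and is exactly the route the paper intends: the corollary follows immediately by inserting the capacity estimates \eqref{bif002}--\eqref{bif003} of Theorem~\ref{thm_bif1} into the potential-theoretic formula \eqref{cappot5}, estimating the numerator via \eqref{cappot6}, and noting that the error \eqref{bif006} dominates the $\Order{\eps^{1/2}\abs{\log\eps}}$ term from the Laplace expansion at $x$. Your constant bookkeeping ($(2\pi\eps)^{d/2}/[(2\pi)^{(d-2)/2}\eps^{d/2}]=2\pi$, inversion of the $\ph$-integral) and the remarks on the validity of the a priori bound on $h_{\cB_\eps(x),B}$ for small $\abs{\lambda_2}$ match the paper's treatment.
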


For positive $\lambda_2$ of order $1$, one recovers the usual Eyring--Kramers
law (recall that $\lambda_2=\lambda_3$), while for $\lambda_2=0$, one recovers
Corollary~\ref{cor_cap001}. For negative $\lambda_2$, if the function $k(\ph)$
is nonconstant, the integral over $\ph$ in~\eqref{bif008} can be evaluated by
the Laplace method. Extrapolating the result to $\lambda_2$ of order $-1$, 
the integral would yield an extra $\eps^{1/2}$ which cancels with the
$\eps^{1/2}$ in the integral's denominator.

\begin{example}
\label{ex_bif00-1}
Consider the potential~\eqref{bif001B} for $N=3$ or $N\geqs5$. The resonant
terms near the bifurcation occurring for $\lambda_1=0$ are the monomials
proportional to $z_1z_{-1}$ and to $z_1^2z_{-1}^2$. In order to obtain the
standard normal form~\eqref{bif001}, we set 
$z_{\pm 1}=(y_2 \pm \icx y_3)/\sqrt{2}$ (the factor $\sqrt{2}$ guarantees that
the change
of variables is isometric). This yields 
\begin{equation}
 \label{bif0010}
V_4(y_2,y_3) = \frac{3}{8N} (y_2^2+y_3^2)^2\;, 
\end{equation}
and thus a constant $k(\ph)=3/8N$. The integrals in~\eqref{bif007}
and~\eqref{bif008} can thus easily be computed. For instance, for odd $N$ and
$\lambda_2 = -1+2\gamma\sin^2(\pi/N) > 0$, the prefactor of the transition
time from $I^-$ to a neighbourhood of $I^+$ is given by 
\begin{equation}
 \label{bif0010A}
2\pi
\frac{\lambda_4\lambda_6\dots\lambda_{N-1}}{\sqrt{\det\hessian V(I^-)}}
\frac{\lambda_2+(3\eps/4N)^{1/2}}{\Theta_+(\lambda_2/(3\eps/4N)^{1/2})}
\bigbrak{1+R_+(\eps,\lambda_2)}\;,
\end{equation} 
which admits a limit as $\lambda_2\to 0_+$. Note that the eigenvalues of
$\hessian V(I^-)$ are of the form
$\nu_1=2$, $\nu_{2k}=\nu_{2k+1}=2+2\gamma\sin^2(k\pi/N)$, so that all
quantities in~\eqref{bif0010A} are known.
\end{example}

\begin{example}
\label{ex_bif00-2}
Consider now the potential~\eqref{bif001B} for $N=4$. Then there are two
additional resonant terms, namely the monomials proportional to $z_1^4$ and
$z_{-1}^4$. Proceding as in the previous example, this yields 
\begin{equation}
 \label{bif0011}
V_4(y_2,y_3) = \frac{1}{8} (y_2^4+y_3^4)\;, 
\end{equation}
and thus $k(\ph)=(3+\cos(4\ph))/32$. It is, however, more convenient to keep
rectangular coordinates instead of using polar coordinates. Then the
coordinates $y_2$ and $y_3$ separate, each one undergoing independently a
pitchfork bifurcation. 
\end{example}

The behaviour for negative $\lambda_2 <-\sqrt{\eps\abs{\log\eps}}$ is determined
by the shape of the potential in the
$(y_2,y_3)$-plane. Near the bifurcation point, the potential is sombrero-shaped.
As $\lambda_2$ decreases, however, the sombrero may develop \lq\lq dips in the
rim\rq\rq, and these dips will determine the value of the integral over $\ph$.
We conclude the discussion by an in-depth study of this phenomenon in the case
of our model potential. 

\begin{example}
\label{ex_bif00-3}
Consider again the potential~\eqref{bif001B} for $N=3$ or $N\geqs5$. As seen in
Example~\ref{ex_bif00-1} above, the quartic term then is rotation-invariant.
This, however, is not sufficient to determine the behaviour for
$\lambda_2<-\sqrt{\eps\abs{\log\eps}}$. 

In fact, we know from symmetry arguments~\cite{BFG06a} that the
normal form of the potential around the origin has the form 
\begin{equation}
 \label{N5_1}
V(y) = -\frac12\abs{\lambda_1}y_1^2 + \frac12\lambda_2 r^2 
+ \sum_{q=2}^M C_{2q}r^{2q} + D_{2M}r^{2M}\cos(2M\ph) 
+ \frac12\sum_{j=4}^N\lambda_j y_j^2 
+ \Order{\Norm{y}^{2M+2}}\;,
\end{equation} 
with $M=N$ if $N$ is odd, and $M=N/2$ if $N$ is even. Here $(r,\ph)$ denote
polar coordinates for $(y_2,y_3)$. To be precise,
the approach presented in~\cite{BFG06a} is based on a centre-manifold analysis,
but the expression of the potential on the centre manifold is equal to the
resonant part of the normal form (this is because after the normal-form
transformation, the centre manifold is $\Norm{y}^{2M+2}$-close to the
$(y_2,y_3)$-plane).

For $\lambda_2<0$, $V(y)$ has nontrivial stationary points of the form
$z^{\star} = (0,r^\star,\ph^\star,0,\dots,0)+\Order{\lambda_{2}^{2}}$, where 
\begin{equation}
 \label{N5_2}
r^\star = \sqrt{\frac{-\lambda_2}{4C_4}} + \Order{\abs{\lambda_2}^{3/2}}\;,
\qquad
\sin(2M\ph^\star)=0\;.
\end{equation} 
The eigenvalues of the Hessian around these points are of the form 
\begin{align}
\nonumber
\mu_1 &= \lambda_1 + \Order{\abs{\lambda_2}}\;,\\
\nonumber
\mu_2 &= \frac1{r^2} \dpar{^2V}{\ph^2}(r^\star,\ph^\star) 
= \pm (2M)^2 D_{2M} \biggpar{\frac{-\lambda_2}{4C_4}}^{M-1} 
+ \Order{\abs{\lambda_2}^M}\;,\\
\nonumber
\mu_3 &= \dpar{^2V}{r^2}(r^\star,\ph^\star) 
= -2\lambda_2 + \Order{\abs{\lambda_2}^2}\;,\\
\mu_k &= \lambda_k + \Order{\abs{\lambda_2}}\;,
\qquad k= 4,\dots,N\;.
\label{N5_3} 
\end{align}
The points $z^\star$ with $\mu_2>0$ are saddles. They correspond to the \lq\lq
dips on the rim of the sombrero\rq\rq. In the sequel we will therefore assume
$\mu_2>0$.

Defining the sets $A$ and $B$ as usual, an analogous computation to the
previous ones (see Section~\ref{ssec_bifs} for details) shows that 
\begin{multline}
\label{N5_9}
\capacity_{A}(B) = 2M 
\sqrt{\frac{(2\pi)^{d-2}\abs{\mu_1}}
{(\mu_2\mu_3+(2M)^28\eps C_4)\mu_4\dots\mu_N}}
\,
\Theta_-\biggpar{\frac{\mu_3}{(8\eps C_4)^{1/2}}} 
\chi\biggpar{\frac{\mu_2\mu_3}{(2M)^2 8\eps C_4}} \\
\times 
\eps^{d/2} \e^{-V(z^\star)/\eps}
\bigbrak{1+R_-(\eps,\mu_2)}\;,
\end{multline}
with $R_{-}(\eps,\mu_2)$ as in \eqref{bif006}. Here  
$\Theta_-$ is the function defined in~\eqref{bif004b}, and 
\begin{equation}
\label{N5_10}
\chi(\alpha) = 2 \sqrt{1+\alpha}\e^{-\alpha} I_0(\alpha)\;,
\end{equation} 
where $I_0$ is the modified Bessel function of the first kind. It satisfies 
\begin{equation}
 \label{N5_10B}
\lim_{\alpha\to0} \chi(\alpha) = 2\;, \qquad
\lim_{\alpha\to\infty} \chi(\alpha) = \sqrt{\frac{2}{\pi}}\;.
\end{equation} 

\begin{figure}
\centerline{\includegraphics*[clip=true,width=75mm]{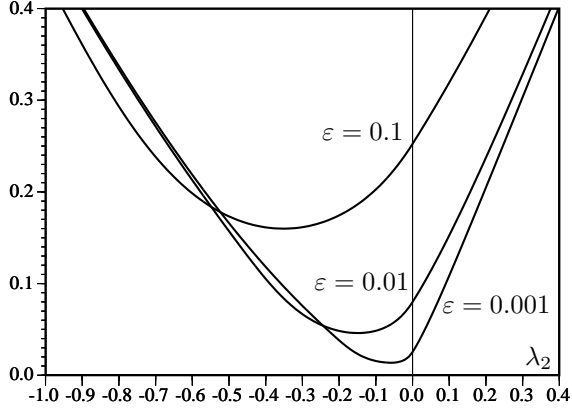}}
 \figtext{ 
	\writefig	10.5	1.0	$\lambda_2$
	\writefig	7.8	4.0	$\eps=0.1$
	\writefig	7.7	2.0	$\eps=0.01$
	\writefig	9.4	1.7	$\eps=0.001$
 }
\caption[]{$\lambda_2$-dependence of the prefactor of the expected transition
time for the model system $\eqref{bif01}$ for $N=3$.}
\label{fig_Theta2}
\end{figure}

It follows that if $B$ is a small neighbourhood of $I^+$, then for
$\lambda_2<0$, 
\begin{multline}
\label{N5_11}
\bigexpecin{I^-}{\tau_B} = 
\frac{2\pi}{2M} \sqrt{\frac{(\mu_2\mu_3+(2M)^28\eps C_4)\mu_4\dots\mu_N}
{\abs{\mu_1}\det(\hessian{V}(I^-))}}
\frac{\e^{[V(z^\star)-V(x)]/\eps}}
{\displaystyle
\Theta_-\biggpar{\frac{\mu_3}{(8\eps C_4)^{1/2}}} 
\chi\biggpar{\frac{\mu_2\mu_3}{(2M)^2 8\eps C_4}}} \\
\times
\bigbrak{1+R_-(\eps,\mu_2)}\;.
\end{multline}
The normal-form coefficient $C_4$ is in fact equal to $3/8N$
(compare~\eqref{bif0010}), which allows to check the continuity of the
prefactor at $\lambda_2=0$. 
One can identify three different regimes, depending on the value of $\mu_3>0$
(or $\lambda_2<0$): 
\begin{itemiz}
\item	For $0\leqs \mu_3 \ll \sqrt{\eps}$, the arguments of the functions
$\Theta_-$ and $\chi$ are negligible, so that the denominator can be
approximated by $\Theta_-(0)\chi(0)=\sqrt{\pi/2}$. Furthermore, the term
$(2M)^28\eps C_4$ under the square root dominates $\mu_2\mu_3$. This results in
a prefactor of order $\eps^{1/2}$, compatible with~\eqref{bif008}
and~\eqref{bif0010A}. In this regime, the potential in the $(y_2,y_3)$-plane is
almost flat. 

\item	For $\sqrt{\eps} \ll \mu_3 \ll \eps^{1/M}$, the argument of $\Theta_-$
is very large, so that $\Theta_-$ can be approximated by $\sqrt{\pi/2}$, while
the argument of $\chi$ is still negligible. The product of $\Theta_-$ and
$\chi$ is thus close to $\sqrt{2\pi}$. In this regime, the potential in the
$(y_2,y_3)$-plane is sombrero-shaped, and very close to rotation-invariant. 

\item	Finally, for $\mu_3\gg \eps^{1/M}$, the arguments of both $\Theta_-$ and
$\chi$ are very large, so that their product is close to $1$. Furthermore,
$\mu_2\mu_3$ under the square root dominates $(2M)^28\eps C_4$. In this regime,
the potential in the $(y_2,y_3)$-plane is sombrero-shaped, but with $2M$ dips of
noticeable depth. One recovers the standard Eyring--Kramers formula, with an
extra prefactor of $1/2M$ accounting for the fact that the gate consists of $2M$
saddles at equal height.  
\end{itemiz}

The $\lambda_2$-dependence of the prefactor is shown in~\figref{fig_Theta2} in
the case $N=3$. In general, the prefactor behaves like $\lambda_2$ for
$\lambda_2\gg\sqrt{\eps}$ and like $\abs{\lambda_2}^{M/2}$ for
$\lambda_2\ll-\eps^{1/M}$. 
Note that as $N$ increases, the second crossover at $\eps^{1/M}$ occurs later
and later, as the system becomes closer and closer to being rotation-invariant. 
\end{example}



\newpage
\section{Proofs}
\label{sec_proofs}


\subsection{Upper bound on the capacity}
\label{ssec_upperbnd}


We assume that the potential $V$ is of class $\cC^{r+1}$ for some $r\geqs2$, 
and that the origin $0$ is a stationary point with eigenvalues satisfying 
$\lambda_1\leqs0<\lambda_{q+1}\leqs\dots\leqs\lambda_d$ for some $q\geqs2$.
The eigenvalues $\lambda_2, \dots, \lambda_q$ lie between $\lambda_1$ and
$\lambda_{q+1}$, but may have an arbitrary sign. We assume that
$\lambda_{q+1}\leqs\dots\leqs\lambda_d$ are of order~$1$.
In the vicinity of the origin,
$V$ admits a normal form 
\begin{equation}
\label{main0}
V(y) = -u_1(y_1) + u_2(y_2,\dots,y_q) + \frac12 \sum_{j=q+1}^d \lambda_j y_j^2 
+ \Order{\Norm{y}^{r+1}}\;,
\end{equation}
where we may assume that $u_1$ and $u_2$ are polynomials of degree less or
equal $r$, and increasing at infinity. Suppose that 
\begin{itemiz}
\item
either the origin $z^{\star}=0$ itself is a saddle, 
\item
or that there exists a finite number of saddles $z^{\star}_{i}$ in the vicinity
of $z=0$. 
\end{itemiz}
Let $A$ and $B$ belong to two different \pathconnected\ components of
$\OV{z^{\star}}$ or $\OV{z^{\star}_{i}}$, respectively; and assume for
simplicity that the gate $G(A,B)$ consists of the above-mentioned saddles only.
The assumptions below will guarantee that the saddles $z^{\star}_{i}$ are close
to $z=0$. However, the results can easily be extended to more complicated
situations with several gates which are not necessarily close to the origin,
simply by summing the contributions of all gates.

\begin{prop}[Upper bound]
\label{prop_main1-1}
Assume there exist strictly positive numbers $\delta_1=\delta_1(\eps)$, 
$\delta_2=\delta_2(\eps)$ and $c\geqs0$ (independent of $\eps$) such that 
\begin{align}
\nonumber
u_1(y_1) &\leqs d\eps\abs{\log\eps} 
&& \text{whenever $\abs{y_1}\leqs\delta_1$\;,} \\
\nonumber
u_2(y_2,\dots,y_q) &\geqs -cd\eps\abs{\log\eps} 
&& \text{whenever $\Norm{(y_2,\dots,y_q)}\leqs\delta_2$\;,} \\
u_2(y_2,\dots,y_q) &\geqs 2d\eps\abs{\log\eps} 
&& \text{whenever $\Norm{(y_2,\dots,y_q)}\geqs\delta_2$\;,} 
\label{main1}
\end{align}
and such that 
\begin{equation}
\label{main1A}
\bigbrak{\delta_1(\eps)+\delta_2(\eps)}^{r+1} = \order{\eps\abs{\log\eps}}\;.
\end{equation}
Then 
\begin{equation}
\label{main1B}
\capacity_A(B) \leqs   
\eps \, \frac
{\displaystyle\int_{\cB_{\delta_2}(0)} \e^{-u_2(y_2,\dots,y_q)/\eps}
\,\6y_2\dots\6y_q}
{\displaystyle\int_{-\delta_1}^{\delta_1} \e^{-u_1(y_1)/\eps} \,\6y_1}
\prod_{j=q+1}^d 
\sqrt{\frac{2\pi\eps}{\lambda_j}} 
\bigbrak{1+R_1(\eps)}
+ R_2(\eps)\;,
\end{equation}
where
$\cB_{\delta_2}(0)=\setsuch{(y_2,\dots,y_q)}{y_2^2+\dots+y_q^2<\delta_2^2}$
and the error terms satisfy 
\begin{align}
\nonumber
R_1(\eps) & \leqs {}
	\begin{cases}
	C\bigbrak{\eps^{1/2}\abs{\log\eps}^{3/2} + \eps^{-1}(\delta_1^{r+1} + 
	\delta_2^{r+1}) + \delta_1 + \delta_2} 
	& \text{for $r=2$,} 
	\\
	C\bigbrak{\eps^{1/2}\abs{\log\eps}^{1/2} + \eps^{-1}(\delta_1^{r+1} + 
	\delta_2^{r+1}) + \delta_1 + \delta_2}
	& \text{for $r>2$,}
	\end{cases}
 \\
R_2(\eps) & {} \leqs C\eps^{3d/4+1}\delta_{1}^{-2} \leqs
C\eps^{d/2+3/2}\delta_{1}^{-2}
\label{main1C}
\end{align}
for some constant $C>0$.
\end{prop}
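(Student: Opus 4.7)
The plan is to apply the variational characterization \eqref{cappot4} of the capacity as the infimum of the Dirichlet form over all admissible $h\in\cH_{A,B}$, and to exhibit a concrete trial function $h^{\star}$ whose Dirichlet form realizes the claimed right-hand side up to the stated errors. All the work happens in normal-form coordinates \eqref{main0} in a box around the saddle.

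\textbf{Construction of the trial function.} I would localize in a box
\[
\cN=\bigsetsuch{y}{|y_1|<\delta_1,\ \Norm{(y_2,\dots,y_q)}<\delta_2,\ |y_j|<\delta_j^{\star}\ \text{for}\ j>q},
\]
with $\delta_j^{\star}=C\sqrt{\eps|\log\eps|/\lambda_j}$ chosen so that the Gaussian tails in the quadratic directions are superpolynomially small. Inside $\cN$, set $h^{\star}(y)=f(y_1)$, where
\[
f(y_1)=\frac{\int_{y_1}^{\delta_1}\e^{-u_1(s)/\eps}\,\dd s}{\int_{-\delta_1}^{\delta_1}\e^{-u_1(s)/\eps}\,\dd s}
\]
is the optimal one-dimensional profile satisfying $f(-\delta_1)=1$, $f(\delta_1)=0$. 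Outside $\cN$, extend $h^{\star}$ by $1$ on the pathconnected component of $\OV{z^{\star}}$ containing $A$ and by $0$ on the one containing $B$; on the $y_1=\pm\delta_1$ faces the extension matches $f$, and on the remaining faces a thin smoothing layer will absorb into the additive remainder $R_2$.

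\textbf{Factorization and evaluation.} Since $\nabla h^{\star}$ vanishes outside $\cN$ and equals $f'(y_1)\mskip1mu e_1$ inside,
\[
\Phi(h^{\star})=\eps\int_{\cN}\e^{-V(y)/\eps}f'(y_1)^2\,\dd y.
\]
The hypothesis \eqref{main1A} gives $\Norm{y}^{r+1}=\order{\eps|\log\eps|}$ throughout $\cN$, so substituting the normal form produces
\[
\e^{-V(y)/\eps}\leqs\e^{u_1(y_1)/\eps}\e^{-u_2(y_2,\dots,y_q)/\eps}\e^{-\frac{1}{2\eps}\sum_{j>q}\lambda_j y_j^2}\bigbrak{1+\Order{(\delta_1+\delta_2)^{r+1}/\eps}},
\]
and the integral separates into a product of three factors. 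A direct computation gives $\int_{-\delta_1}^{\delta_1}f'(y_1)^2\e^{u_1(y_1)/\eps}\dd y_1=\bigpar{\int_{-\delta_1}^{\delta_1}\e^{-u_1/\eps}\dd y_1}^{-1}$; the $(y_2,\dots,y_q)$-integral is precisely the numerator appearing in \eqref{main1B}; and each Gaussian factor in the remaining directions equals $\sqrt{2\pi\eps/\lambda_j}$ up to exponentially small tail corrections. Expanding the multiplicative remainder assembles the $R_1$ bound, whose $\eps^{-1}(\delta_1^{r+1}+\delta_2^{r+1})$ piece comes from the normal-form remainder, the $\delta_1+\delta_2$ piece from Taylor-comparing $u_1,u_2$ with their truncations in the finite box, and the $\eps^{1/2}|\log\eps|^{3/2}$ (respectively $\eps^{1/2}|\log\eps|^{1/2}$ for $r>2$) piece from the residual control of the integral tails in the quadratic variables.

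\textbf{Error analysis and main obstacle.} The additive remainder $R_2$ accounts for the boundary correction required to make $h^{\star}$ a bona fide element of $\cH_{A,B}$: smoothing $h^{\star}$ over a thin transition layer just outside $\cN$ in the $(y_2,\dots,y_q)$-directions contributes extra Dirichlet energy, but the hypothesis $u_2\geqs 2d\eps|\log\eps|$ on $\partial\cN$ makes $\e^{-V/\eps}\leqs\eps^{2d}\e^{-V(z^{\star})/\eps}$ there, and combining with $\Norm{\nabla h^{\star}}^2\leqs C\delta_1^{-2}$ from the interpolation yields the stated $\eps^{3d/4+1}\delta_1^{-2}$ bound. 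The main obstacle, in my view, is not any single computation but the global extension step: verifying that the labels $0$ and $1$ can be assigned coherently along the entire boundary $\partial\cN$ without creating inconsistencies between pathconnected components of $\OV{z^{\star}}$. The gate assumption $G(A,B)=\set{z^{\star}}$ together with Propositions~\ref{prop_stop1}--\ref{prop_stop2} provides exactly the topological input needed, and handling the case of several nearby saddles $z^{\star}_i$ reduces to summing the corresponding box contributions.
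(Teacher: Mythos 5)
Your construction is essentially the paper's own proof: the same variational upper bound \eqref{cappot4} with a trial function given by the optimal one-dimensional profile in $y_1$ inside a box around the saddle, extended by constants on either side, with an interpolation layer of gradient $\Order{\delta_1^{-1}}$ absorbed into the additive remainder $R_2$, and the same factorisation of the Dirichlet integral via the normal form \eqref{main0} into the $u_1$-, $u_2$- and Gaussian factors. The only quantitative slip is the intermediate claim $\e^{-V/\eps}\leqs\eps^{2d}\e^{-V(z^{\star})/\eps}$ on the layer: since $-u_1\geqs -d\eps\abs{\log\eps}$ and, on the transverse faces $\abs{y_j}=\delta_j^{\star}$ ($j>q$), only $u_2\geqs -cd\eps\abs{\log\eps}$ is available, the hypotheses \eqref{main1} (with $\delta_j^{\star}$ chosen so that $\tfrac12\lambda_j(\delta_j^{\star})^2\geqs 2(1+c)d\eps\abs{\log\eps}$) yield only $V\geqs\tfrac34 d\,\eps\abs{\log\eps}$ there, which is, however, exactly what the stated bound $R_2\leqs C\eps^{3d/4+1}\delta_1^{-2}$ requires, so nothing breaks.
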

\begin{proof}
The proof is adapted from the proof of~\cite[Theorem~5.1]{BEGK}. 

Recall that the capacity can be computed as the minimal value of the Dirichlet
form
$\Phi_{(A\cup B)^c}$, cf.~\eqref{cappot4}, which involves an integration over
$x$. In the vicinity of the saddle, we carry out the normal-form transformation
of Proposition~\ref{prop_scodim1} or~\ref{prop_scodim2} in the integral. This
can always be done locally, setting $x=y+\rho\circ g(y)$, where $\rho$ is a
smooth cut-off function which is the identity in a small ball of radius
$\Delta$, and identically zero outside a larger ball of radius $2\Delta$. Inside
the smaller ball, we may thus assume that the potential is given
by~\eqref{main0}. 

Let $\delta=2\sqrt{(1+c)d\eps\abs{\log\eps}}$. We introduce a set 
\begin{equation}
\label{main1-2:2}
C_\eps = [-\delta_1,\delta_1] \times \cB_{\delta_2}(0) \times 
\prod_{j=q+1}^d [-\delta_j,\delta_j]\;,
\end{equation}
where $\delta_1$ and $\delta_2$ satisfy~\eqref{main1} and~\eqref{main1A} 
and we choose 
\begin{equation}
\label{main1-2:3}
\delta_j = \frac{\delta}{\sqrt{\lambda_j}}\;,
\quad
j=q+1,\dots, d\;.
\end{equation}
By assumption, making $\eps$ small enough we can construct a layer 
$\cS_\eps$ of width $2\delta_1$, separating the connected components of the
open valley $\OV{0}$, and such that $V(y)$ is strictly positive for all
$y\in\cS_\eps\setminus C_\eps$. Let $D_-$ and $D_+$ denote the connected
components of $\R^d\setminus\cS_\eps$ containing $A$ and $B$
respectively. Note that we can choose a radius $\Delta$ for the ball
$B_{\Delta}(0)$ in which we carry out the normal form transformation
independently of $\eps$ in such a way that $V$ is bounded away from zero on
$\cS_{\eps} \cap B_{\Delta}(0)^{\text{c}}$.

The variational principle~\eqref{cappot4}
implies that it is sufficient to construct a function
$h^+\in\cH_{A,B}$ such that $\Phi_{(A\cup B)^c}(h^+)$ satisfies the upper
bound. We choose 
\begin{equation}
\label{main1-2:4}
h^+(y) = 
\begin{cases}
1 & \text{for $y\in D_-$\;,}\\
0 & \text{for $y\in D_+$\;,}\\
f(y_1) & \text{for $y\in C_\eps$\;,}
\end{cases}
\end{equation}
while $h^+(y)$ is arbitrary for $y\in\cS_\eps\setminus C_\eps$, except that we
require $\Norm{\nabla h^+} \leqs \const/\delta_1$. The function $f(y_1)$ is
chosen as the solution of the one-dimensional differential equation  
\begin{equation}
\label{main1-2:5}
\eps f''(y_1) - \dpar V{y_1}(y_1,0,\dots,0) f'(y_1) = 0
\end{equation}
with boundary conditions $1$ in $-\delta_1$ and $0$ in $\delta_1$, that is,
\begin{equation}
\label{main1-2:6}
f(y_1) = 
\frac{\displaystyle \int_{y_1}^{\delta_1} \e^{V(t,0,\dots,0)/\eps} \6t}
{\displaystyle \int_{-\delta_1}^{\delta_1} \e^{V(t,0,\dots,0)/\eps} \6t}\;.
\end{equation}
Inserting $h^+$ into the expression~\eqref{cappot3} of the capacity, we obtain
two non-vanishing terms, namely the integrals over $\cS_\eps\setminus
C_\eps$ and over $C_\eps$. The first of these can be bounded  as follows. 
For sufficiently small $\Delta$,  Assumptions~\eqref{main1} and~\eqref{main1A}
imply that for all $y\in(\cS_\eps\setminus C_\eps)\cap B_{\Delta}(0)$,
\begin{align}
\nonumber
\frac{V(y)}{\eps}
&\geqs -d\abs{\log\eps} - cd\abs{\log\eps} + 2(1+c)d \abs{\log\eps} +
\bigOrder{\eps^{-1}\bigbrak{\delta+\delta_1+\delta_2}^{r+1}} \\
&\geqs \frac34 d \abs{\log\eps}
\label{main1-2:6B}
\end{align}
for sufficiently small $\eps$. On $(\cS_\eps\setminus C_\eps)\cap
B_{\Delta}(0)^{\text{c}}$, $V$ is bounded away from zero, and by the assumption
of exponentially tight level sets, the contribution from this part of the
integral is negligible. It follows that 
\begin{equation}
\label{main1-2:7}
\eps \int_{\cS_\eps\setminus C_\eps} \e^{-V(y)/\eps}
\frac{\const}{\delta_1^2} \,\6y
= \bigOrder{\eps^{3d/4+1}\delta_1^{-2}}
\bydef R_2(\eps)\;.
\end{equation}

When calculating the second term which is given by 
\begin{equation}
\Phi_{C_\eps}(h^+) 
= \eps 
\int_{C_\eps} \e^{-V(y)/\eps} \abs{f'(y_1)}^2 \,\6y 
= \eps 
\frac{\displaystyle \int_{C_\eps}\e^{-V(y)/\eps}\e^{2V(y_1,0,\dots,0)/\eps}
\,\6y}
{\biggpar{\displaystyle
\int_{-\delta_1}^{\delta_1}\e^{V(y_1,0,\dots,0)/\eps}\,\6y_1}^2}\;,
\label{main1-2:8}
\end{equation}
it is sufficient to carry out the normal form transformation in a smaller ball
of radius $2\tilde\Delta$ with $\tilde\Delta = \sqrt{d} \max\set{\delta,
\delta_1, \delta_2}$ as $B_{\tilde\Delta}(0)$ contains $C_{\eps}$. The Jacobian
of the transformation thus yields a multiplicative error term
$1+\Order{\tilde\Delta}$.
By~\eqref{main0}, we have for $y\in C_\eps$ 
\begin{equation}
\label{main1-2:9}
V(y) - 2 V(y_1,0,\dots,0) 
= u_1(y_1) 
+ u_2(y_2,\dots,y_q) 
+ \frac12 \sum_{j=q+1}^d \lambda_j y_j^2 
+ \bigOrder{\brak{\delta+\delta_1+\delta_2}^{r+1}}\;.
\end{equation}
Hence the numerator in~\eqref{main1-2:8} is given by 
\begin{multline}
\label{main1-2:10}
\int_{-\delta_1}^{\delta_1} \e^{-u_1(y_1)/\eps} \,\6y_1 
\int_{\cB_{\delta_2}(0)} \e^{-u_2(y_2,\dots,y_q)/\eps} \,\6y_2\dots\6y_q
\prod_{j=q+1}^d 
\int_{-\delta_j}^{\delta_j} \e^{-\lambda_jy_j^2/2\eps} \,\6y_j \\
\times 
\biggbrak{1+\biggOrder{\frac{\brak{\delta+\delta_1+\delta_2}^{r+1}}{\eps}}}\;.
\end{multline}
Substituting in~\eqref{main1-2:8} we get 
\begin{multline}
\label{main1-2:11}
\Phi_{C_\eps}(h^+) = \eps \,
\frac{\displaystyle
\int_{\cB_{\delta_2}(0)} \e^{-u_2(y_2,\dots,y_q)/\eps} \,\6y_2\dots\6y_q}
{\displaystyle
\int_{-\delta_1}^{\delta_1} \e^{-u_1(y_1)/\eps} \,\6y_1 }
\prod_{j=q+1}^d 
\int_{-\delta_j}^{\delta_j} \e^{-\lambda_jy_j^2/2\eps} \,\6y_j \\
\times
\biggbrak{1+\biggOrder{\frac{\brak{\delta+\delta_1+\delta_2}^{r+1}}{\eps}}}\;.
\end{multline}
Using the fact that the Gaussian integrals over $y_j$, $j=q+1,\dots,d$, are 
bounded above by $\sqrt{2\pi\eps/\lambda_j}$, the desired bound~\eqref{main1B}
follows.
\end{proof}

\begin{remark}
\label{rem_upperbnd}
Using the conditions~\eqref{main1} in order to bound the integrals over $y_1$
and $(y_2,\dots,y_q)$, one obtains as a rough {\it a priori\/} bound 
\begin{equation}
\label{main_upper_1}
\capacity_{A}(B) \leqs \const \frac{\delta_2^{q-1}}{\delta_1}
\eps^{1-q/2-(c+1/2)d}[1+R_{1}(\eps)]+R_{2}(\eps)\;.
\end{equation}
In applications we will of course obtain much sharper bounds by using explicit
expressions for $u_1(y_1)$ and $u_2(y_2,\dots,y_q)$, but the above rough bound
will be sufficient to obtain a lower bound on the capacity, valid
without further knowledge of the functions $u_1$ and $u_2$. 
\end{remark}


\subsection{Lower bound on the capacity}
\label{ssec_lowerbnd}


Before we proceed to deriving a lower bound on the capacity, we need a crude
bound on the equilibrium potential $h_{A,B}$. We obtain such a bound by adapting
similar results from~\cite[Section~4]{BEGK} to the present situation.

\begin{lemma}
\label{lem_apriori}
Let $A$ and $B$ be disjoint sets, and let $x\in(A\cup B)^c$ be such that the
ball $\cB_{\eps}(x)$ does not intersect $A\cup B$. Then there exists a constant
$C$ such that 
\begin{equation}
\label{apriori1}
h_{A,B}(x) \leqs C \eps^{-d} \capacity_{\cB_\eps(x)}(A)
\e^{\Vbar(\set{x},B)/\eps}\;.
\end{equation}
\end{lemma}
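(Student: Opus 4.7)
The bound in Lemma~\ref{lem_apriori} is a crude \emph{a priori} estimate of the same nature as~\cite[Lemma~4.3]{BEGK}. My plan is to combine a Harnack-type averaging argument on $\cB_\eps(x)$ with the potential-theoretic interpretation of the capacity via reversibility.

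First, since $h_{A,B}$ is non-negative and satisfies $L h_{A,B}=0$ on $(A\cup B)^c\supset\cB_\eps(x)$, I would apply a standard elliptic Harnack inequality on $\cB_\eps(x)$. After rescaling via $y=x+\eps\tilde y$, the operator $L=\eps\Delta-\nabla V\cdot\nabla$ becomes, up to the common factor $\eps^{-1}$, a uniformly elliptic operator with smooth bounded coefficients on the unit ball (using that $V\in\cC^1$ with $\nabla V$ bounded on compact sets). Hence the Harnack constant can be chosen independently of $\eps$ and $x$, yielding
\[
h_{A,B}(x) \leqs \frac{C_1}{\abs{\cB_\eps(x)}}\int_{\cB_\eps(x)}h_{A,B}(y)\,\6y
\leqs C_1'\,\eps^{-d}\int_{\cB_\eps(x)}h_{A,B}(y)\,\6y,
\]
which already produces the prefactor $\eps^{-d}$.

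Next, I would convert this integral into a statement involving $\capacity_{\cB_\eps(x)}(A)$. Using that $V(y)=V(x)+\Order{\eps}$ on $\cB_\eps(x)$, one may introduce the weight $e^{-V/\eps}$ at the cost of an $\Order{1}$ multiplicative constant, so it suffices to bound $\int_{\cB_\eps(x)}h_{A,B}(y)e^{-V(y)/\eps}\6y$. Here I would exploit reversibility of the diffusion with respect to $e^{-V/\eps}\6y$ via a time-reversal/Green-function representation: the integrand is the density of trajectories reaching $A$ before $B$, issued from the weighted uniform measure on $\cB_\eps(x)$; time-reversal turns these into trajectories from $A$ to $\cB_\eps(x)$ avoiding $B$. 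Their total weighted mass is controlled, on the one hand, by the capacity $\capacity_{\cB_\eps(x)}(A)$ (which represents the integrated ``throughput'' between $\cB_\eps(x)$ and $A$) and, on the other hand, by the expected time a reversed trajectory needs to reach $\cB_\eps(x)$ while avoiding $B$. A crude Wentzell--Freidlin-type large-deviation bound shows that this latter expectation is bounded by $C_2\,e^{[\Vbar(\set{x},B)-V(x)]/\eps}$, because any path from $A$ to $\cB_\eps(x)$ must either cross the level $\Vbar(\set{x},B)$ or hit $B$ first. Combining these two bounds and cancelling the factor $e^{-V(x)/\eps}$ against the one absorbed in step one yields the claim with $C=C_1'C_2$.

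The main obstacle lies in this second step: the Harnack averaging is textbook, but the conversion from the weighted integral of $h_{A,B}$ into $\capacity_{\cB_\eps(x)}(A)$ requires a careful combination of the reversibility identity with a large-deviation bound on the Green function of the process killed on $B$. I would essentially adapt the arguments of~\cite[Section~4]{BEGK}, which establishes this type of estimate in the discrete-state setting, to our continuous diffusion, taking advantage of the fact that no sharp constants are needed since the final inequality is only used as a rough \emph{a priori} control.
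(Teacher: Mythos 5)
Your first step is fine: after the rescaling $y=x+\eps\tilde y$ the generator becomes (up to a factor $\eps^{-1}$) a uniformly elliptic operator with locally bounded drift, so an interior Harnack inequality on $\cB_\eps(x)$ does give $h_{A,B}(x)\leqs C\eps^{-d}\int_{\cB_\eps(x)}h_{A,B}$, uniformly in $\eps$ (and in $x$ on compacts); this is in fact the same device that underlies the proof of Proposition~4.3 in~\cite{BEGK}. The gap is in your second step, which is asserted rather than proved, and whose claimed form does not follow from the reversibility identity you invoke. The Green-function/equilibrium-measure representation (the identity behind~\eqref{cappot5}) gives
\begin{equation*}
\int_{\cB_\eps(x)} h_{A,B}(y)\,\e^{-V(y)/\eps}\,\6y
=\int_{\partial A}\e^{-V(z)/\eps}\,
\Bigexpecin{z}{\int_0^{\tau_B}\indicator{\cB_\eps(x)}(x_s)\,\6s}\,e_{A,B}(\6z)\;,
\end{equation*}
where $e_{A,B}$ is the equilibrium measure on $\partial A$; the quantities that appear are therefore $\capacity_A(B)$ (the total mass of $\e^{-V/\eps}e_{A,B}$) and an expected \emph{occupation} time of the ball before $\tau_B$, started on $\partial A$ --- not $\capacity_{\cB_\eps(x)}(A)$ times an expected time to reach the ball. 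Your claimed bound of that ``expected time'' by $C_2\e^{[\Vbar(\set{x},B)-V(x)]/\eps}$ is also unsubstantiated: the observation that every path from $A$ to $\cB_\eps(x)$ must reach the level $\Vbar(\set{x},B)$ (or hit $B$ first) constrains exponential costs of events, not expected times; the quantity actually needed is the expected time spent near $x$ before hitting $B$, which is governed by a ratio of the type $\mu(\cB_\eps(x))/\capacity_{\cB_\eps(x)}(B)$, i.e.\ by precisely the kind of capacity lower bound you have not supplied.

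This missing estimate is the real content of the lemma. The paper obtains it by quoting \cite[Proposition~4.3]{BEGK}, $h_{A,B}(x)\leqs C\,\capacity_{\cB_\eps(x)}(A)/\capacity_{\cB_\eps(x)}(B)$, and then bounding the denominator from below as in \cite[Proposition~4.7]{BEGK} with $\rho=\eps$: a thin tube of radius $\eps$ around a critical path from $x$ to $B$, evaluated by Laplace asymptotics, gives $\capacity_{\cB_\eps(x)}(B)\geqs c\,\eps^{d}\e^{-\Vbar(\set{x},B)/\eps}$ (with a constant rather than $\sqrt\eps$ correction because nothing is assumed about the saddles). Note in particular that the factor $\eps^{-d}$ in~\eqref{apriori1} arises from the $\eps\cdot\eps^{d-1}$ of the Dirichlet form over the tube's cross-section, not from the volume factor of a Harnack average. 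To make your route work you would have to prove an estimate equivalent to this tube bound (or directly bound the occupation time above); the time-reversal heuristic as written does not do so.
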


\begin{proof}
\cite[Proposition~4.3]{BEGK} provides the upper bound 
\begin{equation}
\label{apriori2:1}
h_{A,B}(x) \leqs C
\frac{\capacity_{\cB_\eps(x)}(A)}{\capacity_{\cB_\eps(x)}(B)}\;,
\end{equation}
so that it suffices to obtain a lower bound for the denominator. This is done
as in \cite[Proposition~4.7]{BEGK} with $\rho=\eps$, cf. in particular
Equation~(4.26) in that work, which provides a lower bound for the capacity in
terms of an integral of $\e^{V/\eps}$ over a critical path from $x$ to $B$.
Evaluating the integral by the Laplace method, one gets
$\e^{\Vbar(\set{x},B)/\eps}$ as leading term, with a multiplicative correction. 
The only difference is that while Bovier {\it et
al} assume quadratic saddles, which yields a correction of order 
$\sqrt{\eps}$, here we do not assume anything on the saddles, so that in the
worst case the prefactor is constant. This yields the bound~\eqref{apriori1}.
\end{proof}

The capacity $\capacity_{\cB_\eps(x)}(A)$ behaves roughly like
$\e^{-\Vbar(\set{x},A)/\eps}$, so that the bound~\eqref{apriori1} is useful
whenever $\smash{\Vbar(\set{x},A)\gg\Vbar(\set{x},B)}$. This is the case, in
particular, when $A$ and $B$ belong to different \pathconnected\ components of
the open valley of a saddle $z$, and $x$ belongs to the same component as $B$.
If, by contrast, $x$ belongs to the same component as $A$, the symmetry
$h_{A,B}(x)=1-h_{B,A}(x)$ yields a lower bound for the equilibrium potential
which is close to~$1$.


We now consider the same situation as in Section~\ref{ssec_upperbnd}. Let
$\delta_1(\eps)$, $\delta_2(\eps)$ and $c$ be the constants introduced in
Proposition~\ref{prop_main1-1}. 

\begin{prop}[Lower bound]
\label{prop_main1-2}
Let $K > \max\set{2d(2+c),d-q}$, and assume that
$\delta_1(\eps)\geqs\eps$.\footnote{It actually suffices to have $\delta_{1}
\geqs \eps^{K/2+1/4-d/8}$ which is a very weak condition satisfied whenever
$\delta_{1} \geqs \eps^{\kappa}$ for a $\kappa>1$ depending on $c,d$ and $q$.}
Furthermore, suppose there exist strictly positive numbers
$\deltahat_1=\deltahat_1(\eps)$ and
$\deltahat_2=\deltahat_2(\eps)$ such that 
\begin{align}
\nonumber
u_1(\pm\deltahat_1) &\geqs 4K\eps\abs{\log\eps}\;, 
&&  \\
\nonumber
u_2(y_2,\dots,y_q) &\leqs K\eps\abs{\log\eps} \;,
&& \text{whenever $\Norm{(y_2,\dots,y_q)}\leqs\deltahat_2$\;,} 
\label{main2}
\end{align}
and such that 
\begin{equation}
\label{main2A}
\bigbrak{\deltahat_1(\eps)+\deltahat_2(\eps)}^{r+1} =
\order{\eps\abs{\log\eps}}\;.
\end{equation}
Then 
\begin{equation}
\label{main2B}
\capacity_A(B) \geqs   
\eps \, \frac
{\displaystyle\int_{\cB_{\deltahat_2}(0)} \e^{-u_2(y_2,\dots,y_q)/\eps}
\,\6y_2\dots\6y_q}
{\displaystyle\int_{-\deltahat_1}^{\deltahat_1} \e^{-u_1(y_1)/\eps} \,\6y_1}
\prod_{j=q+1}^d 
\sqrt{\frac{2\pi\eps}{\lambda_j}} 
\bigbrak{1-R_3(\eps)}\;,
\end{equation}
where the remainder $R_3(\eps)$ satisfies 
\begin{equation}
\label{main2C}
R_3(\eps) \leqs {}
	\begin{cases}
	C\bigbrak{\eps^{1/2}\abs{\log\eps}^{3/2} 
	+ \eps^{-1}(\deltahat_1^{r+1} +	\deltahat_2^{r+1})
	+ \deltahat_1 + \deltahat_2 + \sqrt{\eps} }
	& \text{for $r=2$,} 
	\\
	C\bigbrak{\eps^{1/2}\abs{\log\eps}^{1/2}
	+ \eps^{-1}(\deltahat_1^{r+1} +\deltahat_2^{r+1})
	+ \deltahat_1 + \deltahat_2 + \sqrt{\eps}}
	& \text{for $r>2$,} 
	\end{cases}
\end{equation}
for some constant $C>0$.
\end{prop}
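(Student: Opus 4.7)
The plan is to imitate the proof of Proposition~\ref{prop_main1-1}, but now bounding the Dirichlet form $\Phi_{(A\cup B)^c}(h_{A,B})$ from below by restricting the domain of integration to a neighbourhood $\widehat C_\eps = [-\deltahat_1,\deltahat_1]\times\cB_{\deltahat_2}(0)\times\prod_{j=q+1}^d[-\deltahat_j,\deltahat_j]$ of the saddle (with $\deltahat_j = \delta/\sqrt{\lambda_j}$ and $\delta=2\sqrt{(1+c)d\eps\abs{\log\eps}}$ for the quadratic directions, as in the upper bound), working throughout in the normal-form coordinates of~\eqref{main0}. Discarding all transverse components of $\nabla h_{A,B}$ yields
\begin{equation*}
\capacity_A(B) \geqs \eps \int_{\widehat C_\eps} \e^{-V(y)/\eps}\abs{\partial_{y_1}h_{A,B}(y)}^2 \,\6y\;,
\end{equation*}
and for each fixed transverse slice $y'=(y_2,\dots,y_d)$, the one-dimensional Cauchy--Schwarz inequality gives
\begin{equation*}
\int_{-\deltahat_1}^{\deltahat_1}\e^{-V(y_1,y')/\eps}\abs{\partial_{y_1}h_{A,B}}^2\,\6y_1 \geqs \frac{[h_{A,B}(\deltahat_1,y')-h_{A,B}(-\deltahat_1,y')]^2}{\displaystyle\int_{-\deltahat_1}^{\deltahat_1}\e^{V(y_1,y')/\eps}\,\6y_1}\;.
\end{equation*}

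The key analytic step is to show that the boundary increment in the numerator is essentially $1$. The hypothesis $u_1(\pm\deltahat_1)\geqs 4K\eps\abs{\log\eps}$ places the points $(\pm\deltahat_1,y')$ at a potential value strictly below $V(z)$ by a margin of order $\eps\abs{\log\eps}$, well inside the two opposite open valleys meeting at $z$. Applying Lemma~\ref{lem_apriori} (once in its stated form and once with the roles of $A$ and $B$ interchanged), together with the crude capacity bound of Remark~\ref{rem_upperbnd}, would then yield $h_{A,B}(\deltahat_1,y')=\Order{\eps^{K}}$ and $1-h_{A,B}(-\deltahat_1,y')=\Order{\eps^{K}}$ uniformly in $y'$, so that the squared increment contributes a factor $1-\Order{\eps^K}$. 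The technical condition $K>\max\set{2d(2+c),d-q}$ is precisely what makes the exponential gain $\eps^{4K}$ coming from the potential barrier absorb the polynomial prefactor $\eps^{-d}$ of Lemma~\ref{lem_apriori} together with the subexponential factor $\eps^{1-q/2-(c+1/2)d}$ in the rough capacity bound.

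To conclude, I would insert the normal form into the denominator $\int_{-\deltahat_1}^{\deltahat_1}\e^{V(y_1,y')/\eps}\6y_1$: the quadratic part separates, the $u_1$-part gives $\int_{-\deltahat_1}^{\deltahat_1}\e^{-u_1(y_1)/\eps}\6y_1$, and the normal-form remainder $\Order{\Norm{y}^{r+1}}$ contributes a multiplicative factor $1+\Order{\eps^{-1}(\deltahat_1+\deltahat_2)^{r+1}}$ by virtue of~\eqref{main2A}. Integrating the resulting lower bound over the transverse box $\cB_{\deltahat_2}(0)\times\prod_j[-\deltahat_j,\deltahat_j]$ and extending the truncated Gaussians in $y_{q+1},\dots,y_d$ to $\R$ (with exponentially small Gaussian-tail error) produces the right-hand side of~\eqref{main2B}. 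Collecting all errors --- normal-form Jacobian $\Order{\deltahat_1+\deltahat_2}$, normal-form truncation $\Order{\eps^{-1}(\deltahat_1^{r+1}+\deltahat_2^{r+1})}$, Gaussian tails and Laplace corrections of order $\Order{\sqrt\eps}$ and $\Order{\eps^{1/2}\abs{\log\eps}^{1/2}}$ (with an extra $\abs{\log\eps}$ in the threshold case $r=2$, where the normal-form and Gaussian errors become of comparable order), and the negligible $\Order{\eps^K}$ from the boundary increment --- reproduces the claimed form of $R_3(\eps)$ in~\eqref{main2C}.

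The main obstacle, and the reason for the somewhat awkward conditions on $K$ and $\deltahat_1\geqs\eps$, is the \emph{a priori} control of $h_{A,B}$ at $\partial\widehat C_\eps$: Lemma~\ref{lem_apriori} is only useful when paired with an upper bound on $\capacity_{\cB_\eps(x)}(A)$, and at this stage only the very crude estimate of Remark~\ref{rem_upperbnd} is available. This is precisely where the lower-bound argument diverges from the upper bound, since there one can work with a tailored test function $h^{+}$ satisfying exact boundary conditions, and no bound on the true equilibrium potential is needed.
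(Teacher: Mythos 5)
Your overall architecture is the same as the paper's: restrict the Dirichlet form of the equilibrium potential to a box around the saddle, discard the transverse gradient components, bound the one-dimensional problem on each transverse slice (your Cauchy--Schwarz step yields exactly the same bound as the paper's explicit Euler--Lagrange minimiser \eqref{main1-2:26}--\eqref{main1-2:28}), and control $h_{A,B}$ on the faces $y_1=\pm\deltahat_1$ via Lemma~\ref{lem_apriori} combined with the rough bound of Remark~\ref{rem_upperbnd}. The genuine gap is quantitative and sits in the one step everything hinges on: the boundary estimate. You keep the upper bound's transverse box, $\deltahat_j=\delta/\sqrt{\lambda_j}$ with $\delta=2\sqrt{(1+c)d\eps\abs{\log\eps}}$, and assert that $u_1(\pm\deltahat_1)\geqs 4K\eps\abs{\log\eps}$ places the whole face at potential below $V(z)$ by a margin of order $\eps\abs{\log\eps}$. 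But on that face $V=-u_1(\pm\deltahat_1)+u_2+\frac12\sum_{j>q}\lambda_jy_j^2+\Order{(\deltahat_1+\deltahat_2+\delta)^{r+1}}$, and with your box the quadratic part can contribute up to $2(1+c)d(d-q)\eps\abs{\log\eps}$ while $u_2$ may contribute another $K\eps\abs{\log\eps}$; the hypothesis $K>\max\set{2d(2+c),d-q}$ does not make $-4K+K+2(1+c)d(d-q)$ negative (take $d=10$, $q=2$, $c=0$, $K$ slightly above $40$). Hence the factor $\e^{V(\pm\deltahat_1,y_\perp)/\eps}$ appearing in Lemma~\ref{lem_apriori} need not be small, and the claimed smallness of $h_{A,B}(\deltahat_1,y')$ (and of $1-h_{A,B}(-\deltahat_1,y')$) is unjustified. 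The paper removes precisely this obstruction by recalibrating the box in the quadratic directions, taking $\deltahat_j=\delta/\sqrt{(d-q)\lambda_j}$ with $\delta=\sqrt{K\eps\abs{\log\eps}}$, so that the quadratic contribution is at most $\tfrac12 K\eps\abs{\log\eps}$ and $V\leqs-K\eps\abs{\log\eps}$ on the faces (cf.~\eqref{main1-2:22B}); the requirement $K>d-q$ then also ensures that the correspondingly truncated Gaussian integrals still reproduce $\prod_j\sqrt{2\pi\eps/\lambda_j}$ up to a controlled error --- note these tails are only polynomially small, of order $\eps^{K/2(d-q)}$, not exponentially small as you state.

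A second, smaller inaccuracy: even with the correct box, Lemma~\ref{lem_apriori} paired with Remark~\ref{rem_upperbnd} does not give boundary values within $\Order{\eps^K}$ of $0$ and $1$; the prefactors $\eps^{-d}$ and $\delta_2^{q-1}\delta_1^{-1}\eps^{1-q/2-(c+1/2)d}$ eat into $\eps^{K}$, and the conditions $K>2d(2+c)$ and $\delta_1\geqs\eps$ are tuned so that what survives is only $\Order{\eps^{1/2}}$, cf.~\eqref{main1-2:22C}--\eqref{main1-2:22D}. That is exactly the origin of the $\sqrt\eps$ term in $R_3(\eps)$, which you do include but attribute to Gaussian tails and Laplace corrections. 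These are fixable bookkeeping points, but as written the key a priori estimate on the equilibrium potential at the box faces does not go through with your choice of $\deltahat_j$.
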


\begin{proof}
As in the proof of Proposition~\ref{prop_main1-1}, we start by locally carrying
out the normal form transformation in the integral defining the Dirichlet form. 
Next we define a slightly different neighbourhood of the saddle, 
\begin{equation}
\label{main1-2:21}
\Chat_\eps = [-\deltahat_1,\deltahat_1] \times \cB_{\deltahat_2}(0)
\times \prod_{j=q+1}^d [-\deltahat_j,\deltahat_j]
= [-\deltahat_1,\deltahat_1] \times \Chat_\eps^\perp\;,
\end{equation}
where we now choose 
\begin{equation}
\label{main1-2:22}
\deltahat_j = \frac{\delta}{\sqrt{(d-q)\lambda_j}}\;,
\quad
j=q+1,\dots, d\;,
\end{equation}
with $\delta=\sqrt{K\eps\abs{\log\eps}}$.
The reason for this choice is that we want the potential to be smaller than
$-\delta^2$ on the \lq\lq sides\rq\rq\ $\set{\pm\deltahat_1}\times
\smash{\Chat_\eps^\perp}$ of the box. Indeed, we have for all $y_\perp \in
\smash{\Chat_\eps^\perp}$,
\begin{align}
\nonumber
\frac{V(\pm\deltahat_1,y_\perp)}{\eps}
&\leqs -4K\abs{\log\eps} + K\abs{\log\eps} + (d-q)\frac{\delta^2}{2\eps(d-q)}
+ \bigOrder{\eps^{-1}\brak{\delta+\deltahat_1+\deltahat_2}^{r+1}} \\
&\leqs -K\abs{\log\eps}
\label{main1-2:22B}
\end{align}
for sufficiently small $\eps$. As a consequence, 
if $h^\star=h_{A,B}$ denotes the equilibrium potential, Lemma~\ref{lem_apriori}
and the {\it a priori\/} bound~\eqref{main_upper_1} yield 
\begin{align}
\label{main1-2:22C}
h^\star(\deltahat_1,y_\perp) & {}=
\BigOrder{\eps^{-d}
\Bigbrak{\frac{\delta_{2}^{q-1}}{\delta_{1}}\eps^{1-q/2-(c+1/2)d}\brak{1+R_{1}
(\eps)}+R_{2}(\eps)}
\e^{V(\deltahat_1,y_\perp)/\eps}} \\ \nonumber
&{}= \BigOrder{\eps^{-d}
\Bigbrak{\frac{{\order{(\eps\abs{\log\eps}}}^{(q-1)/(r+1)})}{\eps^{K/2+1/4-d/8}}
\eps^{1-q/2-(c+1/2)d}\brak{1+\abs{\log\eps}}
}
\eps^{K} + \eps^{1/2}
}  \\
&= \bigOrder{\eps^{1/2}}
\end{align}
while 
\begin{equation}
\label{main1-2:22D}
h^\star(-\deltahat_1,y_\perp) = 1 - \bigOrder{\eps^{1/2}}\;.
\end{equation}
We can now proceed to deriving the lower bound. Observe that 
\begin{equation}
\label{main1-2:23}
\capacity_A(B) = \Phi_{(A\cup B)^c}(h^\star) \geqs
\Phi_{\Chat_\eps}(h^\star)\;.
\end{equation}
Now we can write, for any $h\in\cH_{A,B}$, 
\begin{align}
\nonumber
\Phi_{\Chat_\eps}(h) 
&\geqs \eps\int_{\Chat_\eps} \e^{-V(y)/\eps} 
\biggpar{\dpar h{y_1}}^2 \,\6y \\
\nonumber
&= \eps\int_{\Chat_\eps^\perp} \int_{-\deltahat_1}^{\deltahat_1} 
\e^{-V(y)/\eps} \biggpar{\dpar{h(y_1,y_\perp)}{y_1}}^2 \,\6y_1 \6y_\perp \;,
\label{main1-2:24A}
\end{align}
and thus 
\begin{equation}
\Phi_{\Chat_\eps}(h^\star)\geqs\eps\int_{\Chat_\eps^\perp} 
\biggbrak{
\inf_{f\colon f(\pm\deltahat_1)=h^\star(\pm\deltahat_1,y_\perp)}
 \int_{-\deltahat_1}^{\deltahat_1} \e^{-V(y)/\eps} f'(y_1)^2 \,\6y_1}
\6y_\perp\;.
\label{main1-2:24}
\end{equation}
The Euler--Lagrange equation for the variational problem is
\begin{equation}
\label{main1-2:25}
\eps f''(y_1) - \dpar V{y_1}(y_1,y_\perp) f'(y_1) = 0
\end{equation}
with boundary conditions $h^\star(-\deltahat_1,y_\perp)$ in $-\deltahat_1$ and
$h^\star(\deltahat_1,y_\perp)$ in $\deltahat_1$, and has the solution 
\begin{equation}
\label{main1-2:26}
f(y_1) = h^\star(\deltahat_1,y_\perp) - 
\bigbrak{h^\star(\deltahat_1,y_\perp) - h^\star(-\deltahat_1,y_\perp)}
\frac{\displaystyle \int_{y_1}^{\deltahat_1} \e^{V(t,y_\perp)/\eps} \6t}
{\displaystyle \int_{-\deltahat_1}^{\deltahat_1} \e^{V(t,y_\perp)/\eps} \6t}\;.
\end{equation}
As a consequence, 
\begin{equation}
\label{main1-2:27}
f'(y_1) = \bigbrak{h^\star(\deltahat_1,y_\perp) -
h^\star(-\deltahat_1,y_\perp)} 
\frac{\e^{V(y_1,y_\perp)/\eps}}
{\displaystyle \int_{-\deltahat_1}^{\deltahat_1} \e^{V(t,y_\perp)/\eps} \6t}\;,
\end{equation}
so that substitution in~\eqref{main1-2:24} yields 
\begin{equation}
\label{main1-2:28}
\Phi_{\Chat_\eps}(h^\star) 
\geqs \eps \int_{\Chat_\eps^\perp} 
\frac{\bigbrak{h^\star(\deltahat_1,y_\perp) -
h^\star(-\deltahat_1,y_\perp)}^2}
{\displaystyle \int_{-\deltahat_1}^{\deltahat_1} \e^{V(t,y_\perp)/\eps} \6t}
\,\6y_\perp\;.
\end{equation}
The bounds~\eqref{main1-2:22C} and \eqref{main1-2:22D} on $h^\star$ show that
the numerator is of the form $1-\Order{\eps^{1/2}}$. It now suffices to use the
normal form~\eqref{main0} of the potential, and to carry out integration with
respect to
$y_{q+1},\dots y_d$. 
\end{proof}


\subsection{Non-quadratic saddles}
\label{ssec_nonquad}

\begin{proof}[{\sc Proof of Theorem~\ref{thm_cap01}}]
For the upper bound, it suffices to apply Proposition~\ref{prop_main1-1}
in the case $r=4$, $q=2$, $u_1(y_1)=C_4y_1^4$ and
$u_2(y_2)=\lambda_2y_2^2/2$. The conditions for the upper bound are fulfilled
for $\delta_1=(d\eps\abs{\log\eps}/C_4)^{1/4}$,
$\delta_2=2(d\eps\abs{\log\eps}/\lambda_2)^{1/2}$ and $c=0$. This yields error
terms $R_1(\eps)=\Order{\eps^{1/4}\abs{\log\eps}^{5/4}}$ and
$R_2(\eps)=\Order{\eps^{d/2+1}/\abs{\log\eps}^{1/2}}$. The integrals over $y_1$
and $y_2$ can be computed explicitly as extending their bounds to $\pm\infty$
only produces a negligible error, and we see that the contribution of
$R_{2}(\eps)$ is also negligible. A matching lower bound is obtained in a
completely
analogous way, using Proposition~\ref{prop_main1-2} with $\deltahat_{1}$,
$\deltahat_{2}$ of the same order as $\delta_{1}$, $\delta_{2}$, respectively,
yielding that $R_{3}(\eps)$ is of the same order as $R_{1}(\eps)$. 
\end{proof}

\begin{proof}[{\sc Proof of Theorem~\ref{thm_cap02}}]
The proof is essentially the same as the previous one, only with the r\^oles of
$\delta_1$ and $\delta_2$ interchanged. 
\end{proof}

\begin{proof}[{\sc Proof of Theorem~\ref{thm_cap001}}]
We again apply Propositions~\ref{prop_main1-1} and~\ref{prop_main1-2}, now with
$q=3$, $u_1(y_1)=\abs{\lambda_1}y_1^2/2$ and $u_2(y_2,y_3)=V_4(y_2,y_3)$. The
conditions for the upper bound are fulfilled for
$\delta_1=(2d\eps\abs{\log\eps}/\abs{\lambda_1})^{1/2}$,
$\delta_2=(2d\eps\abs{\log\eps}/K_-)^{1/4}$ and $c=0$, and similarly for the
lower bound. This yields error terms
$R_1(\eps)=\Order{\eps^{1/4}\abs{\log\eps}^{5/4}}$ and
$R_2(\eps)=\Order{\eps^{(d+1)/2}/\abs{\log\eps}}$. The integral over $y_1$ can
be computed
explicitly. Writing the integral over $y_2$ and $y_3$ in polar coordinates, and
performing the integration over $r$ yields the stated expression.
\end{proof}



\subsection{Bifurcations}
\label{ssec_bifs}


We decompose the proof of Theorem~\ref{thm_pitch1} into several steps, dealing
with positive and negative $\lambda_{2}$ separately.

\begin{prop}
\label{prop_proof1}
Under the assumptions of Theorem~\ref{thm_pitch1}, and for $\lambda_2>0$, 
\begin{equation}
\label{proof1}
\capacity_A(B) = \frac{I_{a,\eps}}{(2C_4)^{1/4}}
\sqrt{\frac{(2\pi)^{d-3}\abs{\lambda_1}}{\lambda_3\dots\lambda_d}}
\eps^{d/2-1/2} \e^{-V(z)/\eps} \bigbrak{1+R_+(\eps)}\;,
\end{equation}
where $R_+(\eps)$ is defined in~\eqref{pitch10}, $I_{a,\eps}$ is the
integral
\begin{equation}
\label{proof2}
I_{a,\eps} = \int_{-\infty}^\infty \e^{-(x^4+a x^2)/2\eps} \6x\;,
\end{equation}
and $a=\lambda_2/\sqrt{2C_4}$.
\end{prop}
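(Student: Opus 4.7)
The plan is to specialize Propositions~\ref{prop_main1-1} and~\ref{prop_main1-2} to the normal form~\eqref{pitch01}, taking $q=2$, $r=4$, $u_1(y_1)=\frac12\abs{\lambda_1}y_1^2$, and $u_2(y_2)=\frac12\lambda_2y_2^2 + C_4 y_2^4$, and absorbing the remaining directions $y_3,\dots,y_d$ into the Gaussian product $\prod_{j\geqs 3}\sqrt{2\pi\eps/\lambda_j}$ in~\eqref{main1B}. I choose cut-offs $\delta_1 \asymp \sqrt{\eps\abs{\log\eps}/\abs{\lambda_1}}$ and $\delta_2 \asymp \sqrt{\eps\abs{\log\eps}/\max\set{\lambda_2,(2C_4\eps\abs{\log\eps})^{1/2}}}$, which interpolate between the Gaussian regime $\lambda_2\gg(\eps\abs{\log\eps})^{1/2}$ and the quartic regime $\lambda_2\ll(\eps\abs{\log\eps})^{1/2}$, so that $u_2(\pm\delta_2)$ is of order $\eps\abs{\log\eps}$ in both cases; the hatted cut-offs $\deltahat_1,\deltahat_2$ are taken of the same order. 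The hypotheses~\eqref{main1},~\eqref{main1A} and their counterparts for the lower bound are then routine to verify.

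Since $\delta_1$ lies many standard deviations from the origin, the denominator $\int_{-\delta_1}^{\delta_1}\e^{-\abs{\lambda_1}y_1^2/2\eps}\,\6y_1$ equals $\sqrt{2\pi\eps/\abs{\lambda_1}}$ up to super-polynomially small errors. For the numerator, the rescaling $y_2 = (2C_4)^{-1/4}x$ transforms $u_2(y_2)/\eps$ into $(x^4 + ax^2)/(2\eps)$ with $a = \lambda_2/\sqrt{2C_4}$ and produces a Jacobian factor $(2C_4)^{-1/4}$; because $\pm\delta_2(2C_4)^{1/4}$ is large in the natural scale of the rescaled quartic well, extending the integral to $\pm\infty$ costs only exponentially small errors, yielding $\int_{-\delta_2}^{\delta_2}\e^{-u_2(y_2)/\eps}\,\6y_2 = (2C_4)^{-1/4}\,I_{a,\eps}\brak{1+\Order{\eps^K}}$ for any $K$. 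Assembling these factors with the $\e^{-V(z)/\eps}$ arising from the shift back to the original coordinates at the saddle, and simplifying the powers of $\eps$ and $2\pi$, produces the leading order $\eps^{d/2-1/2}$ with the constant announced in~\eqref{proof1}.

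The hard part is verifying that the error is uniform in $\lambda_2\in[0,\Order{1}]$ and actually matches~\eqref{pitch10}. For $r=4$ the dominant contributions to $R_1$ and $R_3$ come from the terms $\eps^{-1}\delta_2^{5}$ and $\delta_2$; with the piecewise $\max$-choice of $\delta_2$ above, both reduce to $\sqrt{\eps\abs{\log\eps}^3/\max\set{\lambda_2,(\eps\abs{\log\eps})^{1/2}}}$, which is exactly $R_+(\eps,\lambda_2)$. The term $R_2 = \Order{\eps^{d/2+3/2}\delta_1^{-2}}$ is of lower order than the leading $\eps^{d/2-1/2}$ and is therefore negligible. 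Finally, the a priori bound on the equilibrium potential needed to apply Proposition~\ref{prop_main1-2} comes from the crude capacity estimate~\eqref{main_upper_1} with $c=0$ and $q=2$; in the present geometry it is straightforwardly sufficient to control $h^\star(\pm\deltahat_1,y_\perp)$ up to an $\eps^{1/2}$ error, as in the proof of Proposition~\ref{prop_main1-2}.
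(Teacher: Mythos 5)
Your proposal follows essentially the same route as the paper's proof: apply Propositions~\ref{prop_main1-1} and~\ref{prop_main1-2} with $r=4$, $q=2$, $u_1(y_1)=\frac12\abs{\lambda_1}y_1^2$, $u_2(y_2)=\frac12\lambda_2y_2^2+C_4y_2^4$, $c=0$, with $\delta_2$ (and $\deltahat_2$) interpolating between order $(\eps\abs{\log\eps}/\lambda_2)^{1/2}$ and order $(\eps\abs{\log\eps})^{1/4}$ at the crossover $\lambda_2\sim(\eps\abs{\log\eps})^{1/2}$, then evaluate the $y_1$-integral as a Gaussian and rescale the $y_2$-integral to obtain $I_{a,\eps}/(2C_4)^{1/4}$, checking that $\eps^{-1}\delta_2^5$, $\delta_2$ and $R_2$ are absorbed into $R_+(\eps,\lambda_2)$. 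This matches the paper's argument (the paper's exact quadratic-formula choice of $\delta_2^2$ is of the same order as your max-based choice), and the analysis is correct, apart from the harmless overstatement that cutting off the $y_2$-integral costs an error $\Order{\eps^K}$ for every $K$ (it is only polynomially small, but still negligible compared with $R_+$).
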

\begin{proof}
It suffices to apply Propositions~\ref{prop_main1-1} and~\ref{prop_main1-2} with
$r=4$, $q=2$, $u_{1}$ a quadratic function of $y_{1}$ and $u_{2}$ a polynomial
of degree $4$ in $y_{2}$ and $c=0$. We only need to
take some care in the choice of the $\delta_i$. The conditions yield 
$\delta_1=(2d\eps\abs{\log\eps}/\abs{\lambda_1})^{1/2}$ and 
\begin{equation}
\label{proof3}
\delta_2^2 =
\frac{-\lambda_2+\sqrt{\lambda_2^2+32dC_4\eps\abs{\log\eps}}}{4C_4}\;.
\end{equation}
For $\lambda_2>(\eps\abs{\log\eps})^{1/2}$, this implies that $\delta_2$ is of
order $(\eps\abs{\log\eps}/\lambda_2)^{1/2}$, while for
$0<\lambda_2\leqs (\eps\abs{\log\eps})^{1/2}$, it yields $\delta_2$ of order
$(\eps\abs{\log\eps})^{1/4}$. The expressions of $\deltahat_1$ and
$\deltahat_2$ are similar. This yields the stated error terms. The integral
over $y_1$ is carried out explicitly, while the integral over $y_2$ equals
$I_{a,\eps}/(2C_4)^{1/4}$, up to a negligible error term.
\end{proof}

\begin{prop}
\label{prop_proof2}
Under the assumptions of Theorem~\ref{thm_pitch1}, and for $\lambda_2<0$, 
\begin{equation}
\label{proof4}
\capacity_A(B) = \frac{J_{b,\eps}}{(2C_4)^{1/4}}
\sqrt{\frac{(2\pi)^{d-3}\abs{\mu_1}}{\mu_3\dots\mu_d}}
\eps^{d/2-1/2} \e^{-V(z_\pm)/\eps} \bigbrak{1+R_-(\eps)}\;,
\end{equation}
where $R_-(\eps)$ is defined in~\eqref{pitch10}, $J_{b,\eps}$ is the
integral
\begin{equation}
\label{proof5}
J_{b,\eps} = \int_{-\infty}^\infty \e^{-(x^2-b/4)^2/2\eps} \6x\;,
\end{equation}
and $b=\mu_2/\sqrt{2C_4}$.
\end{prop}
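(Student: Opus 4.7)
The plan is to adapt the argument of Proposition~\ref{prop_proof1} to the double-well setting, where the origin $z=0$ is no longer a saddle but the local maximum of $u_2(y_2)=\tfrac12\lambda_2 y_2^2+C_4 y_2^4$ sitting between the two true saddles $z_\pm$. The natural split, matching the two regimes in the error bound~\eqref{pitch10}, is at $\abs{\lambda_2}\asymp(\eps\abs{\log\eps})^{1/2}$.

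When $\abs{\lambda_2}\leqs(\eps\abs{\log\eps})^{1/2}$, the saddles $z_\pm$ sit within $O((\eps\abs{\log\eps})^{1/4})$ of the origin, and I would apply Propositions~\ref{prop_main1-1} and~\ref{prop_main1-2} in a single normal-form chart centred at $z=0$ with $r=4$, $q=2$, $u_1(y_1)=\tfrac12\abs{\lambda_1}y_1^2$, $u_2$ as above, and widths $\delta_1\asymp(\eps\abs{\log\eps}/\abs{\lambda_1})^{1/2}$, $\delta_2\asymp(\eps\abs{\log\eps})^{1/4}$ (and analogously for $\deltahat_1,\deltahat_2$). The condition $u_2\geqs-cd\eps\abs{\log\eps}$ on $\cB_{\delta_2}(0)$ is met with a bounded constant $c$ precisely because $\min u_2=-\lambda_2^2/(16C_4)=O(\eps\abs{\log\eps})$ in this range, while the outer growth hypothesis is automatic from the quartic leading term. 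For larger $\abs{\lambda_2}$ this single-chart approach breaks down, since no bounded $c$ can absorb $-\lambda_2^2/(16C_4)$; I would instead carry out the normal-form transformation separately in small neighbourhoods of the two saddles $z_\pm$, each now a non-degenerate quadratic saddle with eigenvalues $\mu_1,\dots,\mu_d$ from~\eqref{pitch03}, applying Propositions~\ref{prop_main1-1} and~\ref{prop_main1-2} with purely quadratic $u_1,u_2$. Since $G(A,B)=\set{z_-,z_+}$ consists of two disjoint saddles, the total capacity is the sum of the two Dirichlet-form contributions.

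Both regimes are brought into the single form~\eqref{proof4} by completing the square,
\begin{equation*}
\tfrac12\lambda_2 y_2^2+C_4 y_2^4=C_4\biggpar{y_2^2-\frac{\abs{\lambda_2}}{4C_4}}^2-\frac{\lambda_2^2}{16C_4},
\end{equation*}
and substituting $y_2=(2C_4)^{-1/4}x$, which yields
\begin{equation*}
\int \e^{-u_2(y_2)/\eps}\6y_2=(2C_4)^{-1/4}\,\e^{\lambda_2^2/(16C_4\eps)}\,J_{b,\eps},\qquad b=\frac{2\abs{\lambda_2}}{\sqrt{2C_4}}=\frac{\mu_2}{\sqrt{2C_4}}\bigbrak{1+O(\abs{\lambda_2}^{1/2})}.
\end{equation*}
The factor $\e^{\lambda_2^2/(16C_4\eps)}\e^{-V(z)/\eps}$ is then promoted to $\e^{-V(z_\pm)/\eps}$ by~\eqref{pitch04}, and in the large-$\abs{\lambda_2}$ regime the Laplace asymptotic $J_{b,\eps}\sim 2\sqrt{2\pi\eps/b}$ reproduces precisely twice the standard Eyring--Kramers prefactor $\sqrt{(2\pi\eps)^d\abs{\mu_1}/(\mu_2\mu_3\cdots\mu_d)}$ from the two-saddle sum, so both constructions agree to leading order with~\eqref{proof4}.

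The principal difficulty is controlling the $O(\abs{\lambda_2}^{5/2})$ correction from~\eqref{pitch04} inside the exponential: it is harmless only as long as $\abs{\lambda_2}^{5/2}\leqs\eps^{3/4}\abs{\log\eps}^{5/4}$, which is exactly why saddle-centred coordinates become mandatory for larger $\abs{\lambda_2}$, so that $V(z_\pm)$ appears without expansion. A careful bookkeeping of the normal-form Jacobians, of the eigenvalue replacements $\lambda_j\mapsto\mu_j$ for $j\neq 2$ dictated by~\eqref{pitch03}, and of the $O(\abs{\lambda_2}^{1/2})$ mismatch between $2\abs{\lambda_2}/\sqrt{2C_4}$ and $b=\mu_2/\sqrt{2C_4}$ inside $J_{b,\eps}$ is then needed to confirm the stated error $R_-(\eps)$ from~\eqref{pitch10}.
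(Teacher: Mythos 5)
Your split at $\abs{\lambda_2}\asymp(\eps\abs{\log\eps})^{1/2}$ and your treatment of the first regime coincide with the paper's proof, but in the second regime you take a genuinely different route, and that route has a gap. The paper never recentres the analysis at $z_\pm$ and never treats them as quadratic saddles: it stays in the single origin-centred chart, keeps $u_2(y_2)=\tfrac12\lambda_2y_2^2+C_4y_2^4$ as an exact quartic, and for $\mu_2\geqs(\eps\abs{\log\eps})^{1/2}$ merely splits the $y_2$-integration into the two half-lines, choosing $\delta_2$ (and $\deltahat_2$) of order $(\eps\abs{\log\eps}/\mu_2)^{1/2}$ around the minima $y_2=\pm(\mu_2/8C_4)^{1/2}$. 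Because the full quartic is retained in the integrals, the factor $J_{b,\eps}$ comes out exactly, with no quadratization error; the $\Order{\abs{\lambda_2}^{5/2}}$ issue you raise is handled by writing $u_2(y_2)=C_4\bigpar{y_2^2-\mu_2/8C_4}^2-\mu_2^2/64C_4+\Order{\abs{\lambda_2}^{3/2}y_2^2}$ and identifying the constant with $V(z_\pm)-V(z)$, not by abandoning the chart.

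Your alternative for $\abs{\lambda_2}>(\eps\abs{\log\eps})^{1/2}$ --- apply Propositions~\ref{prop_main1-1} and~\ref{prop_main1-2} at $z_\pm$ with purely quadratic $u_1,u_2$ and sum the two contributions --- is sound in spirit (cf.\ the paper's remark on gates with several isolated saddles) but cannot deliver \eqref{proof4} with the error \eqref{pitch10} uniformly. Expanding the quartic around $y_2=\pm(\mu_2/8C_4)^{1/2}$ leaves a cubic term with coefficient of order $(C_4\mu_2)^{1/2}$, so over a box of width $(\eps\abs{\log\eps}/\mu_2)^{1/2}$ the remainder you discard is of order $(\eps\abs{\log\eps})^{3/2}/\mu_2$, i.e.\ a relative error $\eps^{1/2}\abs{\log\eps}^{3/2}/\mu_2$; this exceeds the claimed bound $(\eps\abs{\log\eps}^3/\mu_2)^{1/2}$ for every $\mu_2=o(1)$ and does not even tend to zero in the window $(\eps\abs{\log\eps})^{1/2}\leqs\mu_2\leqs\eps^{1/2}\abs{\log\eps}^{3/2}$. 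Independently, what your construction produces is twice the quadratic Eyring--Kramers term, which differs from the $J_{b,\eps}$-based expression by a relative Laplace correction of order $\eps/\mu_2^2$, again larger than $R_-(\eps,\mu_2)$ whenever $\mu_2\lesssim\eps^{1/3}\abs{\log\eps}^{-1}$; so \lq\lq agreement to leading order\rq\rq\ is not enough, since the proposition asserts the formula containing the full integral $J_{b,\eps}$ with the specific remainder \eqref{pitch10}. The crossover window is therefore covered by neither of your two constructions; the missing idea is precisely the paper's: keep the quartic $u_2$ in the origin-centred normal form for all small $\lambda_2$ and adapt only the choice of $\delta_2,\deltahat_2$ (and the corresponding split of the $y_2$-integral), so that the non-Gaussian integral is never approximated.
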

\begin{proof}
First note that for small negative $\lambda_2$, 
\begin{equation}
\label{proof6:1}
u_2(y_2) = C_4 \biggpar{y_2^2 - \frac{\mu_2}{8C_4}}^2 - \frac{\mu_2^2}{64 C_4}
+ \Order{\abs{\lambda_2}^{3/2}y_2^2}\;,
\end{equation}
where the constant term corresponds to $V(z_\pm)-V(z)$ (recall $z=0$). The
situation is more
difficult than before, because $u_2$ is not increasing on $\R_+$. When
applying Proposition~\ref{prop_main1-1}, we distinguish two regimes.
\begin{itemiz}
\item	For $\mu_2 < (\eps\abs{\log\eps})^{1/2}$, it is sufficient to choose
$\delta_2$ of order $(\eps\abs{\log\eps})^{1/4}$.
\item	For $\mu_2 \geqs (\eps\abs{\log\eps})^{1/2}$, we cannot apply
Proposition~\ref{prop_main1-1} as is, but first split the integral over $y_2$
into the integrals over $\R_+$ and over $\R_-$. Each integral is in fact
dominated by the integral over an interval of order
$(\eps\abs{\log\eps}/\mu_2)^{1/2}$ around the minimum
\mbox{$y_2=\pm(\mu_2/8C_4)^{1/2}$},
so that one can choose $\delta_2$ of that order. 
\end{itemiz}
We make a similar distinction between regimes when choosing $\deltahat_2$ in
order to apply Proposition~\ref{prop_main1-2}. This yields the stated error
terms, and the integrals are treated as before.
\end{proof}

\begin{proof}[{\sc Proof of Theorem~\ref{thm_pitch1}}]
In order to complete the proof of Theorem~\ref{thm_pitch1}, it remains to
examine the integrals $I_{a,\eps}$ and $J_{b,\eps}$. First note that 
\begin{equation}
\label{proof7}
I_{a,\eps} 
= \sqrt{\frac{2\pi\eps^{1/2}}{1+\alpha}} \Psi_+(\alpha)\;,
\end{equation}
where $\alpha=a/\sqrt{\eps}$ and 
\begin{equation}
\label{proof8}
\Psi_+(\alpha) = \sqrt{\frac{1+\alpha}{2\pi}} 
\int_{-\infty}^{\infty} \e^{-(y^4+\alpha y^2)/2}\6y\;.
\end{equation}
The change of variables $y=z/\sqrt{1+\alpha}$ yields 
\begin{equation}
\label{proof9}
\Psi_+(\alpha) =\frac1{\sqrt{2\pi}} \int_{-\infty}^{\infty} 
\exp\biggset{-\frac12\biggbrak{\frac{z^4}{(1+\alpha)^2} + \frac{\alpha
z^2}{1+\alpha}}}\6z\;,
\end{equation}
which allows to show that $\Psi_+$ is bounded above and below by positive
constants, and to compute the limits as $\alpha\to0$ and $\alpha\to\infty$. The
expressions in terms of Bessel functions are obtained by observing that 
\begin{equation}
\label{proof10}
f(\delta) \defby
\int_{-\infty}^{\infty} 
\exp\biggset{-\frac12\biggbrak{y^4+2\delta y^2+\frac{\delta^2}{2}}}\6y
= \sqrt{\frac\delta2}K_{1/4}\biggpar{\frac{\delta^2}{4}}\;,
\end{equation}
because it satisfies the equation $f''(\delta)=(\delta^2/4)f(\delta)$.
The other integral is treated in a similar way.
\end{proof}

\begin{proof}[{\sc Proof of Theorem~\ref{thm_lpitch1}}]
For $\lambda_1<0$, the proof is analogous to the proof of
Theorem~\ref{thm_pitch1}, with the r\^oles of $y_1$ and $y_2$ interchanged. The
same applies for positive $\lambda_1$ up to order $\sqrt{\eps\abs{\log\eps}}$. 

For larger $\lambda_1$, Propositions~\ref{prop_proof1} and~\ref{prop_proof2}
have to be slightly adapted: 
\begin{itemiz}
\item	For the upper bound, we define a neighbourhood $C^+_\eps$ of $z_+$ and 
a neighbourhood $C^-_\eps$ of $z_-$ in the usual way. Instead of two regions
$D_\pm$, we construct three regions $D_-, D_0$ and $D_+$, intersecting
respectively $O_-, O_0$ and $O_+$ and contained in the corresponding basins of
attraction (see \figref{fig_longpitch}c). They are separated by layers
$S_\eps^\pm$. The function $h_+$ is then defined to be equal to $1$ in $D_-$,
to $1/2$ in $D_0$ and to $0$ in $D_+$. Inside the boxes $C^\pm_\eps$, $h_+$ is
constructed in a similar way as before, only with different boundary
conditions. This yields a factor $1/2$ in the capacity. 

\item	For the lower bound, we first construct boxes $\widehat
C^\pm_\eps$ around the saddles $z^\pm$ in the same way as before. Then we
connect $\widehat C^+_\eps$ and $\widehat C^-_\eps$ by a tube staying inside
$O_0$ (whose cross-section is of the same size as the sides of the boxes). One
can define coordinates $(y_1,\dots,y_d)$, given by the normal-form
transformation inside the boxes, and such that $y_1$ runs along the length of
the tube, in such a way that the Jacobian of the transformation $x\mapsto y$ is
close to $1$. Lemma~\ref{lem_apriori} is still applicable, and yields {\it a
priori\/}
bounds on the equilibrium potential on the sides of the boxes not touching the
tube (that is, contained in $O_\pm$). The Dirichlet form is then bounded below
by restricting the domain of integration to the union of the boxes and the
connecting tube. The remainder of the proof is similar, except that the range
of $y_1$ is larger. The value of the integral is dominated by the contributions
of the two boxes. 
\qed
\end{itemiz}
\renewcommand{\qed}{}
\end{proof}

\begin{proof}[{\sc Proof of Theorem~\ref{thm_bif1}}]
We first consider the case $\lambda_2\geqs0$. 
It suffices to apply Propositions~\ref{prop_main1-1} and~\ref{prop_main1-2},
taking some care in the choice of the $\delta_i$. The conditions yield 
$\delta_1=(2d\eps\abs{\log\eps}/\abs{\lambda_1})^{1/2}$ and 
\begin{equation}
\label{proof003}
\delta_2^2 =
\frac{-\lambda_2+\sqrt{\lambda_2^2+32dK_-\eps\abs{\log\eps}}}{4K_-}\;.
\end{equation}
For $\lambda_2>(\eps\abs{\log\eps})^{1/2}$, this implies that $\delta_2$ has
order $(\eps\abs{\log\eps}/\lambda_2)^{1/2}$, while for
$0\leqs\lambda_2\leqs(\eps\abs{\log\eps})^{1/2}$, it yields $\delta_2$ of order
$(\eps\abs{\log\eps})^{1/4}$. The expressions of $\deltahat_1$ and
$\deltahat_2$ are similar. This yields the stated error terms. The integral
over $y_1$ is carried out explicitly, while the integral over $y_2$ and $y_3$
gives, using polar coordinates,
\begin{align}
\nonumber
\int_{\cB_{\delta_2}(0)} \e^{-u_2(y_2,y_3)/\eps} \,\6y_2\6y_3 
&= \int_0^{2\pi}\int_0^{\delta_2} \e^{-(\lambda_2r^2+2k(\ph)r^4)/2\eps}
r\,\6r\6\ph \\
&= \int_0^{2\pi} \frac{\sqrt\eps}{\sqrt{2k(\ph)}}
\int_0^{(2k(\ph)/\eps)^{1/4}\delta_2} \e^{-(y^4+\alpha(\ph)y^2)/2}
y\,\6y\6\ph\;,
\label{proof006:0}
\end{align}
where $\alpha(\ph)=\lambda_2/\sqrt{2\eps k(\ph)}$. Using the fact that
\begin{equation}
\label{proof00}
\int_0^\infty \e^{-(y^2+d)^2/2} y\,\6y 
= \frac12 \int_d^\infty \e^{-z^2/2}\,\6z 
= \sqrt{\frac{\pi}{2}}\Phi(-d)\;,
\end{equation}
a straightforward computation shows that the integral over $y$ is
approximated by 
\begin{equation}
\label{proof006:A}
 \frac{\Theta_+(\alpha(\ph))}{1+\alpha(\ph)}\;.
\end{equation} 
For small negative $\lambda_2$, we can write 
\begin{equation}
\label{proof006:1}
u_2(y_2,y_3) = 
k(\ph) \biggpar{r^2-\frac{\abs{\lambda_2}}{4k(\ph)}}^2 -
\frac{\lambda_2^2}{16k(\ph)} 
\;,
\end{equation}
where the constant term corresponds to the actual minimum of the potential. 

If $-(\eps\abs{\log\eps})^{1/2}<\lambda_2<0$, it suffices to apply
Propositions~\ref{prop_main1-1} and~\ref{prop_main1-2} with $\delta_2$ of order
$(\eps\abs{\log\eps})^{1/4}$. 
\end{proof}

\begin{proof}[{\sc Proof of~\eqref{N5_9}}]
The main task is to compute the integral related to $u_{2}$, namely
\begin{equation}
\label{proofN3:1}
\cJ \defby
\int \e^{-[\lambda_2r^2+2\sum_{q=2}^M C_{2q}r^{2q}]/2\eps} 
\int_0^{2\pi}\e^{-D_{2M}r^{2M}\cos(2M\ph)/\eps}\,\6\ph 
 r\6r\;.
\end{equation}
We first carry out the integral over $\ph$, yielding 
\begin{equation}
 \label{proofN3:2}
\int_0^{2\pi} \e^{-D_{2M}r^{2M}\cos(2M\ph)/\eps} \,\6\ph 
=  2\pi I_0\biggpar{\frac{D_{2M}r^{2M}}{\eps}}\;,
\end{equation} 
where $I_0$ is the modified Bessel function 
\begin{equation}
\label{proofN3:3}
I_0(\alpha) = \frac1{2\pi} \int_0^{2\pi} \e^{\alpha\cos\ph}\,\6\ph\;. 
\end{equation}
The Laplace method shows that for large $\alpha$, $I_0(\alpha)$ behaves like
$\e^\alpha/\sqrt{2\pi\alpha}$. We thus introduce the bounded function 
\begin{equation}
 \label{proofN3:4}
\chi(\alpha) = 2\sqrt{1+\alpha} \e^{-\alpha}I_0(\alpha)
= \frac{\sqrt{1+\alpha}}{\pi} \int_0^{2\pi} \e^{-\alpha(1-\cos\ph)}\,\6\ph\;. 
\end{equation} 
Inserting in~\eqref{proofN3:1} and performing the change of variable 
$r^{2}=(\eps/2C_4)^{1/2}u$ yields 
\begin{equation}
 \label{proofN3:5}
\cJ = 
\frac{\pi}{\sqrt{8C_4}}
\int \e^{-[u^2-2(\abs{\lambda_2}/\sqrt{8 C_4})u + \Order{u^{3}}]/2\eps}
\frac{\chi(D_{2M}\eps^{-1}(u/(2 C_4)^{1/2})^{M})}
{\sqrt{1+D_{2M}\eps^{-1}(u/(2 C_4)^{1/2})^{M}}}
\,\6u\;. 
\end{equation} 
Applying the Laplace method shows that the integral is dominated by $u$ close to
$u^{\star}=\abs{\lambda_{2}}/(8 C_{4})^{1/2}$. Relating the obtained expression
to the eigenvalues at the new saddles via the relation
\begin{equation}
 \label{proofN3:6}
\frac{D_{2M}}{\eps} \Bigpar{\frac{u^\star}{(2C_{4})^{1/2}}}^{M}
= \frac{\mu_2\mu_3}{(2M)^2} \frac{1+\Order{\lambda_2}}{8\eps C_4} 
\end{equation} 
yields the necessary control on $\cJ$. The stated formula for the capacity
follows.
\end{proof}


\appendix

\section{Normal forms}
\label{app_nf}

\begin{proof}[{\sc Proof of Proposition~\ref{prop_scodim1}}]
Let us denote by $\cG_k(n,m)$ the vector space of functions
$g\colon\R^n\to\R^m$
which are homogeneous of degree $k$ (i.e., $g(tx)=t^kg(x)\;\forall x$). We
write the Taylor series of $V$ in the form 
\begin{equation}
\label{scod1-6:1}
V(x) = V_2(x) + V_3(x) + V_4(x) + \order{\Norm{x}^4}\;,
\end{equation}
where $V_k\in\cG_k(d,1)$ for $k=2,3,4$. We first look for a function
$g_{2}\in\cG_2(d,d)$ such that $V\circ [\id+g_2]$ contains as few terms of
order $3$ as possible. The Taylor series of $V\circ [\id+g_2]$ can be
written 
\begin{multline}
\label{scod1-6:2}
V(x+g_2(x)) \\=  V_2(x) 
+ \underbrace{\nabla V_2(x) \cdot g_2(x) + V_3(x)}_{\text{order $3$}} 
+ \underbrace{V_2(g_2(x)) +  \nabla V_3(x) \cdot g_2(x) +
V_4(x)}_{\text{order $4$}} 
+ \order{\Norm{x}^4}\;. 
\end{multline}
Now consider the so-called adjoint map $T\colon\cG_2(d,d) \to \cG_3(d,1)$,
$g_2\mapsto \nabla V_2(\cdot) \cdot g_2$, seen as a linear map between
vector spaces. All terms of $V_3(x)$ in the image of $T$ can be eliminated by a
suitable choice of $g_2$. Let $e_l$ denote the $l$th vector in the canonical
basis of $\R^d$. We see that 
\begin{equation}
\label{scod1-6:3}
T(x_jx_ke_l) = \lambda_l x_jx_kx_l \neq 0 
\qquad
\text{for $l=2,\dots,d$\;.}
\end{equation}
Thus all monomials except $x_1^3$ are in the image of $T$. Since $T$ involves
multiplication by $x_2$ or $x_3$ or $\dots$ or $x_d$, however, $x_1^3$ is not
in the image of $T$. Hence this term is resonant. We can thus choose $g_2$ in
such a way that 
\begin{equation}
\label{scod1-6:4}
V(x+g_2(x)) = V_2(x) 
+ \underbrace{V_{111}x_1^3}_{\text{order $3$}} 
+ \underbrace{V_2(g_2(x)) + \nabla V_3(x) \cdot g_2(x) +
V_4(x)}_{\text{order $4$}} 
+ \order{\Norm{x}^4}\;. 
\end{equation}
Now a completely analogous argument shows that we can construct a function
$g_3\in\cG_3(d,d)$ such that $V\circ [\id+g_2]\circ [\id+g_3]$ has some
constant times $x_1^4$ as the only term of order $4$. It remains to determine
this constant. From~\eqref{scod1-6:4} we deduce that it has the expression 
\begin{equation}
\label{scod1-6:5}
C_4 = 
\frac12\sum_{j=2}^d \lambda_j (g^j_{11})^2
+ \sum_{j=1}^d V_{11j} g^j_{11}
+ V_{1111}\;,
\end{equation}
where $g^j_{11}$ denotes the coefficient of $x_1^2e_j$ in $g_2$. The
expression of $T$ shows that necessarily $g^j_{11}=-V_{11j}/\lambda_j$ for
$j=2,\dots, d$, while we may choose $g^1_{11}=0$. This yields~\eqref{scod1-4}.
\end{proof}


\small

\bibliography{../../BFG}

\def\cprime{$'$}
\providecommand{\bysame}{\leavevmode\hbox to3em{\hrulefill}\thinspace}
\providecommand{\MR}{\relax\ifhmode\unskip\space\fi MR }
\providecommand{\MRhref}[2]{%
  \href{http://www.ams.org/mathscinet-getitem?mr=#1}{#2}
}
\providecommand{\href}[2]{#2}
\begin{thebibliography}{BEGK04}

\bibitem[BBM09]{BarretBovier2007}
Florent Barret, Anton Bovier, and Sylvie M\'el\'eard, \emph{Uniform estimates
  for metastable transition times in a coupled bistable system},
  arXiv:0907.0537, 2009.

\bibitem[BEGK04]{BEGK}
Anton Bovier, Michael Eckhoff, V{\'e}ronique Gayrard, and Markus Klein,
  \emph{Metastability in reversible diffusion processes. {I}. {S}harp
  asymptotics for capacities and exit times}, J. Eur. Math. Soc. (JEMS)
  \textbf{6} (2004), no.~4, 399--424.

\bibitem[BFG07a]{BFG06a}
Nils Berglund, Bastien Fernandez, and Barbara Gentz, \emph{Metastability in
  interacting nonlinear stochastic differential equations: {I}. {F}rom weak
  coupling to synchronization}, Nonlinearity \textbf{20} (2007), no.~11,
  2551--2581.

\bibitem[BFG07b]{BFG06b}
\bysame, \emph{Metastability in interacting nonlinear stochastic differential
  equations~{II}: {L}arge-${N}$ behaviour}, Nonlinearity \textbf{20} (2007),
  no.~11, 2583--2614.

\bibitem[BGK05]{BGK}
Anton Bovier, V\'eronique Gayrard, and Markus Klein, \emph{Metastability in
  reversible diffusion processes. {II}. {P}recise asymptotics for small
  eigenvalues}, J. Eur. Math. Soc. (JEMS) \textbf{7} (2005), no.~1, 69--99.

\bibitem[Eyr35]{Eyring}
H.~Eyring, \emph{The activated complex in chemical reactions}, Journal of
  Chemical Physics \textbf{3} (1935), 107--115.

\bibitem[FW98]{FW}
M.~I. Freidlin and A.~D. Wentzell, \emph{Random perturbations of dynamical
  systems}, second ed., Springer-Verlag, New York, 1998.

\bibitem[HKN04]{HelfferKleinNier04}
Bernard Helffer, Markus Klein, and Francis Nier, \emph{Quantitative analysis of
  metastability in reversible diffusion processes via a {W}itten complex
  approach}, Mat. Contemp. \textbf{26} (2004), 41--85.

\bibitem[HN05]{HelfferNier05}
Bernard Helffer and Francis Nier, \emph{Hypoelliptic estimates and spectral
  theory for {F}okker-{P}lanck operators and {W}itten {L}aplacians}, Lecture
  Notes in Mathematics, vol. 1862, Springer-Verlag, Berlin, 2005.

\bibitem[HS84]{HelfferSjostrand1}
B.~Helffer and J.~Sj{\"o}strand, \emph{Multiple wells in the semiclassical
  limit. {I}}, Comm. Partial Differential Equations \textbf{9} (1984), no.~4,
  337--408.

\bibitem[HS85a]{HelfferSjostrand3}
\bysame, \emph{Multiple wells in the semiclassical limit. {III}. {I}nteraction
  through nonresonant wells}, Math. Nachr. \textbf{124} (1985), 263--313.

\bibitem[HS85b]{HelfferSjostrand2}
\bysame, \emph{Puits multiples en limite semi-classique. {II}. {I}nteraction
  mol\'eculaire. {S}ym\'etries. {P}erturbation}, Ann. Inst. H. Poincar\'e Phys.
  Th\'eor. \textbf{42} (1985), no.~2, 127--212.

\bibitem[HS85c]{HelfferSjostrand4}
\bysame, \emph{Puits multiples en m\'ecanique semi-classique. {IV}. \'{E}tude
  du complexe de {W}itten}, Comm. Partial Differential Equations \textbf{10}
  (1985), no.~3, 245--340.

\bibitem[Kol00]{Kolokoltsov00}
Vassili~N. Kolokoltsov, \emph{Semiclassical analysis for diffusions and
  stochastic processes}, Lecture Notes in Mathematics, vol. 1724,
  Springer-Verlag, Berlin, 2000.

\bibitem[Kra40]{Kramers}
H.~A. Kramers, \emph{Brownian motion in a field of force and the diffusion
  model of chemical reactions}, Physica \textbf{7} (1940), 284--304.

\bibitem[Ste05]{Stein04}
D.~L. Stein, \emph{Large fluctuations, classical activation, quantum tunneling,
  and phase transitions}, Braz. J. Phys. \textbf{35} (2005), 242--252.

\bibitem[VF69]{VF69}
A.~D. Ventcel{\cprime} and M.~I. Fre\u{\i}dlin, \emph{Small random
  perturbations of a dynamical system with stable equilibrium position}, Dokl.
  Akad. Nauk SSSR \textbf{187} (1969), 506--509.

\bibitem[VF70]{VF70}
\bysame, \emph{Small random perturbations of dynamical systems}, Uspehi Mat.
  Nauk \textbf{25} (1970), no.~1 (151), 3--55.

\end{thebibliography}
\bibliographystyle{amsalpha}               

\goodbreak
\bigskip\bigskip\noindent
{\small 
Nils Berglund \\ 
Universit\'e d'Orl\'eans, Laboratoire {\sc Mapmo} \\
{\sc CNRS, UMR 6628} \\
F\'ed\'eration Denis Poisson, FR 2964 \\
B\^atiment de Math\'ematiques, B.P. 6759\\
45067~Orl\'eans Cedex 2, France \\
{\it E-mail address: }{\tt nils.berglund@univ-orleans.fr}

\bigskip\noindent
Barbara Gentz \\ 
Faculty of Mathematics, University of Bielefeld \\
P.O. Box 10 01 31, 33501~Bielefeld, Germany \\
{\it E-mail address: }{\tt gentz@math.uni-bielefeld.de}

}


\end{document}